\documentclass[10pt]{article}
\usepackage{amsmath, amssymb, amsfonts, amsthm, epsfig, float, graphicx, sectsty, stix}
\usepackage[all]{xy}
\pagestyle{empty}
\title{Quantifying lawlessness in finitely generated groups}
\author{Henry Bradford}

\newtheorem{thm}{Theorem}[section]
\newtheorem{lem}[thm]{Lemma}
\newtheorem{propn}[thm]{Proposition}
\newtheorem{coroll}[thm]{Corollary}
\newtheorem{defn}[thm]{Definition}
\newtheorem{ex}[thm]{Example}
\newtheorem{notn}[thm]{Notation}

\newtheorem{rmrk}[thm]{Remark}
\newtheorem{qu}[thm]{Question}
\newtheorem{prob}[thm]{Problem}

\DeclareMathOperator{\Aut}{Aut}

\DeclareMathOperator{\Homeo}{Homeo}

\DeclareMathOperator{\im}{im}
\DeclareMathOperator{\rk}{rk}
\DeclareMathOperator{\rst}{top}

\DeclareMathOperator{\Stab}{Stab}

\DeclareMathOperator{\supp}{supp}
\DeclareMathOperator{\Sym}{Sym}
\DeclareMathOperator{\Wr}{Wr}

\begin{document}

\maketitle

\begin{abstract}
We introduce a quantitative 
notion of lawlessness for finitely generated groups, 
encoded by the \emph{lawlessness growth function} 
$\mathcal{A}_{\Gamma} : \mathbb{N} \rightarrow \mathbb{N}$. 
We show that $\mathcal{A}_{\Gamma}$ is bounded iff 
$\Gamma$ has a nonabelian free subgroup. 
By contrast we construct, 
for any nondecreasing unbounded function $f: \mathbb{N} \rightarrow \mathbb{N}$, 
an elementary amenable lawless groups for which 
$\mathcal{A}_{\Gamma}$ grows more slowly that $f$. 
We produce torsion lawless groups for which $\mathcal{A}_{\Gamma}$ 
is at least linear using Golod-Shafarevich theory, 
and give some upper bounds on $\mathcal{A}_{\Gamma}$ 
for Grigorchuk's group and Thompson's group $\mathbf{F}$. 
We note some connections between $\mathcal{A}_{\Gamma}$ 
and quantitative versions of residual finiteness. 
Finally, we also describe a function $\mathcal{M}_{\Gamma}$ quantifying the property 
of $\Gamma$ having no mixed identities, 
and give bounds for nonabelian free groups. 
By contrast with $\mathcal{A}_{\Gamma}$, 
there are \emph{no} groups for which $\mathcal{M}_{\Gamma}$ is bounded: 
we prove a universal lower bound on $\mathcal{M}_{\Gamma}(n)$ 
of the order of $\log (n)$. 
\end{abstract}

\section{Introduction}

A \emph{law} for a group $\Gamma$ is a non-trivial 
word-map which vanishes identically on $\Gamma$. 
$\Gamma$ is \emph{lawless} if it has no laws. 
The goal of this article is to introduce and study a 
quantitative notion of lawlessness. 

\subsection{Statement of results}

Throughout, $\Gamma$ is a group generated by a finite set $S$. 
Given a nontrivial reduced word $w$ which is not a law for $\Gamma$, 
the \emph{complexity} of $w$ in $\Gamma$ is the minimal 
length of a tuple in $\Gamma$ not evaluating to the identity under $w$ 
(where the length of a tuple of elements of $\Gamma$ 
is the sum of the lengths of those elements 
in the word-metric induced by $S$ on $\Gamma$). 
The \emph{lawlessness growth function} 
$\mathcal{A}_{\Gamma} ^S : \mathbb{N} \rightarrow \mathbb{N}$ 
of $\Gamma$ then sends $n \in \mathbb{N}$ to the maximal complexity 
occurring among the words of length at most $n$. 
It is easy to see that if $\Gamma$ has a nonabelian free subgroup, 
then $\Gamma$ is lawless and $\mathcal{A}_{\Gamma} ^S$ 
is a bounded function. 
Our first observation is that this is the only way 
that bounded lawlessness growth can arise. 

\begin{thm} \label{bddLGthm}
Suppose $\Gamma$ is lawless. 
Then $\Gamma$ has a nonabelian free subgroup iff $\mathcal{A}_{\Gamma} ^S$ 
is bounded. 
\end{thm}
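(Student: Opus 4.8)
The forward implication is the easy observation recorded above: a rank‑two free subgroup $\langle a,b\rangle\le\Gamma$ witnesses every nontrivial reduced word at once, since $w(a,b)\ne 1$ for all such $w$, so each word has complexity at most $|a|_S+|b|_S$ and $\mathcal{A}_{\Gamma}^S$ is bounded. So I concentrate on the converse. Assume $\Gamma$ is lawless and $\mathcal{A}_{\Gamma}^S\le C$. The plan is to convert the uniform bound $C$ into a \emph{finite} family of test tuples witnessing every nontrivial word, and then to recover a free subgroup of $\Gamma$ from the resulting embedding into a finite power of $\Gamma$. Let $B\subseteq\Gamma\times\Gamma$ be the set of pairs of total word‑length at most $C$; it is finite because $\Gamma$ is finitely generated, say $B=\{T_1,\dots,T_N\}$. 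For any nontrivial reduced word $w$ in two variables, $w$ is not a law, so its complexity --- defined since $\Gamma$ is lawless --- is at most $C$; hence $w$ fails to vanish on some tuple of total length $\le C$, i.e.\ on some $T_i$. Therefore the homomorphism
\[
\Phi\colon F_2=\langle x,y\rangle\longrightarrow\Gamma^N,\qquad w\longmapsto\bigl(w(T_1),\dots,w(T_N)\bigr),
\]
where $w(T_i)$ is the value of $w$ after substituting $x,y$ by the two coordinates of $T_i$, has trivial kernel, so $F_2$ embeds into $\Gamma^N$. (The same argument works with ``two variables'' replaced by any fixed number of variables, using tuples in $\Gamma^k$; I phrase it for rank two.)

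It remains to descend from $\Gamma^N$ to $\Gamma$. For this I would use the elementary fact that in a nonabelian free group $F$, any finite collection of nontrivial normal subgroups has nontrivial common intersection. Applying this to $N_i=\ker(\pi_i\circ\Phi)$, with $\pi_i\colon\Gamma^N\to\Gamma$ the $i$‑th projection: since $\bigcap_i N_i=\ker\Phi=\{1\}$, some $N_i$ is already trivial, so $\pi_i\circ\Phi\colon F_2\to\Gamma$ is injective and its image is the desired nonabelian free subgroup. The quoted fact reduces by a short induction to two normal subgroups $M,N\trianglelefteq F$: if $M\cap N=\{1\}$ then $[M,N]=\{1\}$, so any $1\ne m\in M$ has $N\le C_F(m)$; but centralisers of nontrivial elements of free groups are infinite cyclic, so $N$ would be a nontrivial cyclic normal subgroup of $F$ --- impossible, since a nonabelian free group has no such subgroup.

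The step I expect to carry the real content is this descent. Extracting $B$ and reducing complexity to the evaluation of finitely many words is immediate from the definitions and finite generation; but the implication ``$H$ embeds in $\Gamma^N$ $\Rightarrow$ $H$ embeds in $\Gamma$'' is special to nonabelian free groups --- it already fails for $H=\mathbb{Z}^2$ --- so the normal‑subgroup lemma must be invoked and applied with some care. Finally, for completeness I would record that the dichotomy is independent of $S$: passing to another finite generating set distorts the word metric, and hence all complexities and hence $\mathcal{A}_{\Gamma}^S$, by at most a bounded multiplicative factor, so boundedness of $\mathcal{A}_{\Gamma}^S$ is an invariant of $\Gamma$.
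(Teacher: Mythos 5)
Your proof is correct, and it reaches the conclusion by a somewhat different route than the paper. The forward direction is identical. For the converse, the paper also begins with the finite family of witnessing tuples supplied by boundedness, but it then takes such a family of \emph{minimal} size $r$, notes $r \geq 2$ (else the first pair already generates $F_2$), uses minimality to produce a nontrivial word $w_r$ vanishing on all but one pair, and shows that the nontrivial normal subgroup $N \trianglelefteq F_2$ of words vanishing at a fixed pair must lie in $C_{F_2}(w_r)$, because otherwise a commutator $[w_r,v]$ would vanish on the whole family and hence be a law; the contradiction then comes from Lemmas \ref{freenrmllem} and \ref{cycClem} (nontrivial normal subgroups of a nonabelian free group are not cyclic, centralizers of nontrivial elements are cyclic). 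You instead package the witnessing family as an injective homomorphism $\Phi\colon F_2 \to \Gamma^N$ and invoke the fact that finitely many nontrivial normal subgroups of a nonabelian free group have nontrivial intersection, so some coordinate projection $\pi_i \circ \Phi$ is already injective; your proof of that fact ($M \cap N = 1 \Rightarrow [M,N]=1$, then cyclic centralizers plus the absence of nontrivial cyclic normal subgroups) rests on exactly the same two free-group lemmas, so the two arguments share their core. What your packaging buys is a clean, reusable reduction --- a nonabelian free group embedding in a finite direct power of $\Gamma$ embeds in $\Gamma$ itself --- with a transparent induction and no minimality bookkeeping; the paper's argument is more hands-on, exhibiting the specific word $w_r$ and pair where freeness fails, and never passes through $\Gamma^N$. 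Your closing remark on independence of the generating set is not needed for the statement as given (which fixes $S$), but it is harmless.
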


Theorem \ref{bddLGthm} immediately begs the question of 
how slowly $\mathcal{A}_{\Gamma} ^S$ can grow for 
a lawless finitely generated group which contains no 
nonabelian free subgroup. 
We give a satisfying answer to this question, 
constructing examples where the growth can be as slow as desired. 

\begin{thm} \label{EEffthm}
Let $f : \mathbb{N} \rightarrow \mathbb{N}$ be an unbounded nondecreasing function 
with $f(1) \geq 2$. 
There exists an elementary amenable lawless group $\Gamma$, 
generated by a finite set $S$ such that for all $n\in\mathbb{N}$, 
$\mathcal{A}_{\Gamma} ^S (n) \leq f (n)$. 
\end{thm}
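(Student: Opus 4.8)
The plan is to build, for the given $f$, a single finitely generated group $\Gamma=\Gamma_{f}$ realising all three properties at once, and to read them all off one feature of the construction. Note first that by Theorem~\ref{bddLGthm} any elementary amenable group — having no nonabelian free subgroup — automatically has $\mathcal{A}_{\Gamma}^{S}$ unbounded, so the content is entirely the upper bound. The mechanism I would use for the upper bound is a \emph{retraction trick}: given a nontrivial reduced word $w$ in variables $x_{1},\dots,x_{k}$, consider the finitely many homomorphisms $r\colon F_{k}\to F_{3}=\langle c_{1},c_{2},c_{3}\rangle$ sending each $x_{i}$ to one of $c_{1},c_{2},c_{3}$; then $r(w)$ is a reduced word of length at most $|w|$ in $c_{1},c_{2},c_{3}$, and for a suitable $r$ it is nontrivial in $F_{3}$ (a short lemma about free groups, of the kind one expects already in the proof of Theorem~\ref{bddLGthm}). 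Hence if $\Gamma$ contains elements $a_{1},a_{2},a_{3}$ of $S$-length at most $C$ such that no nontrivial reduced word of length at most $N$ in the $a_{j}$ evaluates to the identity in $\Gamma$ — i.e.\ the $a_{j}$ generate a subgroup of ``girth'' exceeding $N$ — then every non-law of length at most $N$ is witnessed by a tuple whose entries all lie in $\{a_{1}^{\pm1},a_{2}^{\pm1},a_{3}^{\pm1}\}$, so has complexity controlled by $C$ alone.

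So it suffices to produce a finitely generated, elementary amenable, lawless $\Gamma$ which is ``approximately free of rank $3$ at every scale'': there are sequences $C_{1}<C_{2}<\cdots$ and $N_{1}<N_{2}<\cdots$ such that the ball of radius $C_{k}$ in $\Gamma$ contains a $3$-element set of girth greater than $N_{k}$. For such a group, lawlessness is automatic (every nontrivial reduced word has length $\le N_{k}$ for some $k$, hence is a non-law), and the upper bound follows by bookkeeping: choose the thresholds so that $f$ has already overtaken $C_{k}$ by the time the input reaches $N_{k-1}+1$, which is possible because $f$ is unbounded and nondecreasing; then $\mathcal{A}_{\Gamma}^{S}(n)$ is at most a bounded function of $C_{k}$, hence at most $f(n)$, for $N_{k-1}<n\le N_{k}$, with the small values of $n$ (starting from $\mathcal{A}_{\Gamma}^{S}(1)=1$) absorbed by $f(1)\ge 2$ and a careful choice of $C_{1}$.

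Building such a $\Gamma$ is the real work. These high-girth triples cannot live in finite quotients: any non-free finitely generated group — in particular any finitely generated elementary amenable group — has a shortest nontrivial relation of some fixed length, and that relation dies in every quotient, so all finite quotients have uniformly bounded girth. The high-girth triples must therefore occur as subgroups, and for one finitely generated group to contain them at unbounded scales the group has to be self-referential. The natural candidate is a group of automorphisms of a spherically homogeneous rooted tree with enormous, $f$-dependent branching indices, generated by finitely many automorphisms defined by a recursion down the tree: the huge branching makes a $3$-generated subgroup of girth $>N_{k}$ visible already at radius $C_{k}$ near the root, while the recursion is what keeps $\Gamma$ finitely generated. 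The point needing the most care is elementary amenability — recall that self-similar groups with innocuous local actions, such as Grigorchuk's group, need not be elementary amenable — so the recursion and the local actions must be chosen so that $\Gamma$ is assembled from finite and abelian groups using only the $\mathrm{EA}$-preserving operations (directed unions, extensions); a natural target is for $\Gamma$ to be an extension of a finitely generated elementary amenable group by a locally finite (or suitably controlled locally nilpotent) normal subgroup, arranged so the needed high-girth sections remain cheap to reach. Verifying elementary amenability then reduces to checking the recursion only ever glues together tame pieces.

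The main obstacle is exactly this reconciliation. Lawlessness forces increasingly free-like behaviour (subgroups of unbounded girth); elementary amenability forbids genuinely free subgroups and keeps $\Gamma$ tame; finite generation rules out the naive fix of taking a direct sum of the high-girth pieces; and the target bound $\mathcal{A}_{\Gamma}^{S}\le f$ forces those pieces to be reachable from $S$ at a radius growing more slowly than the essentially arbitrary $f$. The delicate steps I expect to dominate the proof are: (i) choosing the branching data and the recursion so that the $3$-generated subgroup found at radius $C_{k}$ genuinely has girth $>N_{k}$, with $N_{k}$ unboundedly large compared with $C_{k}$; (ii) proving elementary amenability, the step most sensitive to the choice of local actions; and (iii) interleaving the two sequences $(C_{k})$ and $(N_{k})$ against $f$ so that $\mathcal{A}_{\Gamma}^{S}(n)\le f(n)$ holds for \emph{every} $n$, not merely asymptotically.
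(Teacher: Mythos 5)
There is a genuine gap, in two places. First, your ``retraction trick'' is false as stated for $k\geq 4$: it is not true that every nontrivial reduced $w\in F_k$ survives some substitution sending each variable to one of three fixed free generators. Indeed, the paper's own Proposition \ref{KozThoProp} produces a counterexample: applying it to the nontrivial words $x_ix_j^{-1}$ ($1\leq i<j\leq 4$) gives a nontrivial $w\in F_4$ whose vanishing set in \emph{every} group contains all tuples with a repeated coordinate; since any map of four variables into $\{c_1,c_2,c_3\}$ repeats a value, all $3^4$ of your retractions kill $w$. This particular defect is repairable — fix the rank $k$ once and for all (as the paper does, via Remark \ref{freerkrmrk}) and ask for $k$-tuples of large girth rather than reducing everything to triples — so the real reduction you want is exactly Corollary \ref{KozThoCoroll}: in a lawless group there are $k$-tuples at which all words of length at most $l$ are simultaneously nonvanishing.

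The more serious gap is that the construction, which is the entire content of the theorem, is never carried out: you only sketch a self-similar/branch-type group on a spherically homogeneous tree and yourself list as open the three decisive points (producing the high-girth tuples at radius $C_k$, proving elementary amenability, and the interleaving against $f$). That route is not known to work and is likely harder than the theorem itself — recursively defined tree automorphism groups of this kind (e.g.\ Grigorchuk's group) are typically \emph{not} elementary amenable, and nothing in your sketch controls this. The paper sidesteps the tension completely: it fixes one lawless elementary amenable (not finitely generated) group $\Delta$, e.g.\ $\bigoplus_n\Sym(n)$, takes for each $l$ a witness pair $(g_l,h_l)$ from Corollary \ref{KozThoCoroll}, and plants these as two elements $\hat g,\hat h$ of the unrestricted wreath product $\Delta\Wr\mathbb{Z}$ with supports along a Sidon-like pair of sparse sequences $p,q$ (Lemma \ref{SparseFnctLem}), setting $\Gamma=\langle\hat g,\hat h,t\rangle$. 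Elementary amenability then follows because distinct conjugates of the generators have essentially disjoint supports, so the relevant commutator subgroup lands in $\bigoplus_{\mathbb{Z}}\Delta$; and the bound $\mathcal{A}_\Gamma^S(n)\leq f(n)$ follows because the single shift $\hat g^{\,t^{q(m)-p(m)}}$ aligns $g_{L(m)}$ with $h_{L(m)}$, giving a witness of length about $2(q(m)-p(m))$, with $L$ chosen to grow fast enough relative to $f$. Your proposal identifies the right shape of the problem (cheap high-girth tuples at every scale, interleaved against $f$) but supplies neither a group nor proofs of the two properties that make the theorem nontrivial.
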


The groups $\Gamma$ can all be built out of a fixed lawless 
elementary amenable group, using a wreath product construction. 
Being amenable, $\Gamma$ contains no nonabelian free subgroup, 
so that by Theorem \ref{bddLGthm} $\mathcal{A}_{\Gamma} ^S$ is unbounded. 
Next we turn to groups of fast lawlessness growth. 

\begin{thm} \label{GSmainthm}
There exists a lawless finitely generated group $\Gamma$ 
with $\mathcal{A}_{\Gamma} ^S (n) \gg n$. 
\end{thm}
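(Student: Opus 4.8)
The plan is to produce a finitely generated torsion group $\Gamma$ which is lawless with $\mathcal{A}_\Gamma^S(n) \gg n$, using Golod--Shafarevich theory to guarantee infiniteness and flexibility of presentations, and to control lawlessness quantitatively by a careful choice of relators. The strategy is to build $\Gamma$ as a Golod--Shafarevich group over a field (say $\mathbb{F}_p$): one starts with a free group $F$ on $d$ generators and imposes a recursively defined set of relators of controlled degrees so that the associated graded algebra remains infinite-dimensional (the Golod--Shafarevich inequality). The freedom in choosing the relators is what we exploit: at each stage we may avoid killing any particular word-map nontrivially, i.e. we can arrange that for every short word $w$ there remains, in the quotient, a tuple of elements witnessing $w \neq 1$, while simultaneously ensuring that witnessing tuples for words of length $\leq n$ are forced to have large word-length.

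The key steps, in order, are as follows. First, recall the Golod--Shafarevich construction and fix parameters: a presentation $\langle x_1, \dots, x_d \mid r_1, r_2, \dots \rangle$ with $r_i$ of degree $d_i$, chosen so that $\sum_i t^{d_i} < (1 - dt + \text{(something)})$-type inequality holds for some $t \in (0,1)$, guaranteeing the group is infinite, and in fact infinite torsion if we additionally throw in enough $p$-power relators in the pro-$p$ setting. Second, enumerate all reduced words $w_1, w_2, \dots$ in the free group $F_\infty$ on countably many letters (the potential laws); lawlessness means each $w_j$ must fail to be a law, so we need, for each $j$, some tuple $\bar\gamma^{(j)}$ in $\Gamma$ with $w_j(\bar\gamma^{(j)}) \neq 1$. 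Third — and this is the heart — interleave the relator-choices with "diagonalisation against laws": at stage $j$ we pick, inside the current quotient, an explicit tuple $\bar\gamma^{(j)}$ witnessing $w_j \neq 1$, then continue adding relators only of very high degree, high enough that they cannot interfere with the finitely many witnessing relations already committed to, and simultaneously high enough that any witnessing tuple for a word of length $\leq n$ must involve group elements of length $\gg n$ (this is where the lower bound $\mathcal{A}_\Gamma^S(n) \gg n$ comes from). Finally, verify that the Golod--Shafarevich inequality survives all these choices — since we only ever add relators of large degree, their contribution $\sum t^{d_i}$ can be made as small as we like, so infiniteness is preserved — and conclude that $\Gamma$ is an infinite, finitely generated, lawless group with at-least-linear lawlessness growth.

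The main obstacle I expect is the simultaneous control of two opposing demands: to witness $w_j \neq 1$ we want to \emph{not} impose relations, but to force $\mathcal{A}_\Gamma^S(n) \gg n$ we want short words to have \emph{only} long witnessing tuples, which morally requires many relations killing the short tuples. Reconciling these requires a quantitative argument: one must show that for each fixed short word $w$, \emph{some} tuple of bounded-but-growing length survives, and the bound on "how long you must go" grows at least linearly in $|w|$. The cleanest way to arrange this is probably to make the construction "layered" so that the $n$-th layer of generators only starts to interact with laws at scale $\approx n$, perhaps by using an infinitely-generated-then-truncated Golod--Shafarevich algebra, or by a direct-limit/inverse-limit argument where the degree of the relators grows fast enough. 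A secondary technical point is ensuring torsion (if that is wanted) without destroying the degree control — this is standard in the pro-$p$ Golod--Shafarevich literature (Golod's original groups, or the Grigorchuk--Zelmanov-style refinements) but needs to be checked against the interleaving. Verifying the Golod--Shafarevich inequality at the end is routine once the relator degrees are pinned down, so I do not anticipate difficulty there.
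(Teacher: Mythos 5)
Your general framework is the right one (a Golod--Shafarevich presentation over $\mathbb{F}_p$ with $p$-power relators, keeping the contribution $\sum_r \tau^{D(r)}$ small by degree control), but there is a genuine gap at the heart of the argument: you never give a concrete mechanism that forces short words to have only long witnessing tuples. Your proposal says that adding relators ``of very high degree'' is ``high enough that any witnessing tuple for a word of length $\leq n$ must involve group elements of length $\gg n$'', but high-degree relators by themselves do nothing of the sort --- to deny a short word all short witnesses you must impose relations that actually kill $w(\mathbf{g})$ for \emph{every} short tuple $\mathbf{g}$, and you acknowledge this tension in your final paragraph without resolving it (the ``layered''/``perhaps'' suggestions are not an argument). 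The paper's resolution is to restrict attention to the power words $W=\{x^{p^k}\}$ and to control the \emph{torsion growth}: enumerate all elements $w_n$ of the free group by length and impose the relators $r_n=w_n^{p^{m+m_0}}$ for $w_n$ of length about $cp^m$, so that every element of length $\leq n$ has order at most $Cn$. Then the single word $x^{p^k}$ with $p^k\approx Cn$ vanishes on all tuples of length $\leq n$, yet is not a law (the group has infinite exponent, being lawless), so its complexity exceeds $n$ while its length is about $Cn$; this is exactly the linear lower bound, and it is compatible with Golod--Shafarevich because $D(w^{p^j})=p^jD(w)$ makes these forced relators automatically of huge degree relative to their number ($\leq |B(cp^m)|$), so the series $\sum_r\tau^{D(r)}$ can be made as small as desired by choosing $m_0$ large. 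In your sketch, torsion is treated as an optional ``secondary technical point'', whereas it is the very engine of the lower bound.

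A lesser but real issue is that your ``diagonalisation against laws'' is both unnecessary and underjustified. Lawlessness of a Golod--Shafarevich group is automatic: by Zelmanov's theorem the pro-$p$ completion contains a nonabelian free subgroup, so no nontrivial word is a law for the completion, hence none is a law for $\Gamma$ (pass through a finite $p$-quotient separating the witness from the identity, as in the paper's Corollary on G--S lawlessness). So you need not interleave witness-choosing stages with relator-choosing stages at all; and if you did, committing to a specific witnessing tuple and claiming later high-degree relators ``cannot interfere'' with it requires an argument about degrees in the Zassenhaus filtration that you do not supply. As written, the proposal identifies the correct toolbox but is missing the key quantitative idea (power words plus linear torsion growth) that converts the Golod--Shafarevich flexibility into the bound $\mathcal{A}_{\Gamma}^S(n)\gg n$.
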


The group $\Gamma$ is constructed using Golod-Shafarevich theory: 
it has long been known that G-S theory enables the construction of infinite 
(indeed lawless) finitely generated $p$-torsion groups. 
We run this argument in an effective manner: 
ensuring that the orders of elements grow as slowly as possible 
as a function of word-length. 

We then turn to the problem of estimating $\mathcal{A}_{\Gamma} ^S$ 
for specific well-known examples of lawless groups without 
nonabelian free subgroups. 
The first example we investigate is 
Grigorchuk's (first) group of automorphisms of the binary rooted tree. 
This is a torsion group (so has no nonabelian free subgroup) 
and has every finite $2$-group as a subgroup (so has no laws). 

\begin{thm} \label{Grigmainthm}
Let $\Gamma$ be the first Grigorchuk group. 
Then there exists $C>0$ such that: 
\begin{equation}
n^{2/3} \ll \mathcal{A}_{\Gamma} ^S (n) \ll \exp (Cn)\text{.}
\end{equation}
\end{thm}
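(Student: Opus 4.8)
The plan is to handle the two bounds separately.

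\emph{Lower bound.} Here I would use torsion. Let $\mathrm{ord}_\Gamma(L)$ be the largest order of an element of $\Gamma$ of $S$-length at most $L$; it is a power of $2$, nondecreasing in $L$, and (as $\Gamma$ is an infinite $2$-group) unbounded. Write $L_k$ for the least $S$-length of an element of order $2^k$. The word $w=x^{2^k}$ has length $2^k$; it is not a law (there are elements of order $>2^k$), and it kills exactly the elements of order dividing $2^k$, so its complexity in $\Gamma$ is the least $S$-length of an element of order $\geq 2^{k+1}$, namely $L_{k+1}$. Hence $\mathcal{A}_\Gamma^S(2^k)\geq L_{k+1}$. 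The structural input needed is then the estimate $\mathrm{ord}_\Gamma(L)\ll L^{3/2}$ — equivalently $L_k\gg 2^{2k/3}$ — for the first Grigorchuk group; one proves this by the usual length‑reduction argument on the tree (passing to sections contracts lengths by a definite factor after boundedly many levels) together with the description of the order of an element via the orders of its sections and the depth of its portrait. The exponent $2/3$ is the reciprocal of the $3/2$ occurring here: $\mathcal{A}_\Gamma^S(2^k)\geq L_{k+1}\gg 2^{2(k+1)/3}\gg (2^k)^{2/3}$, and monotonicity of $\mathcal{A}_\Gamma^S$ gives $\mathcal{A}_\Gamma^S(n)\gg n^{2/3}$ for all $n$.

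\emph{Upper bound.} Here I would use that $\Gamma$ is residually finite via its congruence quotients $\Gamma_m:=\Gamma/\Stab_\Gamma(m)$, finite $2$‑groups with $\bigcap_m\Stab_\Gamma(m)=1$. Given a non‑law $w$ of length $n$ in variables $x_1,\dots,x_k$ with $k\leq n$, let $m_0(w)$ be the least $m$ with $w$ not an identity of $\Gamma_m$, and pick $\bar\gamma\in\Gamma^k$ with $w(\bar\gamma)\notin\Stab_\Gamma(m_0(w))$. Replacing each $\gamma_i$ by a shortest preimage of the coset $\gamma_i\Stab_\Gamma(m_0(w))$ yields, by normality of $\Stab_\Gamma(m_0(w))$, a tuple still witnessing $w\neq 1$ of total length at most $k\cdot\diam(\Gamma_{m_0(w)})\leq n\cdot\diam(\Gamma_{m_0(w)})$. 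Since $\diam(\Gamma_m)\ll 2^{O(m)}$ — extracted from the self‑similar structure by lifting words one generator at a time, with bounded distortion per level — one obtains, taking the maximum over non‑laws $w$ of length $\leq n$,
\begin{equation}
\mathcal{A}_\Gamma^S(n)\ll n\cdot 2^{O(\max\{m_0(w)\,:\,|w|\leq n\})}\text{.}
\end{equation}

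So the whole upper bound reduces to showing $m_0(w)=O(n)$, and this I expect to be the main obstacle: equivalently, one must show that a word of length $n$ which is an identity of $\Gamma_{Cn}$ for a suitable absolute constant $C$ is already a law of $\Gamma$. The plan is a descent on the tree. Supposing $w$ is an identity of $\Gamma_M$ but not of $\Gamma$, restrict the variables to elements of $\Gamma_M$ fixing level $1$ and pass to sections: these lie in the image of $\psi$ on the level‑$1$ stabiliser of $\Gamma_M$, which sits inside $\Gamma_{M-1}\times\Gamma_{M-1}$ and contains a fixed finite‑index branch subgroup in each factor, so ``$w$ an identity of $\Gamma_M$'' forces ``$w$ an identity of that finite‑index subgroup of $\Gamma_{M-1}$''. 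One then transfers this to a statement about $\Gamma_{M-1}$ itself and iterates, driving $M$ down until $\Gamma_M$ becomes small relative to $n$, at which point being an identity forces being a law. Carrying out the transfer with a cost that does not accumulate over the $\sim M$ steps of the descent — so that the threshold stays linear in $n$ rather than blowing up — is the delicate point, and is where the self‑similar structure (and, presumably, Lysenok's recursive presentation of $\Gamma$) must be used carefully.
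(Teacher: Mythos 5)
Your lower bound is fine and is essentially the paper's own argument: the power words $x^{2^k}$ together with the Bartholdi--\u{S}uni\'{k} estimate $o(g)\ll\lvert g\rvert_S^{3/2}$ give $\mathcal{A}_{\Gamma}^S(2^k)\gg 2^{2k/3}$. (The paper simply cites that torsion bound; your parenthetical claim that the sharp exponent $3/2$ follows from ``the usual length-reduction argument'' is not a proof, but since the estimate is a quoted theorem this is only a presentational issue.)

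The upper bound, however, has a genuine gap at precisely the step you flag, and the descent you sketch cannot close it. Reducing to the claim $m_0(w)=O(n)$ is reasonable (and $\diam(\Gamma_m)\leq 2^{O(m)}$ is true, though also only asserted), but your proposed proof of that claim runs in the vacuous direction: $\Gamma_{M-1}$ is a \emph{quotient} of $\Gamma_M$, so ``$w$ is an identity of $\Gamma_M$ implies $w$ is an identity of (a finite-index subgroup of) $\Gamma_{M-1}$'' is automatic and carries no information, and driving $M$ downward only ever yields that $w$ is an identity of small quotients. The terminal step of the sketch --- that being an identity of a quotient which is small relative to $n$ forces $w$ to be a law of $\Gamma$ --- is false: $x^{2^m}$ has length $n=2^m$, is an identity of $\Gamma_m$ with $m=\log_2 n$, yet is not a law of $\Gamma$. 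So no contradiction can emerge, independently of the ``cost accumulation'' worry you raise. What is actually needed is the opposite, non-formal direction: for every non-law $w$ of length $n$, one must \emph{construct} a witnessing tuple of controlled length (equivalently, whose image is already nontrivial at depth $O(n)$). This is exactly what the paper supplies: an embedding $\Phi_n$ of the iterated wreath product $W_n$ of $C_2$ into $\Gamma$ sending the generators $a_i$ to the elements $y_{5i}^4$ of length $\ll(1+\sqrt{3})^{5i}$ (Propositions \ref{RstLengthProp} and \ref{WPEmbGrigProp}), combined with an inductive orbit argument (Proposition \ref{WPlawlengthprop}) showing every nontrivial word of length at most $n+1$ has a witness in $W_n$ of total $S_n$-length at most $(n+1)^2$; pushing this through $\Phi_n$ gives witnesses in $\Gamma$ of length $\ll (n+1)^2(1+\sqrt{3})^{5n}$, whence $\mathcal{A}_{\Gamma}^S(n)\ll\exp(Cn)$. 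Your residual-finiteness framing would only work if supplemented by such an explicit shallow-witness construction (at which point the detour through $\diam(\Gamma_m)$ is unnecessary), so as it stands the exponential upper bound is not established.
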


The upper bound in Theorem \ref{Grigmainthm} 
is based on embedding iterated wreath products 
of $C_2$ into $\Gamma$, close to the identity, 
and bounds on the lengths of the shortest laws for these groups. 
The lower bound derives 
from a bound on the rate of growth of element-orders in $\Gamma$, 
due to Bartholdi and \u{S}uni\'{k}. 
Being as it is based on complexity-bounds for 
words of a very specific type (power words), 
we expect that our lower bound in Theorem \ref{Grigmainthm} 
is far from best-possible; 
indeed, we conjecture that Grigorchuk's group has 
super-polynomial lawlessness growth. 

Our second example is Thompson's group $\mathbf{F}$ 
of homeomorphisms of the interval. 
Brin and Squier \cite{BrinSqui} prove that 
$\mathbf{F}$ has no nonabelian free subgroup and satisfies no law. 
Making their arguments effective, we obtain the following conclusion. 

\begin{thm} \label{ThompMainThm}
Thompson's group $\mathbf{F}$ satisfies 
$\mathcal{A}_{\mathbf{F}} ^S (n) \ll n$. 
\end{thm}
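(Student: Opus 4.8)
The plan is to make the lawlessness argument of Brin and Squier \cite{BrinSqui} quantitative, tracking word-lengths throughout. I will work with the standard faithful action of $\mathbf{F}$ on $[0,1]$ by orientation-preserving piecewise-linear homeomorphisms with dyadic breakpoints and power-of-two slopes, using the following familiar features: for every proper dyadic subinterval $J\subseteq(0,1)$ the elements of $\mathbf{F}$ supported on $J$ form a subgroup $\mathbf{F}_J\cong\mathbf{F}$; elements with disjoint supports commute; $\mathbf{F}$ is transitive on configurations, in the sense that any strictly increasing tuple of dyadic points in $(0,1)$ can be carried onto any other such tuple of the same length by an element of $\mathbf{F}$; and, via the known quasi-isometric description of the word metric on $\mathbf{F}$ in terms of caret numbers, an element carrying one configuration of $m$ dyadic points of depth at most $d$ onto another has word-length $O(md)$, hence $O(m)$ when $d$ is bounded. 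Since $\mathbf{F}$ is lawless, for a nontrivial reduced word $w$ of length $\ell$ the complexity of $w$ is at most $\sum_i|f_i|_S$ for any tuple with $w(f_1,\dots,f_k)\neq 1$, so it suffices to produce, for each such $w$, a witnessing tuple of total length $O(\ell)$. As a preliminary reduction one may repeatedly substitute the identity for any variable whose deletion leaves $w$ nontrivial, and so assume $w$ is \emph{minimal}: deleting any one of its $k$ variables trivialises it. Power words are handled at once since $\mathbf{F}$ is torsion-free, and in a minimal word with $k\ge 2$ every variable occurs at least twice; moreover the worst-case $k$ appears to be much smaller than $\ell$ (iterating $[[x_1,x_2],[x_3,x_4]]$ suggests $k=O(\sqrt{\ell})$), which will matter below.

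The witnessing tuple is built by Brin--Squier's dynamical device. Write the evaluation of $w$ as the composition of its $\ell$ letters $f_i^{\pm1}$ and follow the forward orbit of a chosen dyadic point: we seek a \emph{transport scheme}, that is, dyadic intervals $J_{a_0},\dots,J_{a_\ell}$ such that the $j$-th letter carries $J_{a_{j-1}}$ onto $J_{a_j}$ while $a_\ell\neq a_0$. Any $f_i$ realising the matchings it is assigned then satisfies $w(f_1,\dots,f_k)(J_{a_0})=J_{a_\ell}\neq J_{a_0}$, so $w(f_1,\dots,f_k)\neq 1$. The constraints imposed on $f_i$ by its (at most $\ell_i$ many, with $\sum_i\ell_i=\ell$) occurrences amount to an order-preserving partial matching between dyadic intervals, and the scheme is constructed by scanning $w$ left to right, at each step choosing $J_{a_j}$ inside the nonempty open interval into which order-preservation for the current variable still permits the point to be sent, with a final adjustment forcing $a_\ell\neq a_0$; reducedness of $w$ is exactly what guarantees enough room, as in \cite{BrinSqui}. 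Realising each $f_i\in\mathbf{F}$ from its matching and collecting the caret estimates yields the tuple.

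The crux is the length estimate. Run without care, the greedy transport scheme can drive the dyadic depths of the orbit points up to $\Theta(\ell)$, whereupon the caret bound gives only $\sum_i|f_i|_S=O(\ell^2)$, while the theorem demands linear. The linear bound should come from an economical scheme in which intervals are heavily reused and depths held to $O(\log\ell)$: allot each essential variable its own bounded-depth ``home region'', confine $f_i$ to it, splice the regions by short connector elements, and organise the orbit recursively along a bracketing of $w$ rather than linearly. Combined with the per-variable caret bounds $O(\ell_i+\log\ell)$ and the smallness of $k$, this should absorb the logarithmic losses; making the accounting close at $O(\ell)$ rather than $O(\ell\log\ell)$ is, I expect, where the real effort lies, with a coarser version of the same construction giving at worst $\mathcal{A}_{\mathbf{F}}^S(n)\ll n\log n$.
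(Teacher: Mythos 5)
Your overall strategy (make Brin--Squier effective by building, for each word $w$, a witnessing tuple realising a prescribed partial dynamical scheme) is reasonable, but the proposal does not prove the theorem: the crucial quantitative step is missing, and you say so yourself. Your own accounting for the greedy transport scheme gives total witness length $O(\ell^2)$; the proposed repair (bounded-depth ``home regions'', depths $O(\log\ell)$, recursive organisation along a bracketing of $w$) is described only with ``should'' and ``I expect'', and even on your own assessment closes at best at $O(\ell\log\ell)$ without a further idea. Since the whole content of the theorem is the linear bound, this is a genuine gap, not a detail. (Two smaller points: the rank $k$ of the free group is fixed in the definition of $\mathcal{A}_{\Gamma}^S$, so the heuristic $k=O(\sqrt{\ell})$ plays no role; and the auxiliary claim that an element carrying one $m$-point dyadic configuration of depth $\le d$ to another has length $O(md)$ is itself something you would need to justify from the caret metric.)

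The paper avoids per-word constructions entirely, and this is the idea you are missing: one exhibits a \emph{single} pair of elements, of length $O(n)$, that is a witness for \emph{every} nontrivial word of length at most $n$. Brin--Squier produce homeomorphisms $f_0,f_1$ of the circle whose squares generate a free subgroup of $\Homeo(S^1)$ by ping-pong; lifting $f_0^2$ to $\mathbb{R}$ and truncating it to be the identity outside a window $[0,8(n+1)]$ gives $U_n\in\mathbf{F}$, and $V_n=A^2U_nA^{-2}$. Because each letter of a word moves points only a bounded distance, the truncation is invisible to words of length $\le n$, so $w(U_n,V_n)\neq e$ for all nontrivial $w$ with $\lvert w\rvert\le n$; and the recursion $U_{n+1}=A^8U_nA^{-8}U_0$ shows $\lvert U_n\rvert_S,\lvert V_n\rvert_S=O(n)$. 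This immediately yields $\chi_{\mathbf{F}}^S(w)=O(n)$ uniformly over such $w$, i.e.\ $\mathcal{A}_{\mathbf{F}}^S(n)\ll n$, with no depth/caret bookkeeping at all. If you want to salvage your approach, the lesson is that the witnessing dynamics should live at a spatial scale linear in $n$ (translates under $A$), where moving the basepoint one step costs $O(1)$ generators, rather than at dyadic depth, where each subdivision costs carets.
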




If the word $w$ does not vanish under evaluation at the tuple $\mathbf{g}$, 
then whenever $\pi : \Gamma \rightarrow Q$ is a homomorphism 
in whose kernel $w (\mathbf{g})$ does not lie, 
$w$ is not a law for $Q$. 
For residually finite lawless groups $\Gamma$, 
this observation draws a link between the behaviour of 
$\mathcal{A}_{\Gamma} ^S$, the lengths of the shortest laws holding in finite groups, 
and the \emph{residual finiteness growth function} 
$\mathcal{F}_{\Gamma} ^S : \mathbb{N} \rightarrow \mathbb{N}$ 
due to Bou-Rabee \cite{BouRab}. 
For instance, in \cite{BradThom} the author and A. Thom proved 
the following result. 

\begin{thm} \label{BradThomRFThm}
Suppose $\Gamma$ contains a nonabelian free subgroup and let $\delta > 0$. 
There exists $C>0$ such that: 
\begin{equation*}
\mathcal{F}_{\Gamma} ^S (n) \geq C n^{3/2} / \log (n)^{9/2 + \delta}\text{.}
\end{equation*}
\end{thm}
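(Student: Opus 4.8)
The plan is to reduce to the case of a nonabelian free subgroup and then, via the elementary observation recorded just above, to convert the desired lower bound into the construction of a short word that is a law for every small finite group. If $\mathcal{F}_\Gamma^S$ ever takes the value $+\infty$ (equivalently, $\Gamma$ is not residually finite) the inequality is vacuous for large $n$, so assume $\Gamma$ residually finite. Fix a nonabelian free subgroup $F = \langle x,y\rangle \le \Gamma$, put $L = \max(|x|_S,|y|_S)$, and note that a nontrivial $g \in F$ of length $m$ in $\{x,y\}$ has $|g|_S \le Lm$; hence any finite quotient $\pi\colon \Gamma \to Q$ of order at most $\mathcal{F}_\Gamma^S(Lm)$ separating all elements of $S$-length $\le Lm$ has $\pi(g)\ne 1$, so $\pi(F)$ is a finite group of order $\le \mathcal{F}_\Gamma^S(Lm)$ in which $g$ survives. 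Thus $\mathcal{F}_F^{\{x,y\}}(m) \le \mathcal{F}_\Gamma^S(Lm)$, and it suffices to bound $\mathcal{F}_{F_2}$ below. For that, observe: if $w$ is a nontrivial reduced word in $F_2$ of length $n$ which is a law for \emph{every} finite group of order $< N$, then some finite quotient of $F_2$ separates $w$ (residual finiteness of $F_2$), and by the observation above any such quotient has order $\ge N$, whence $\mathcal{F}_{F_2}(n) \ge N$. The whole problem therefore reduces to: produce, for each $N$, a nontrivial word of length $\ll N^{2/3}\log(N)^{3}$ which is a law for all finite groups of order $< N$. Inverting this relation, using $\log N \asymp \log n$, converts the exponents $2/3$ and $3$ into $3/2$ and $9/2$ and yields the claim, with $\delta > 0$ absorbing the lower-order slack and any $\varepsilon$-loss in the logarithmic exponent of the construction.

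To build such a word I would argue along a composition series, using the classification of finite simple groups. A group $G$ with $|G| < N$ has at most $\log_2 N$ composition factors, of which at most $\log_{60} N$ are nonabelian; each nonabelian factor is a finite simple group of order $< N$, and by the known bounds on the shortest law of a finite simple group it admits a law of length $\ll N^{2/3}\log(N)^{O(1)}$ — the worst case being $\mathrm{PSL}_2(q)$ with $q \asymp N^{1/3}$, where essentially the whole $N^{2/3}$ budget is spent. The abelian factors are treated in bulk rather than one at a time: the solvable radical of $G$ has derived length $O(\log\log N)$ (the order of a solvable group grows at least doubly exponentially in its derived length), so the iterated commutator word $\delta_d$ is a law for it of length only $2^{O(\log\log N)} = \log(N)^{O(1)}$; the outer-automorphism contributions in the generalized Fitting structure are small and solvable by Schreier's conjecture and handled the same way, while the permutation action on the $O(\log N)$ simple socle factors contributes a quotient of order $< N$ to which the construction is applied recursively. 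One then glues a law for $G$ from the laws of the successive layers by the standard substitution trick: if $u$ is a law for the quotient and $v$ a law for the kernel of an extension, then $v$ evaluated at the values of $u$ is a law for the extension; the resulting word is manifestly nontrivial in the free group.

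The main obstacle is the length bookkeeping in this gluing. Substitution multiplies word-lengths, so applied naively once per layer it gives a super-polynomial, not a polylogarithmic, overhead. Making it work requires organizing the chief series so that only $O(\log\log N)$ genuine nestings occur — collapsing all abelian layers into one bottom block of bounded derived length, and reducing the semisimple part through the socle together with a small recursive symmetric-group quotient — and arranging that essentially the entire length budget $N^{2/3}$ is consumed by a single innermost block, so that the multiplicative blow-up costs only a $\log(N)^{O(1)}$ factor overall. The second place where real work is needed is the input on individual simple groups: producing a law for $\mathrm{PSL}_2(q)$ (and for the other families of Lie type, and for $\mathrm{Alt}(m)$) whose length has the correct, roughly two-thirds-power, dependence on the group order, which requires exploiting the element-order structure — the orders $p$, $(q-1)/d$, $(q+1)/d$ in the $\mathrm{PSL}_2$ case — beyond what a bare power word delivers.
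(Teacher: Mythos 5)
Your reduction is correct and is essentially the route the paper takes: the statement is quoted from \cite{BradThom}, and in this paper it is recovered as the special case of Theorem \ref{RFLawlessThmIntro} where $\mathcal{A}_{\Gamma}^S$ is bounded (Theorem \ref{bddLGthm}), the engine being Proposition \ref{LawsRFprop} together with the short-law result, Theorem \ref{BThomThm}. Your first paragraph performs the same computation in slightly different clothing: rather than evaluating the short law $w_N$ at the bounded-length free pair and noting that $w_N(x,y)\neq e$ has $S$-length $\leq L\lvert w_N\rvert$ yet dies in every quotient of order less than $N$, you pass through monotonicity of residual finiteness growth under restriction to the free subgroup and lower-bound $\mathcal{F}_{F_2}$; the inversion of $n \asymp N^{2/3}(\log N)^{3+\delta'}$ into $N \gg n^{3/2}/(\log n)^{9/2+\delta}$ is right. (One phrasing slip: there need not be a single quotient of order at most $\mathcal{F}_{\Gamma}^S(Lm)$ separating \emph{all} elements of length at most $Lm$ --- the quotient depends on the element --- but that is all your argument actually uses, so $\mathcal{F}_F^{\lbrace x,y\rbrace}(m)\leq\mathcal{F}_{\Gamma}^S(Lm)$ stands.) Where you go beyond what is needed is the last two paragraphs: the existence of a nontrivial word of length $O_{\delta}(N^{2/3}\log(N)^{3+\delta})$ that is a law for all finite groups of order at most $N$ is exactly Theorem \ref{BThomThm}, used here as a quoted black box from \cite{BradThom}; your CFSG/composition-series sketch of it is only a plan, and you yourself flag the two genuinely hard points (controlling the length blow-up when gluing laws along the normal structure, and producing laws of roughly two-thirds-power length for the simple groups of Lie type, with $\mathrm{PSL}_2(q)$ the bottleneck). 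So with Theorem \ref{BThomThm} cited, your argument is complete and matches the paper's; as a self-contained proof it is not, but the gap lies entirely inside that quoted ingredient.
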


By the preceding paragraph, 
we deduce the following conclusion valid for all lawless groups, 
of which Theorem \ref{BradThomRFThm} is a special case 
(since by Theorem \ref{bddLGthm} groups satisfying the hypothesis 
of Theorem \ref{BradThomRFThm} are precisely those for which 
$\mathcal{A}_{\Gamma} ^S$ is bounded). 

\begin{thm} \label{RFLawlessThmIntro}
Let $\Gamma$ be a finitely generated lawless group 
and let $\delta > 0$. There exists $C>0$ such that: 
\begin{equation*}
\mathcal{F}_{\Gamma} ^S \big( n \mathcal{A}_{\Gamma} ^S (n) \big) 
\geq C n^{3/2} / \log (n)^{9/2 + \delta}\text{.}
\end{equation*}
\end{thm}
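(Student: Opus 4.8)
The plan is to reduce the general lawless case to the free-subgroup case handled by Theorem \ref{BradThomRFThm}, using the complexity bound encoded in $\mathcal{A}_{\Gamma}^S$ together with the link to residual finiteness sketched in the paragraph preceding the statement. First I would recall the precise definition of the residual finiteness growth function $\mathcal{F}_{\Gamma}^S$: it sends $n$ to the smallest $N$ such that every nontrivial $\gamma \in \Gamma$ of word-length at most $n$ survives in some finite quotient of size at most $N$. The key elementary observation, already flagged in the excerpt, is that if $w$ is a reduced word that is not a law for $\Gamma$, witnessed by a tuple $\mathbf{g}$ with $w(\mathbf{g}) \neq 1$, and if $\pi : \Gamma \to Q$ is a finite quotient with $\pi(w(\mathbf{g})) \neq 1$, then $w$ is not a law for $Q$ either. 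So from a short law-violating word of bounded complexity, plus a good residual finiteness estimate, one extracts a finite quotient in which that word still fails to be a law — and one controls how short the word is and how small the quotient is.

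The main step is then to run the argument behind Theorem \ref{BradThomRFThm} "internally to a finite quotient". Let me spell this out. Fix $n$ and consider all reduced words $w$ of length at most $n$; since $\Gamma$ is lawless, each such nontrivial $w$ is violated by some tuple $\mathbf{g}_w$ of total word-length at most $\mathcal{A}_{\Gamma}^S(n)$, so the element $w(\mathbf{g}_w)$ has word-length at most roughly $n \cdot \mathcal{A}_{\Gamma}^S(n)$ (each letter of $w$ contributes a generator of $\Gamma$ of length $\le \mathcal{A}_{\Gamma}^S(n)$). Applying the definition of $\mathcal{F}_{\Gamma}^S$ at the scale $n\,\mathcal{A}_{\Gamma}^S(n)$, we find a finite quotient $\pi_w : \Gamma \to Q_w$ of order at most $\mathcal{F}_{\Gamma}^S(n\,\mathcal{A}_{\Gamma}^S(n))$ in which $w$ is still not a law. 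The content of the proof of Theorem \ref{BradThomRFThm} (as in \cite{BradThom}) is essentially a lower bound on the order of any finite group in which all words up to a given length fail to be laws — equivalently, a lower bound on the length of the shortest law in a finite group of given order. Packaging the $Q_w$ together (or invoking the relevant lemma from \cite{BradThom} in the form: a finite group of order $N$ has a law of length $\ll (\log N)^{\text{something}}$, i.e. if no word of length $\le n$ is a law then $N \gg n^{3/2}/\log(n)^{9/2+\delta}$) yields $\mathcal{F}_{\Gamma}^S(n\,\mathcal{A}_{\Gamma}^S(n)) \ge C n^{3/2}/\log(n)^{9/2+\delta}$, which is exactly the claimed inequality.

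I expect the genuine obstacle to be bookkeeping rather than new ideas: one must check that the word-length estimate $|w(\mathbf{g}_w)|_S \le n\,\mathcal{A}_{\Gamma}^S(n)$ is the right thing to feed into $\mathcal{F}_{\Gamma}^S$ (up to multiplicative constants absorbed into $C$), and one must confirm that the argument of \cite{BradThom} really does factor through "a single finite group in which many words fail to be laws" as opposed to using the ambient free group in an essential way — i.e. that it is the statement about finite groups, not about $\Gamma$ itself, that is being used. Since Theorem \ref{BradThomRFThm} is stated for arbitrary $\Gamma$ containing a nonabelian free subgroup, and since in that case $\mathcal{A}_{\Gamma}^S$ is bounded (Theorem \ref{bddLGthm}), the new statement genuinely specializes to the old one, which is a good consistency check that the reduction is set up correctly. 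The only mild subtlety is handling the logarithm and the finitely many small values of $n$ where $\mathcal{A}_{\Gamma}^S(n)$ or $\log(n)$ misbehave, which is absorbed by choosing $C$ small.
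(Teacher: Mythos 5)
Your overall plan correctly identifies the ingredients (lawlessness plus the complexity bound $\mathcal{A}_{\Gamma}^{S}$, the definition of $\mathcal{F}_{\Gamma}^{S}$, and the short-law result of \cite{BradThom}), but the step where the $n^{3/2}$ bound is extracted does not go through as written. You fix $n$, produce for each nontrivial $w$ with $\lvert w \rvert \le n$ a finite quotient $Q_w$ of order at most $N := \mathcal{F}_{\Gamma}^{S}\big(n\,\mathcal{A}_{\Gamma}^{S}(n)\big)$ in which $w$ fails to be a law, and then invoke a lower bound on the order of a finite group having no law of length at most $n$. But no single $Q_w$ is known to have that property: each $Q_w$ merely fails the one word $w$ and may satisfy very short laws. ``Packaging the $Q_w$ together'' by a direct product does give a group with no law of length at most $n$, but its order can be as large as $N^{M}$ with $M$ the (exponentially large in $n$) number of words considered, which destroys the bound. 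Moreover the per-group form you quote (``a finite group of order $N$ has a law of length $\ll (\log N)^{\text{something}}$'') is not what \cite{BradThom} provides: the bound is $O_{\delta}\big(N^{2/3}\log(N)^{3+\delta}\big)$, and, more importantly, what is needed is the uniform statement (Theorem \ref{BThomThm}): a \emph{single} word $w_N$ of that length which is a law for every finite group of order at most $N$ simultaneously. The per-group statement applied to the $Q_w$ gives nothing, since none of them is known to avoid all short laws.

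With the uniform statement the argument needs only that one word, and this is how the paper proceeds (Proposition \ref{LawsRFprop} combined with Theorem \ref{BThomThm}): since $\Gamma$ is lawless, $w_N$ is not a law for $\Gamma$, so there are $g,h \in \Gamma$ with $\lvert g \rvert_S + \lvert h \rvert_S \le \mathcal{A}_{\Gamma}^{S}(\lvert w_N \rvert)$ and $w_N(g,h) \neq e$; the element $w_N(g,h)$ has word-length at most $\lvert w_N \rvert \cdot \mathcal{A}_{\Gamma}^{S}(\lvert w_N \rvert)$ and lies in the kernel of every homomorphism onto a finite group of order at most $N$, because $w_N$ is a law for all such groups; hence $\mathcal{F}_{\Gamma}^{S}\big(\lvert w_N \rvert\,\mathcal{A}_{\Gamma}^{S}(\lvert w_N \rvert)\big) > N$. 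Substituting $n = \lvert w_N \rvert \ll_{\delta} N^{2/3}\log(N)^{3+\delta}$ and inverting (with monotonicity of $\mathcal{F}_{\Gamma}^{S}$ and $\mathcal{A}_{\Gamma}^{S}$ handling intermediate values of $n$ and the small-$n$ issues you mention) yields $\mathcal{F}_{\Gamma}^{S}\big(n\,\mathcal{A}_{\Gamma}^{S}(n)\big) \gg n^{3/2}/\log(n)^{9/2+\delta}$. Your consistency check against Theorem \ref{BradThomRFThm} is sound and your reduction is right in spirit; the genuine gap is precisely the distinction between the per-group and uniform short-law statements at the point where the quantitative bound is extracted, and the fix is to run the argument with the single uniform law $w_N$ rather than with all words of length at most $n$.
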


We also have an analogous bound for the residual $p$-finiteness growth. 
Finally we note a connection between $\mathcal{A}_{\Gamma} ^S$ 
and decision problems for recursively presented groups. 

\begin{thm} \label{WordProbThm}
If $\Gamma$ is a finitely generated recursively presented lawless group 
with decidable word problem, 
then $\mathcal{A}_{\Gamma}$ is computable. 
\end{thm}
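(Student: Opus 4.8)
The plan is to describe, given $n \in \mathbb{N}$, an algorithm that outputs $\mathcal{A}_\Gamma ^S (n)$ for $S$ any fixed finite generating set (decidability of the word problem of $\Gamma$ does not depend on this choice, so there is no loss in fixing $S$). Since $S$ is finite there are only finitely many nontrivial reduced words of length at most $n$ in the variables $x_1, x_2, \ldots$, and as any such word involves at most $n$ distinct variables, a complete list of them (up to the harmless operation of renaming variables, which does not affect complexity) can be produced mechanically. It therefore suffices to show that the map $w \mapsto c_\Gamma (w)$ sending a nontrivial reduced word $w$ to its complexity in $\Gamma$ is computable: then $\mathcal{A}_\Gamma ^S (n)$ is recovered as the maximum of $c_\Gamma (w)$ over the finite list of $w$ with $|w| \leq n$, each such value being well-defined and finite precisely because $\Gamma$ is lawless.

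To compute $c_\Gamma (w)$, where $w = w(x_1, \ldots, x_k)$, I would run the following bounded search. For $\ell = 0, 1, 2, \ldots$ in turn: enumerate the finitely many $k$-tuples $(u_1, \ldots, u_k)$ of words over $S^{\pm 1}$ with $|u_1| + \cdots + |u_k| \leq \ell$; for each, form the word $w(u_1, \ldots, u_k)$ over $S^{\pm 1}$ by substitution; and use the word-problem algorithm for $\Gamma$ to decide whether $w(u_1, \ldots, u_k) = 1$ in $\Gamma$. Halt at the first $\ell$ for which some tuple gives $w(u_1, \ldots, u_k) \neq 1$, and output that $\ell$. The one point needing a remark is that searching over tuples of \emph{words} rather than tuples of \emph{group elements} is harmless: every element of $\Gamma$ has a representative word realising its word-metric length, so the least $\ell$ admitting a word-tuple of total length $\leq \ell$ on which $w$ evaluates nontrivially equals the minimum of $|g_1|_S + \cdots + |g_k|_S$ over tuples $(g_1, \ldots, g_k) \in \Gamma^k$ with $w(g_1, \ldots, g_k) \neq 1$, i.e. equals $c_\Gamma (w)$.

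Correctness when the search halts is then immediate from this remark together with the fact that we process $\ell$ in increasing order, so at the moment of halting every word-tuple of total length strictly less than $\ell$ has already been tested and found to evaluate trivially. The crux is termination, and this is exactly where lawlessness is used: since $w$ is not a law for $\Gamma$ there is some $(g_1, \ldots, g_k) \in \Gamma^k$ with $w(g_1, \ldots, g_k) \neq 1$, hence a word-tuple witnessing this, so the search must reach a successful $\ell$ after finitely many steps; and each step is effective precisely because the word problem of $\Gamma$ is decidable. I do not anticipate any serious obstacle — the substance of the argument is simply the interplay of the two hypotheses, decidability of the word problem letting us certify a candidate witness tuple while lawlessness guarantees that a witness exists and hence that the otherwise open-ended search over $\ell$ terminates. (The recursive-presentation hypothesis is then automatic, a finitely generated group with decidable word problem being in particular recursively presented, and is retained only for emphasis.) The residual checks — enumerability of reduced words of bounded length, invariance of complexity under renaming of variables, and effectivity of the substitution step — are entirely routine.
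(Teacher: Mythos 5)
Your proposal is correct and follows essentially the same route as the paper: reduce to computing $\chi_{\Gamma}^S(w)$ for each of the finitely many words of length at most $n$, and compute each complexity by an increasing search over candidate witness tuples, certified by the word-problem algorithm, with lawlessness guaranteeing termination. If anything, your bookkeeping (searching tuples by \emph{total} word length $\ell$) states the halting criterion slightly more precisely than the paper's step-$m$ search over tuples whose entries each have length at most $m$.
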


If, instead of word-maps, we consider word-maps with coefficients, 
then we have the class of MIF groups (instead of lawless groups) 
and we can define an \emph{MIF growth function} $\mathcal{M}_{\Gamma}$. 
By contrast with Theorem \ref{bddLGthm}, we have a nonconstant universal lower bound. 

\begin{thm} \label{MIFLBThm}
There are no groups of bounded MIF growth. 
Indeed, for any finitely generated group $\Gamma$, 
$\mathcal{M}_{\Gamma} (l) \gg \log (l)$. 
\end{thm}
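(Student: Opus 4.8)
The plan is to count: if $\mathcal{M}_{\Gamma}(l)$ were small, then \emph{every} word-with-coefficients of length at most $l$ would be killed (as a map on $\Gamma$) by plugging in tuples of small total length, and there are too few such short tuples to account for all the distinct maps that short words-with-coefficients can realize. More precisely, fix the finite generating set $S$ and let $m = \mathcal{M}_{\Gamma}(l)$. By definition of the MIF growth function, for every mixed word $w$ of length at most $l$ (in, say, one variable $x$, with coefficients drawn from words in $S^{\pm}$ of bounded length) there is some $g \in \Gamma$ with $|g|_S \leq m$ such that $w(g) \neq 1$. Equivalently, the map
\begin{equation*}
\Phi : \{ w : |w| \leq l \} \longrightarrow \{ \text{functions } B_S(m) \to \Gamma \}, \qquad w \longmapsto \big( g \mapsto w(g) \big)
\end{equation*}
separates words from the trivial mixed word; but in fact I want a bit more, namely that $\Phi$ is close to injective on a large family of words, so that its image must be large, forcing $|B_S(m)|$ to be large, hence $m \gg \log l$.

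First I would identify an explicit family of $\exp(\Omega(l))$ mixed words of length $O(l)$ whose induced maps on $\Gamma$ are pairwise distinct \emph{as soon as} $\Gamma$ is infinite — for instance words of the form $x a_1 x a_2 \cdots x a_k$ where $a_1, \dots, a_k$ range over a two-element subset $\{s, s^2\}$ (or $\{1, s\}$) of $\Gamma$ with $s$ of infinite order, or, if $\Gamma$ is infinite torsion, over two elements that remain distinct — the point being that two distinct such words differ by a mixed identity, which $\Gamma$ does not satisfy, so they induce different functions $\Gamma \to \Gamma$. (If $\Gamma$ is finite the statement is vacuous since then $\Gamma$ \emph{does} satisfy a mixed identity and $\mathcal{M}_{\Gamma}$ is not defined, so I may assume $\Gamma$ infinite.) This gives at least $2^{k} = 2^{\Omega(l)}$ words mapping to distinct functions under $\Phi$.

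Next I would bound the size of the codomain that these functions actually land in. Each $w$ in my family has the form (coefficient)$\cdot x \cdot$(coefficient)$\cdots$, so for $g \in B_S(m)$ the value $w(g)$ has $S$-length at most $C(l + km) \le C'lm$ for an absolute constant; hence all the relevant functions take values in $B_S(C'lm)$, and the number of such functions is at most $|B_S(C'lm)|^{|B_S(m)|}$. Comparing with the lower bound $2^{\Omega(l)}$ on the number of distinct functions yields
\begin{equation*}
2^{\Omega(l)} \ \leq \ |B_S(C'lm)|^{|B_S(m)|} \ \leq \ \big( C'' (l m) \big)^{\,|S|^{2m} \cdot |B_S(m)|}\ ,
\end{equation*}
using only the trivial bound $|B_S(r)| \le (2|S|+1)^r$. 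Taking logarithms, $\Omega(l) \le |S|^{O(m)} \log(lm)$, and since $m = \mathcal{M}_\Gamma(l) = o(l)$ would be needed for a contradiction, rearranging forces $|S|^{O(m)} \gg l / \log l$, i.e. $m \gg \log l$, as claimed.

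The main obstacle I anticipate is the first step done \emph{carefully and uniformly}: producing, for an arbitrary finitely generated infinite group with no mixed identity, a genuinely exponential-in-$l$ family of short mixed words that are pairwise inequivalent as maps — with the delicate point that inequivalence of two words $u, v$ means $u v^{-1}$ (a mixed word of length $O(l)$) is not a mixed identity, which is exactly what the MIF hypothesis gives, but one must make sure the family is large enough and the words short enough simultaneously. A secondary subtlety is handling the case distinction cleanly between $\Gamma$ having an element of infinite order and $\Gamma$ being infinite torsion; in the latter case I would pick $s, t \in \Gamma$ with $s \neq t$ and use coefficients from $\{s, t\}$, relying again only on the failure of mixed identities to guarantee that distinct coefficient-patterns give distinct maps. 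Everything else is a routine volume estimate.
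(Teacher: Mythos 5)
The fatal step is the volume estimate at the end. With only the trivial bound $\lvert B_S(r)\rvert \le (2\lvert S\rvert+1)^r$, the number of functions from $B_S(m)$ into $B_S(C'lm)$ is at most $(2\lvert S\rvert+1)^{C'lm\,\lvert B_S(m)\rvert}$, i.e. $\exp\big(O(lm\,\lvert B_S(m)\rvert)\big)$, which is exponential in $l$ already for $m=1$. Comparing this with your lower bound $2^{\Omega(l)}$ on the number of distinct restricted functions therefore yields only an inequality between constants (roughly $c\le C\,m\,\lvert B_S(m)\rvert$), not $m\gg\log l$ and not even unboundedness of $m$. Your displayed intermediate bound $\big(C''(lm)\big)^{\lvert S\rvert^{2m}\lvert B_S(m)\rvert}$ --- polynomial in $lm$ with exponent depending only on $m$ --- does not follow from the trivial bound: it would amount to saying that the ball of radius $C'lm$ has polynomially many elements, which is false in general (and necessarily false here, since an MIF group is lawless, hence not virtually nilpotent, hence not of polynomial growth). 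The underlying obstruction is that a single value $w(g)$ already ranges over exponentially many (in $l$) elements, so counting all candidate functions $B_S(m)\to\Gamma$ can never be beaten by the count of words; a pigeonhole of this shape could only work if the restricted functions were determined by data of subexponential size, e.g. by their vanishing pattern on $B_S(m)$, but the MIF hypothesis only separates the \emph{values} of two distinct words, not their vanishing sets, so that refinement is not available without a new idea. (A secondary, fixable point: to separate $u\neq v$ of length $\le l$ you evaluate $uv^{-1}$, of length $\le 2l$, so your witnesses live in $B_S\big(\mathcal{M}_\Gamma(2l)\big)$ rather than $B_S\big(\mathcal{M}_\Gamma(l)\big)$.)

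For comparison, the paper's proof is not a counting argument: for $g$ ranging over $B_S(r)\setminus\lbrace e\rbrace$ the commutators $[g,x]$ freely generate a free subgroup of $\Gamma\ast\mathbb{Z}$ (Lemma \ref{freeprodlem}), and the Kozma--Thom combination (Proposition \ref{KozThoProp}) produces a single nontrivial word $w$ of length at most $\exp(Cr)$ which vanishes wherever any $[g,x]$ vanishes. Since every $h\in B_S(r)$ commutes with some nontrivial $g\in B_S(r)$ (take $g=h$, or any $g$ when $h=e$), $w$ vanishes on all of $B_S(r)$, so its complexity exceeds $r$ while its length is only exponential in $r$; taking $r\asymp\log l$ gives $\mathcal{M}_\Gamma(l)\gg\log l$. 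The idea your proposal is missing is exactly this: one needs a single short-ish word that provably \emph{vanishes} on the whole small ball, rather than a large family of words that merely take pairwise distinct values there.
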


We also have an effective version of the fact (due to G. Baumslag) that 
nonabelian free groups are MIF. 

\begin{thm} \label{MIFfreeThm}
Let $\Gamma$ be a nonabelian free group of finite rank. 
Then $\mathcal{M}_{\Gamma} (n) \ll n \log(n)$. 
\end{thm}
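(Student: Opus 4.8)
The plan is: given a mixed-identity candidate $w$ for $\Gamma$ of length at most $n$, exhibit a short tuple of elements of $\Gamma$ at which $w$ does not vanish. Fix a free basis $S$ of $\Gamma$, so $\Gamma$ is free of rank $r = |S| \geq 2$ and $\Gamma * F(x_1,\dots,x_k)$ is free on $S \sqcup \{x_1,\dots,x_k\}$. Write $w$ in free-product normal form
\[ w = c_0\, v_1\, c_1\, v_2 \cdots v_m\, c_m, \]
with $c_t \in \Gamma$, each $v_t$ a non-trivial reduced word in the $x_i$, and $c_t \neq 1$ for $0 < t < m$. Since $w \notin \Gamma$ we have $m \geq 1$, and since $|w| \leq n$ we have $k \leq n$, $m \leq n$ and $\sum_t |c_t| \leq n$. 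It suffices to find a tuple $\mathbf g = (g_1,\dots,g_k)$ of elements of $\Gamma$ with $w(\mathbf g) \neq 1$ in which every $g_i$ is a reduced word over $S^{\pm 1}$ of length $L := \lceil C\log n \rceil$, for a suitable absolute constant $C$: then the length of $\mathbf g$ is $kL = O(n \log n)$.

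The tuple will be chosen generically. Put $p := \lceil 10\log n \rceil$, so $L$ exceeds any fixed multiple of $p$ once $C$ is large. I would require: (a) the $2k$ words $g_1^{\pm 1},\dots,g_k^{\pm 1}$ have pairwise distinct initial segments of length $p$ (equivalently, any two of them overlap in at most $p$ letters); (b) no initial segment of length $>p$ of any $c_t^{\pm 1}$ is an initial segment of any $g_i^{\pm 1}$; and (c) for each of the (at most $m \leq n$) sub-patterns of $w$ giving rise to a cancellation pocket in the analysis below, the value $v_t(\mathbf g)$ avoids a certain explicit coset of a maximal cyclic subgroup of $\Gamma$. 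Since the number of reduced words over $S^{\pm 1}$ of length $L$ is $2r(2r-1)^{L-1} \geq 3^L$, a routine counting estimate shows that the tuples violating (a), (b) or (c) each form an exponentially small (in $p$, respectively in $L$) proportion of all tuples — using $k \leq n$ and $\sum_t|c_t|\leq n$ to bound the number of events union-bounded over — so for $C$ a large enough absolute constant an admissible $\mathbf g$ exists. (One may first cyclically reduce $w$ in $\Gamma * F_k$, doubling $n$ at worst, to remove some degenerate cases.)

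To verify $w(\mathbf g) \neq 1$, substitute: $w(\mathbf g) = c_0\,\bar v_1\,c_1\cdots\bar v_m\,c_m$ with $\bar v_t := v_t(\mathbf g)$. By (a) the freely reduced form of each $\bar v_t$ is the concatenation of its syllable-blocks $g_a^{\epsilon}$ with at most $p$ letters cancelled at each internal junction, so every block keeps an uncancelled core of at least $L - 2p > 0$ letters; by (b), at each coefficient boundary at most $p$ further letters cancel. Hence, so long as no coefficient $c_t$ is entirely swallowed, every block-core survives into the reduced form of $w(\mathbf g)$, which is therefore non-empty. A coefficient $c_t$ ($0<t<m$) can be swallowed only if $|c_t| \leq 2p$, and then cancellation cascades: the blocks flanking $c_t$ are forced to be mutually inverse, which forces a reversed-inverse matching between a terminal segment of $v_t$ and an initial segment of $v_{t+1}$, and the cascade propagates inward until it stabilises at a non-trivial conjugate $\bar u\, c_t\, \bar u^{-1}$ of the non-trivial element $c_t$, or (in the fully symmetric case) at a product $c_{t-1}(\bar u\, c_t\, \bar u^{-1})c_{t+1}$ which condition (c) guarantees to be non-trivial. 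Recursing through all such nested ``palindromic'' cancellation pockets — there are at most $m \leq n$ of them, which is why (c) amounts to at most $n$ constraints — one concludes that $w(\mathbf g)$ reduces to a non-empty word, so $w(\mathbf g) \neq 1$ and $\mathcal M_\Gamma(n) \leq kL = O(n\log n)$.

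The main obstacle is precisely this last step: making the cancellation bookkeeping rigorous — tracking how cancellation propagates once short coefficients vanish, recognising the nested pockets, and identifying exactly which cosets must be excluded in (c) and checking that a generic tuple excludes them. The subtlety is that both $\bar u$ and the target of the ``bad equation'' themselves depend on the inner coordinates of $\mathbf g$, so the exclusion must be organised either as a perturbation of an already-generic tuple or by processing the pockets from the inside out. Everything else I expect to be routine — the normal-form reduction, the counting estimate for (a)--(c), and the survival-of-cores argument in the non-degenerate case. Finally, note that one cannot circumvent the simultaneous choice of all the $g_i$ by substituting the variables one at a time, since each such substitution inflates the lengths of the remaining coefficients by a factor $\Theta(\log n)$ and hence, after $k$ steps, degrades the bound on the length of $\mathbf g$ to $O(n^2)$.
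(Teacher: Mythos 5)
Your plan (a generic $k$-tuple of reduced words of length $\Theta(\log n)$, constrained by small-cancellation-type conditions (a)--(c), followed by a survival-of-cores argument) is not the paper's argument, and as it stands it has a genuine gap -- the one you yourself flag. The entire difficulty of the theorem is concentrated in the step you leave open: controlling what happens when a short coefficient $c_t$ is swallowed and cancellation cascades across several syllables. Your condition (c) is not yet a well-defined event: the cosets to be avoided are only identified ``in the analysis below,'' that analysis is not carried out, and, as you note, both the conjugator $\bar u$ and the target of the bad equation depend on the very tuple $\mathbf{g}$ being chosen, so a union bound over finitely many explicit bad sets is not available without reorganizing the argument (inside-out processing of pockets, or a perturbation scheme), which is exactly the unproven content. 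Moreover the claim that a swallowed $c_t$ forces the flanking blocks to be ``mutually inverse'' and that the cascade always stabilises at a conjugate $\bar u c_t \bar u^{-1}$ or a single three-term product is asserted, not proved; nested and interacting pockets (e.g.\ several consecutive short coefficients, or pockets involving the same variable in both walls) are precisely where such bookkeeping usually breaks or needs new hypotheses. So the proposal is an outline of a plausible strategy rather than a proof.

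For comparison, the paper avoids all of this by two moves. First, it reduces to a single variable ($\Gamma \ast F_k$ embeds into $\Gamma \ast \mathbb{Z}$ compatibly with complexity), so $w(x) = a_1 x^{k_1}\cdots a_l x^{k_l}$. Second, instead of a generic tuple it substitutes $x \mapsto u^m$, where $u$ is a short element ($\lvert u \rvert_S \ll \log n$, found by the same centralizer-counting you use) which is not a proper power and commutes with no coefficient $a_i$, and $m$ is just large enough that $\lvert u^m \rvert_S \geq \lvert u^2\rvert_S\, l + \sum_i \lvert a_i \rvert_S$; this gives total substituted length $O(n\log n)$. The key point is a strengthened statement proved by induction on the number of syllables $l$: $w(u^m)$ does not merely fail to vanish, it fails to \emph{commute with $u$}. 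If it did commute, one would get a relation $u^{k_0}a_1u^{k_1 m}\cdots a_l u^{k_l m}=e$, and a geodesic-overlap argument in the Cayley tree (two long segments labelled by powers of $u$ must overlap in length $\geq \lvert u^2\rvert_S$, which since $u$ is not a proper power forces the overlap to be ``in phase'') produces a shorter subword of the same form commuting with $u$, contradicting the inductive hypothesis. This commutation trick is what replaces your pocket analysis: it converts the cancellation bookkeeping into a single clean induction. If you want to salvage your approach, the missing work is to prove a precise ``stable normal form'' lemma for $w(\mathbf{g})$ under (a)--(b) together with an explicitly quantified, $\mathbf{g}$-independent family of excluded equations for (c); alternatively, adopt the one-variable power substitution, which bypasses genericity altogether.
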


\subsection{Background and structure of the paper}

There are some important similarities between the class of groups 
satisfying laws and the class of amenable groups: 
both classes are closed under taking subgroups; quotients and extensions; 
both contain all finite and abelian groups, 
and neither contains $F_2$.  
An important difference between the two classes is that 
an ascending union of amenable groups is amenable, 
whereas an ascending union of groups with laws may be lawless. 
Our Theorem \ref{EEffthm} shows how stark a distinction is opened up by this property: 
by taking (an extension of) an ascending union of finite groups, 
we may construct amenable groups which are nonetheless ``very'' lawless. 

In the other direction, there are known examples of nonamenable groups satisfying laws: 
for instance Adian showed that free Burnside groups of sufficiently large odd 
exponent are nonamenable. 
Such examples are, however, quite exotic. 
It is, for example, an open question whether every 
nonamenable \emph{residually finite} group is lawless \cite{CornMann}. 
Our Theorem \ref{RFLawlessThmIntro} shows that there is a tension 
between residual finiteness and lawlessness, in a quantitative sense: 
a group cannot be simultaneously ``very'' residually finite 
and ``very'' lawless. 

There has also been great interest in recent 
years in estimating the lengths of the shortest laws for \emph{finite} groups. 
One may view the following problem as the ``finitary'' version 
of the problem of estimating $\mathcal{A}_{\Gamma}$ for a fixed lawless group: 
given a sequence $(G_n)$ of finite groups which do not share a common law, 
can we estimate the asymptotic behaviour of the length of the shortest law 
$w_n \in F_2$ for $G_n$ as $n \rightarrow \infty$? 
This latter problem has been explored with $G_n$ taken to be 
the symmetric group $\Sym(n)$ \cite{KozTho}; 
a finite simple group of Lie type of bounded rank \cite{BraThoLie}, 
or the direct product of all groups of order at most $n$ 
(see Theorem \ref{BThomThm} below). 
We exploit analogies between the finite and infinite settings 
several times in this paper, most notably in Theorems \ref{EEffthm}; 
\ref{Grigmainthm} and \ref{RFLawlessThmIntro}. 

The paper is structured as follows. 
In Section \ref{PrelSection} we define $\mathcal{A}_{\Gamma} ^S$ 
and establish its basic properties. 
We also prove Theorem \ref{WordProbThm} in this Section. 
Sections \ref{BddSection}-\ref{ThompSect} are devoted, respectively, 
to the proofs of Theorems \ref{bddLGthm}-\ref{ThompMainThm}. 
In Section \ref{RFSect} we establish connections between 
$\mathcal{A}_{\Gamma} ^S$ and quantitative versions of residual finiteness. 
In Section \ref{CoeffSect} we define $\mathcal{M}_{\Gamma} ^S$ 
and prove Theorems \ref{MIFLBThm} and \ref{MIFfreeThm}. 
We conclude with a selection of open problems, 
speculations and directions for future research. 

\section{Preliminaries} \label{PrelSection}

Let $\lvert \cdot \rvert_S : \Gamma \rightarrow \mathbb{N}$ 
be the word-length function induced on $\Gamma$ by $S$. 
Denote by $B_S (l)$ the set of elements of $\Gamma$ 
of length at most $l$. 
For $F_k$ the free group of rank $k$, 
we denote by $\lvert \cdot \rvert$ the word length 
on $F_k$ induced by a fixed free basis 
$X_k = \lbrace x_1,\ldots x_k\rbrace$, 
and may write $B(l)$ for $B_{X_k} (l)$. 
The word $w \in F_k$ is a \emph{law} for $\Gamma$ 
if it is non-trivial in $F_k$ and lies in the kernel 
of every homomorphism $F_k \rightarrow \Gamma$. 
The set of \emph{non-laws} for $\Gamma$ in $F_k$ is: 
\begin{equation*}
N_k (\Gamma) = F_k \setminus \bigcap_{\pi} \ker(\pi)
\end{equation*}
where the intersection is taken over all homomorphisms 
$\pi : F_k \rightarrow \Gamma$. 
For $w \in N_k (\Gamma)$, 
we define the \emph{complexity} of $w$ in $\Gamma$ 
(with respect to $S$) to be: 
\begin{equation*}
\chi_{\Gamma} ^S (w) 
= \min \big\lbrace \sum_{i=1} ^k \lvert g_i \rvert_S 
: \mathbf{g} \in \Gamma^k , w(\mathbf{g}) \neq 1 \big\rbrace\text{.}
\end{equation*}

For $W \subseteq N_k (\Gamma)$, 
the \emph{$W$-lawlessness growth function} (or anarchy growth; 
pandemonium growth etc.) of $\Gamma$ with respect to $S$ 
is defined to be: 
\begin{equation*}
\mathcal{A}_{\Gamma,W} ^S (l) 
= \max \big\lbrace \chi_{\Gamma} ^S (w) 
: w \in W; \lvert w \rvert \leq l \big\rbrace\text{.}
\end{equation*}
If $N_k (\Gamma) = F_k \setminus \lbrace 1 \rbrace$, 
we denote $\mathcal{A}_{\Gamma,F_k \setminus \lbrace 1 \rbrace} ^S$ 
by  $\mathcal{A}_{\Gamma} ^S$ and refer to it simply 
as the \emph{lawlessness growth of $\Gamma$} 
(with respect to $S$). 

\begin{lem} \label{supersetlawsineq}
If $W^{\prime} \subseteq W \subseteq N_k (\Gamma)$, then 
$\mathcal{A}_{\Gamma,W} ^S (l) \geq 
\mathcal{A}_{\Gamma,W^{\prime}} ^S (l)$
for all $l \in \mathbb{N}$. 
\end{lem}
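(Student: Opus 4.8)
The plan is to observe that $\mathcal{A}_{\Gamma,W}^S(l)$ is, by its very definition, the maximum of the complexity function $\chi_{\Gamma}^S$ over a set of words that only shrinks when $W$ is replaced by a subset, so the asserted inequality is nothing more than monotonicity of $\max$ under passing to a subset.

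Concretely, I would fix $l \in \mathbb{N}$ and introduce the truncated sets
\begin{equation*}
W_l = \{ w \in W : \lvert w \rvert \leq l \}, \qquad W_l^{\prime} = \{ w \in W^{\prime} : \lvert w \rvert \leq l \}.
\end{equation*}
Since $W^{\prime} \subseteq W$, the inclusion $W_l^{\prime} \subseteq W_l$ is immediate, and both sets are contained in $N_k(\Gamma)$, so $\chi_{\Gamma}^S$ is defined at every element of $W_l$ (indeed, as $W_l \subseteq B(l)$ and $F_k$ is finitely generated, $W_l$ is a finite set, so the maximum is actually attained).

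Next I would invoke the elementary fact that for a function $f$ and sets $A \subseteq B$ on which $f$ is defined with $B$ finite, $\max_{a \in A} f(a) \leq \max_{b \in B} f(b)$, applied with $f = \chi_{\Gamma}^S$, $A = W_l^{\prime}$ and $B = W_l$; this yields precisely $\mathcal{A}_{\Gamma,W^{\prime}}^S(l) \leq \mathcal{A}_{\Gamma,W}^S(l)$. The sole point needing a word of care is the degenerate case $W_l^{\prime} = \emptyset$, where one adopts the convention that an empty maximum equals $0$; then $\mathcal{A}_{\Gamma,W^{\prime}}^S(l) = 0 \leq \mathcal{A}_{\Gamma,W}^S(l)$ trivially. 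As $l$ was arbitrary, this completes the argument.

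I do not expect a genuine obstacle here: the content is entirely bookkeeping around the definition of $\mathcal{A}_{\Gamma,W}^S$. The lemma is worth recording only because it is the mechanism by which lower bounds on $\mathcal{A}_{\Gamma}^S$ may be obtained from complexity estimates for a convenient subclass $W$ of words — most notably the class of power words used in the proof of Theorem \ref{Grigmainthm}.
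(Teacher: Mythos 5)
Your argument is correct and is exactly the intended one: the paper states this lemma without proof, treating it as immediate monotonicity of the maximum in the definition of $\mathcal{A}_{\Gamma,W}^S$ under passing to a subset of words, which is precisely your bookkeeping (including the harmless empty-maximum convention).
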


\begin{notn}
For nondecreasing functions 
$F_1 , F_2 : \mathbb{N} \rightarrow \mathbb{N}$ 
we write $F_1 \preceq F_2$ if there exists $K \in \mathbb{N}$ 
such that $F_1 (l) \leq K F_2 (Kl)$ for all $l \in \mathbb{N}$, 
and $F_1 \approx F_2$ if $F_1 \preceq F_2$ and $F_2 \preceq F_1$. 
It is clear that $\approx$ is an equivalence relation. 
\end{notn}

\begin{rmrk} \label{freerkrmrk}
\normalfont
In our notation $\mathcal{A}_{\Gamma} ^S$ for the lawlessness 
growth function, we leave implicit the rank $k$ of the free group 
$F_k$ in which our words lie. 
We shall assume $k$ to be an arbitrary (but fixed) integer at least $2$. 
It turns out that very little is lost by making this assumption. 
First suppose that $\Gamma$ has no law in $F_k$ and $1 \leq k^{\prime} \leq k$. 
Embed $F_{k^{\prime}}$ into $F_k$ by extending 
the set of free variables (with $\iota$ the inclusion map). 
Then for $w \in F_{k^{\prime}}$ and $g_1 , \ldots, g_{k^{\prime}} \in \Gamma$, 
\begin{equation} \label{cxtyrkeq}
w(g_1 , \ldots, g_{k^{\prime}})=\iota(w)(g_1 , \ldots, g_{k^{\prime}},e,\ldots,e),\text{ so }
\chi_{\Gamma} ^S (w) = \chi_{\Gamma} ^S \big( \iota(w)\big)\text{.}
\end{equation}
We apply Lemma \ref{supersetlawsineq} with 
$W^{\prime} = \iota(F_{k^{\prime}}) \setminus \lbrace 1 \rbrace$ 
and $W = F_k \setminus \lbrace 1 \rbrace$ 
so that by (\ref{cxtyrkeq}),
\begin{center}
$\mathcal{A}_{\Gamma,F_k\setminus \lbrace 1 \rbrace} ^S (l) 
\geq \mathcal{A}_{\Gamma,\iota(F_{k^{\prime}})\setminus \lbrace 1 \rbrace} ^S (l) 
= \mathcal{A}_{\Gamma,F_{k^{\prime}}\setminus \lbrace 1 \rbrace} ^S (l)$
\end{center}
for all $l \in \mathbb{N}$. 
Conversely suppose that $2 \leq k^{\prime} \leq k$ 
and that $\Gamma$ has no laws in $F_{k^{\prime}}$. 
Let $\phi : F_k \hookrightarrow F_{k^{\prime}}$ 
be an embedding, and for $1 \leq i \leq k$ 
write $\phi (x_i) = v_i \in F_{k^{\prime}}$. 
For $w \in F_k$ non-trivial, 
$\phi(w)=w(v_1,\ldots,v_k)$ is not a law for $\Gamma$, 
so there are $\mathbf{g} \in \Gamma^{k^{\prime}}$ 
with $\phi(w)(\mathbf{g})
=w(v_1(\mathbf{g}),\ldots,v_k(\mathbf{g})) \neq e$. 
Hence the $k$-tuple $(v_1(\mathbf{g}),\ldots,v_k(\mathbf{g}))$ 
witnesses that $w$ is not a law for $\Gamma$, 
and that $\chi_{\Gamma} ^S (w) \leq C \chi_{\Gamma} ^S \big( \phi(w) \big)$, 
where $C = \max \lbrace \lvert v_i\rvert : 1 \leq i \leq k \rbrace$. 
Since $\lvert \phi(w) \rvert \leq C \lvert w \rvert$ 
and $\lvert v_i (\mathbf{g}) \rvert_S 
\leq C (\lvert g_1 \rvert_S + \cdots + \lvert g_k \rvert_S)$,
\begin{equation}
\mathcal{A}_{\Gamma,F_k\setminus \lbrace 1 \rbrace} ^S (l) 
\leq C k \mathcal{A}_{\Gamma,\phi(F_k)\setminus \lbrace 1 \rbrace} ^S (Cl) 
\leq C k \mathcal{A}_{\Gamma,F_{k^{\prime}}\setminus \lbrace 1 \rbrace} ^S (Cl)
\end{equation}
for all $l\in \mathbb{N}$. 
Thus $\mathcal{A}_{\Gamma,F_k\setminus \lbrace 1 \rbrace} ^S 
\approx \mathcal{A}_{\Gamma,F_{k^{\prime}}\setminus \lbrace 1 \rbrace} ^S$. 
\end{rmrk}

\begin{lem} \label{subgrpcomplem}
Let $\Delta \leq \Gamma$ be a subgroup generated by a finite set $T$. 
Then there exists $C>0$ such that for all $1 \neq w \in F_k$, 
if $w$ is not a law for $\Delta$, then: 
\begin{equation} \label{subgrpcxineq}
\chi_{\Delta} ^T (w) \geq C \chi_{\Gamma} ^S (w)\text{.}
\end{equation} 
Thus, if $W \subseteq N_k (\Delta)$, then 
$\mathcal{A}_{\Delta,W} ^T (l) \geq C \mathcal{A}_{\Gamma,W} ^S (l)$
for all $l \in \mathbb{N}$. 
\end{lem}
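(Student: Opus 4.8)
The plan is to exhibit an explicit $C$ depending only on how the chosen generators $T$ of $\Delta$ are written in terms of $S$ --- essentially the reciprocal of the Lipschitz constant $D$ of the inclusion $(\Delta,T)\hookrightarrow(\Gamma,S)$. First I would record the basic compatibility: since $\Delta \leq \Gamma$, any tuple in $\Delta^k$ witnessing that $w$ fails to be a law for $\Delta$ is also a tuple in $\Gamma^k$ witnessing that $w$ fails to be a law for $\Gamma$. Hence $N_k(\Delta) \subseteq N_k(\Gamma)$, so that $\chi_{\Gamma}^S(w)$ is defined whenever $\chi_{\Delta}^T(w)$ is, and $\mathcal{A}_{\Gamma,W}^S$ is defined for every $W \subseteq N_k(\Delta)$; every quantity appearing in the statement therefore makes sense.

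Next, set $D = \max\{ \lvert t \rvert_S : t \in T \}$ and $C = 1/D$ (we may assume $\Delta \neq \lbrace 1 \rbrace$, as otherwise $N_k(\Delta) = \emptyset$ and there is nothing to prove, so $D \geq 1$). Fix $1 \neq w \in F_k$ not a law for $\Delta$ and choose $\mathbf{h} = (h_1, \ldots, h_k) \in \Delta^k$ with $w(\mathbf{h}) \neq 1$ and $\sum_{i=1}^k \lvert h_i \rvert_T = \chi_{\Delta}^T(w)$. Writing each $h_i$ as a geodesic word of length $\lvert h_i \rvert_T$ in $T \cup T^{-1}$ and then replacing each letter by a fixed word over $S$ of length at most $D$ representing it, one obtains $\lvert h_i \rvert_S \leq D \lvert h_i \rvert_T$. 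Since $w(\mathbf{h}) \neq 1$ holds already in $\Gamma$, the tuple $\mathbf{h}$ also certifies that $w$ is not a law for $\Gamma$, so $\chi_{\Gamma}^S(w) \leq \sum_{i=1}^k \lvert h_i \rvert_S \leq D \sum_{i=1}^k \lvert h_i \rvert_T = D \chi_{\Delta}^T(w)$; rearranging yields (\ref{subgrpcxineq}).

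For the displayed consequence, fix $l \in \mathbb{N}$. If there is no $w \in W$ with $\lvert w \rvert \leq l$ then both sides of the claimed inequality are $0$ and we are done; otherwise pick such a $w$ realizing $\mathcal{A}_{\Gamma,W}^S(l) = \chi_{\Gamma}^S(w)$. By (\ref{subgrpcxineq}), $\chi_{\Delta}^T(w) \geq C \chi_{\Gamma}^S(w) = C \mathcal{A}_{\Gamma,W}^S(l)$, and since this same $w$ lies in $W$ with $\lvert w \rvert \leq l$ it contributes to the maximum defining $\mathcal{A}_{\Delta,W}^T(l)$; hence $\mathcal{A}_{\Delta,W}^T(l) \geq C \mathcal{A}_{\Gamma,W}^S(l)$.

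I do not expect any real obstacle: morally the lemma just says that measuring complexity \emph{inside} a subgroup can only make it larger, up to the bounded distortion of the inclusion, because a witnessing tuple in $\Delta$ is automatically a witnessing tuple in $\Gamma$ but not conversely. The only things requiring a moment's care are getting the direction of the inequality right and noting at the outset that non-laws for $\Delta$ are non-laws for $\Gamma$, which is what keeps $\chi_{\Gamma}^S(w)$ and $\mathcal{A}_{\Gamma,W}^S$ meaningful throughout.
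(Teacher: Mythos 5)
Your proof is correct and follows essentially the same route as the paper: the paper's one-line argument is exactly the Lipschitz comparison $\lvert g \rvert_S \leq D\,\lvert g \rvert_T$ for $g \in \Delta$ with $D = \max_{t \in T}\lvert t\rvert_S$, which you have simply spelled out, together with the (correct) observation that a witnessing tuple in $\Delta$ witnesses the non-law in $\Gamma$. No gaps; your handling of the edge cases (trivial $\Delta$, empty set of short words in $W$) is fine and merely more explicit than the paper.
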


\begin{proof}
There exists $C > 0$ such that for all $g \in \Gamma$, 
$\lvert g \rvert_T \geq C \lvert g \rvert_S$. 
(\ref{subgrpcxineq}) follows immediately. 
\end{proof}

In view of Remark \ref{freerkrmrk} and Lemma \ref{subgrpcomplem}, 
we study $\mathcal{A}_{\Gamma} ^S$ up to $\approx$. 

\begin{coroll}
Let $S_1$ and $S_2$ be finite generating sets for $\Gamma$. 
Then $\mathcal{A}_{\Gamma} ^{S_1} \approx\mathcal{A}_{\Gamma} ^{S_2}$. 
\end{coroll}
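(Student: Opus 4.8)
The plan is to deduce this at once from Lemma \ref{subgrpcomplem}, applied in the degenerate case in which the subgroup is all of $\Gamma$. First I would apply Lemma \ref{subgrpcomplem} with $\Delta = \Gamma$, viewed as generated by the finite set $T = S_2$, and with ambient generating set $S = S_1$; this produces a constant $C_1 > 0$ with $\chi_\Gamma^{S_2}(w) \geq C_1 \chi_\Gamma^{S_1}(w)$ for every $w \in F_k$ which is not a law for $\Gamma$. The point to flag explicitly is that the set $N_k(\Gamma)$ of non-laws is defined solely via homomorphisms $F_k \to \Gamma$ and is therefore insensitive to the choice of generating set; hence (in the case $N_k(\Gamma) = F_k \setminus \lbrace 1 \rbrace$, which is precisely when $\mathcal{A}_\Gamma^S$ is defined) the lemma yields $\mathcal{A}_\Gamma^{S_2}(l) \geq C_1 \mathcal{A}_\Gamma^{S_1}(l)$ for all $l \in \mathbb{N}$.

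Interchanging $S_1$ and $S_2$ gives a constant $C_2 > 0$ with $\mathcal{A}_\Gamma^{S_1}(l) \geq C_2 \mathcal{A}_\Gamma^{S_2}(l)$ for all $l$. Taking $K$ to be any positive integer with $K \geq C_1^{-1}$ and $K \geq C_2^{-1}$, and using that the functions $\mathcal{A}_\Gamma^{S_i}$ are nondecreasing, we obtain $\mathcal{A}_\Gamma^{S_1}(l) \leq K \mathcal{A}_\Gamma^{S_2}(Kl)$ and $\mathcal{A}_\Gamma^{S_2}(l) \leq K \mathcal{A}_\Gamma^{S_1}(Kl)$, whence $\mathcal{A}_\Gamma^{S_1} \approx \mathcal{A}_\Gamma^{S_2}$. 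In fact the argument gives the slightly stronger conclusion that the two functions agree up to a multiplicative constant, with no rescaling of the argument needed.

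I do not expect any genuine obstacle here: the proof is just a symmetrisation of Lemma \ref{subgrpcomplem}, and the symmetry is available because $\Delta = \Gamma$ means the word metrics $\lvert \cdot \rvert_{S_1}$ and $\lvert \cdot \rvert_{S_2}$ are bi-Lipschitz equivalent on $\Gamma$ (each generator of one set having bounded length with respect to the other), which is exactly the comparison used inside the proof of Lemma \ref{subgrpcomplem}, now usable in both directions. The only thing requiring a line of care is recording that the maxima defining $\mathcal{A}_\Gamma^{S_1}$ and $\mathcal{A}_\Gamma^{S_2}$ range over the same set of words, so that the pointwise complexity comparison transfers directly to the growth functions.
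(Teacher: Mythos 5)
Your proof is correct and is exactly the intended argument: the paper states this corollary without proof immediately after Lemma \ref{subgrpcomplem}, precisely because it follows by applying that lemma with $\Delta=\Gamma$ in both directions, as you do. The symmetrisation and the observation that no rescaling of the argument is needed are both fine.
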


\begin{lem}
Let $\pi : \Gamma \twoheadrightarrow Q$ be an epimorphism of groups. 
Then $N_k (Q) \subseteq N_k (\Gamma)$ and, for any $w \in N_k (Q)$, 
\begin{equation}
\chi_{\Gamma} ^S (w) \leq \chi_Q ^{\pi(S)} (w)\text{.}
\end{equation}
Thus, if $W \subseteq N_k (Q)$, 
then $\mathcal{A}_{\Gamma,W} ^S (l) 
\leq \mathcal{A}_{Q,W} ^{\pi(S)} (l)$ for all $l$. 
\end{lem}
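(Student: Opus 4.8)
The plan is to use the freeness of $F_k$ together with the surjectivity of $\pi$ to lift both homomorphisms and geodesic words from $Q$ back up to $\Gamma$; nothing deeper is involved, so this will be a short verification rather than a genuine argument, and the only point needing care is that lifting does not increase word-length.

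First I would check $N_k (Q) \subseteq N_k (\Gamma)$. Given $w \in N_k (Q)$, fix a homomorphism $\rho : F_k \rightarrow Q$ with $\rho(w) \neq 1$. For each free generator $x_i$ choose $g_i \in \Gamma$ with $\pi(g_i) = \rho(x_i)$, which is possible since $\pi$ is onto, and let $\tilde{\rho} : F_k \rightarrow \Gamma$ be the unique homomorphism with $\tilde{\rho}(x_i) = g_i$. Then $\pi \circ \tilde{\rho} = \rho$, so $\pi\big(\tilde{\rho}(w)\big) = \rho(w) \neq 1$, whence $\tilde{\rho}(w) \neq 1$ and $w$ is not a law for $\Gamma$.

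For the complexity bound I would take a tuple $\mathbf{q} = (q_1,\ldots,q_k) \in Q^k$ realising $\chi_Q ^{\pi(S)} (w)$, so that $w(\mathbf{q}) \neq 1$ and $\sum_{i=1}^k \lvert q_i \rvert_{\pi(S)} = \chi_Q ^{\pi(S)} (w)$. Express each $q_i$ as a geodesic word in $\pi(S) \cup \pi(S)^{-1}$ and lift it letter by letter, replacing each occurrence of $\pi(s)^{\pm 1}$ by $s^{\pm 1}$; this produces $g_i \in \Gamma$ with $\pi(g_i) = q_i$ and $\lvert g_i \rvert_S \leq \lvert q_i \rvert_{\pi(S)}$. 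Since $\pi$ is a homomorphism, $\pi\big(w(\mathbf{g})\big) = w(\mathbf{q}) \neq 1$, so $w(\mathbf{g}) \neq 1$, and therefore $\chi_\Gamma ^S (w) \leq \sum_{i=1}^k \lvert g_i \rvert_S \leq \sum_{i=1}^k \lvert q_i \rvert_{\pi(S)} = \chi_Q ^{\pi(S)} (w)$.

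The final assertion is then immediate. If $W \subseteq N_k (Q)$, then by the first step $W \subseteq N_k (\Gamma)$, so $\mathcal{A}_{\Gamma,W} ^S$ is defined; and for each $w \in W$ with $\lvert w \rvert \leq l$ the complexity bound gives $\chi_\Gamma ^S (w) \leq \chi_Q ^{\pi(S)} (w) \leq \mathcal{A}_{Q,W} ^{\pi(S)} (l)$, so taking the maximum over such $w$ yields $\mathcal{A}_{\Gamma,W} ^S (l) \leq \mathcal{A}_{Q,W} ^{\pi(S)} (l)$. As noted, I do not expect a real obstacle; the one thing worth flagging explicitly is that $\pi(S)$ generates $Q$ (because $\pi$ is surjective and $S$ generates $\Gamma$), so that $\lvert \cdot \rvert_{\pi(S)}$ is a well-defined word-length, and that a letter-by-letter lift of a geodesic $\pi(S)$-word has length at most that of the original.
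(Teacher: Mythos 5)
Your proof is correct, and it is exactly the routine lifting argument the paper has in mind: the lemma is stated without proof there, being regarded as immediate from the surjectivity of $\pi$ (lift a homomorphism $F_k \rightarrow Q$ and a geodesic $\pi(S)$-word letter by letter, noting word-length does not increase). Nothing further is needed.
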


\begin{ex} \label{TorsionEx}
Suppose that $\Gamma$ is a torsion group. 
The \emph{torsion growth function} of $\Gamma$ (with respect to $S$) 
is defined to be: 
\begin{equation*}
\pi_{\Gamma} ^S (n) = \max \lbrace o(g) : \lvert g \rvert_S \leq n \rbrace
\end{equation*}
and was introduced in \cite{GrigArt}. 
In case $\Gamma$ is a $p$-group of infinite exponent, 
there is an intimate connection between torsion growth 
and lawlessness growth: let $W = \lbrace x^{p^k}:k\in\mathbb{N}\rbrace \subset F_2$ be the set of $p$-power words.  
Then for $m,n \in \mathbb{N}$, 
\begin{equation*}
\mathcal{A}_{\Gamma,W} ^S (p^m) \geq n+1
 \Leftrightarrow \pi_{\Gamma} ^S(n) \leq p^m \text{.}
\end{equation*}
Known finitely generated torsion $p$-groups of infinite exponent 
include various \emph{branch groups}, 
for which the torsion growth has been estimated. 
We discuss this further in Section \ref{BranchSect}.  
\end{ex}

Our proof of Theorems \ref{EEffthm} and \ref{MIFLBThm} 
uses the following Proposition, 
which is Lemma 2.2 from \cite{KozTho}. 
For $w \in F_k$ and $\Gamma$ any group, let: 
\begin{center}
$Z(w,\Gamma) = \lbrace \mathbf{g} \in \Gamma^k : w(\mathbf{g})=e \rbrace$
\end{center}
be the \emph{vanishing set} of $w$ in $\Gamma$. 

\begin{propn} \label{KozThoProp}
Let $k \geq 2$ and let $w_1 , \ldots , w_m \in F_k$ be nontrivial. 
There exists $w \in F_k$ nontrivial such that, 
for any group $\Gamma$, 
\begin{center}
$Z(w,\Gamma) \supseteq Z(w_1,\Gamma) \cup \cdots \cup Z(w_m,\Gamma)$
\end{center}
and $\lvert w \rvert \leq 16 m^2 \max_i \lvert w_i \rvert$. 
\end{propn}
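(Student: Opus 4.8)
The plan is to reduce to the two-word case and then combine via a balanced binary tree rather than iterating linearly. The basic building block is: given nontrivial $u, v \in F_k$, produce nontrivial $w_{u,v} \in F_k$ with $Z(w_{u,v}, \Gamma) \supseteq Z(u,\Gamma) \cup Z(v,\Gamma)$ for every group $\Gamma$, and with $\lvert w_{u,v}\rvert \leq 2\lvert u\rvert + 2\lvert v\rvert + 8$. I would take the commutator $w_{u,v} = [u, t^{-1}vt]$ (with the convention $[a,b] = a^{-1}b^{-1}ab$) for a suitably chosen short conjugator $t$. The vanishing property is immediate and holds in any group: if $u(\mathbf g) = e$ then $w_{u,v}(\mathbf g) = [e, \cdot] = e$, while if $v(\mathbf g) = e$ then $t^{-1}vt$ also evaluates to $e$ at $\mathbf g$, so again $w_{u,v}(\mathbf g) = [\cdot, e] = e$. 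The only content is choosing $t$ so that $w_{u,v}$ is nontrivial \emph{in $F_k$}.

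For that step one uses that two elements of a free group commute iff they are powers of a common element, and that the centralizer of a nontrivial element is the cyclic group generated by its maximal root. If $r$ denotes the root of $u$, then $[u, t^{-1}vt] = 1$ forces $v \in \langle trt^{-1}\rangle$, i.e.\ $v = tr^{b}t^{-1}$ for some $b \neq 0$; comparing roots shows that two distinct such conjugators $t_1, t_2$ would force $t_2^{-1}t_1$ to centralise $r$. A short computation then shows that among the conjugators $e, x_1, x_1^2, x_2$ (all of length $\leq 2$) at least one makes $[u, t^{-1}vt] \neq 1$: when $u \notin \langle x_1\rangle$ at most one of $e, x_1, x_1^2$ can be bad, and when $u \in \langle x_1\rangle$ at most one of $e, x_2$ can be bad. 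This is where the hypothesis $k \geq 2$ is used.

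Next I would arrange the $w_i$ at the leaves of a balanced binary tree. After padding the list with copies of $w_1$ — which only enlarges the union of vanishing sets and keeps all leaves nontrivial — we may assume $m = 2^d$ with $d = \lceil \log_2 m \rceil$; assign $w_1,\ldots,w_m$ to the leaves and to each internal node the word obtained by applying the building block to the words of its two children, giving words $W_\nu$. An induction on the height of $\nu$ shows $W_\nu \neq 1$ in $F_k$ and $Z(W_\nu,\Gamma) \supseteq \bigcup_{\lambda} Z(w_\lambda,\Gamma)$ over leaves $\lambda$ below $\nu$, so $w = W_{\mathrm{root}}$ has the desired vanishing set. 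For the length bound, write $\ell = \max_i \lvert w_i\rvert \geq 1$ and let $L_j$ bound the length of a word at height $j$; the building block gives $L_0 \leq \ell$ and $L_j \leq 4L_{j-1} + 8$, hence $L_j \leq 4^{j}(\ell + \tfrac{8}{3})$. Since $2^d \leq 2m$ we get $4^d \leq 4m^2$, so $\lvert w\rvert = L_d \leq 4m^2(\ell + \tfrac{8}{3}) \leq 16 m^2 \ell$ (for $m = 1$ take $w = w_1$).

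The step I expect to be the main obstacle is keeping the conjugators in the building block bounded in length: an unbounded conjugator would still give a nontrivial word with the right vanishing set, but it would wreck the quadratic bound once iterated $\log_2 m$ times, so the free-group analysis above must be done carefully enough to certify a uniform bound $\lvert t\rvert \leq 2$. A secondary point worth flagging is that the tree must be balanced — iterating the two-word construction in a chain would only yield a bound exponential in $m$, since each application of the building block roughly quadruples the length.
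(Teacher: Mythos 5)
Your proposal is correct, and in fact the paper does not prove this proposition at all: it is quoted verbatim as Lemma 2.2 of \cite{KozTho}, and your argument reconstructs essentially the same proof as that source — a two-word building block $[u,t^{-1}vt]$ with a conjugator $t$ of bounded length (nontriviality certified via cyclic centralizers and uniqueness of roots in $F_k$, using $k\geq 2$), combined along a balanced binary tree so that the depth $\lceil \log_2 m\rceil$ yields the factor $4^{d}\leq 4m^2$ and hence the stated bound $16m^2\max_i\lvert w_i\rvert$. Your two flagged points (uniformly bounded conjugators and balancing the tree) are exactly the places where the quadratic bound would otherwise be lost, and your treatment of both is sound.
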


\begin{coroll} \label{KozThoCoroll}
Let $\Gamma$ be a lawless group and let $k \geq 2$. 
Then for all $l \geq 1$, there exist $g_1 , \ldots , g_k \in \Gamma$ 
such that, for all $v \in F_k$ nontrivial with $\lvert v \rvert \leq l$, 
$v (g_1 , \ldots , g_k) \neq e$.   
\end{coroll}

\begin{proof}
Suppose not. Let $w_1 , \ldots , w_m \in F_k$ be a list of all nontrivial 
reduced words of length at most $l$, 
and let $w$ be as in the conclusion of Proposition \ref{KozThoProp}. 
Then for every $g_1 , \ldots , g_k \in \Gamma$, 
there exists $1 \leq i \leq m$ such that 
$(g_1 , \ldots , g_k) \in Z(w_i,\Gamma) \subseteq Z(w,\Gamma)$, 
so $w (g_1 , \ldots , g_k)$, and therefore $w$ is a law for $\Gamma$. 
\end{proof}

\begin{proof}[Proof of Theorem \ref{WordProbThm}]
Let $S$ be a finite generating set for $\Gamma$. 
We are given an algorithm \texttt{WORDPROBLEM} 
which takes as input an element of $F(S)$ and determines 
whether or not it evaluates to the identity element of $\Gamma$. 
We describe an algorithm \texttt{COMPLEX} which takes as input 
a word $w \in F_k$ and returns $\chi_{\Gamma} ^S(w)$. 
Applying \texttt{COMPLEX} to all $w$ of length $\leq n$, 
we compute $\mathcal{A}_{\Gamma} ^S (n)$. 

At the $m$th step, 
\texttt{COMPLEX} either verifies that $\chi_{\Gamma} ^S(w) \leq m$ 
and terminates, 
or verifies that $\chi_{\Gamma} ^S(w) > m$ and proceeds to the 
$(m+1)$th step. We thus establish the exact value of $\chi_{\Gamma} ^S(w)$. 

The $m$th step of \texttt{COMPLEX} runs as follows. 
Let $B$ be a list of all elements of 
$F(S)$ of length at most $m$ (a finite set). 
For each ordered $k$-tuple $\mathbf{v}$ of (not necessarily distinct) 
elements of $B$, apply \texttt{WORDPROBLEM} to $w(\mathbf{v})$. 
Note that $\chi_{\Gamma} ^S(w) \leq m$ iff for some $\mathbf{v}$, 
$w(\mathbf{v})$ is non-trivial in $\Gamma$. 
\end{proof}

\section{Bounded lawlessness growth} \label{BddSection}

We recall two basic facts about free groups. 
Let $F$ be a free group of finite rank. 

\begin{lem} \label{freenrmllem}
Let $1 \neq N \vartriangleleft F$. 
\begin{itemize}
\item[(i)] If $\lvert F:N \rvert < \infty$, 
then $\rk(N) - 1 = \lvert F:N \rvert \cdot (\rk(F) - 1)$; 
\item[(ii)] If $\lvert F:N \rvert = \infty$ then $\rk(N) = \infty$. 
\end{itemize}
In particular, if $F$ is nonabelian then $N$ is not cyclic. 
\end{lem}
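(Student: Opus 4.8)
The plan is to deduce both parts from the Nielsen--Schreier theorem (every subgroup of a free group is free). Write $r=\rk(F)$. If $r\le 1$ then $F$ is cyclic, every nontrivial subgroup has finite index, and both statements (and the ``in particular'' clause) are trivial, so I assume $r\ge 2$ throughout. It is convenient to use the topological model: $F=\pi_1(R)$ where $R$ is a wedge of $r$ circles, and a subgroup $N\le F$ corresponds to a connected covering $p:X_N\to R$ with $N\cong\pi_1(X_N)$, where $X_N$ is the Schreier coset graph of $N$ in $F$ (vertex set $N\backslash F$, every vertex of degree $2r$).

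For (i): if $d:=\lvert F:N\rvert<\infty$ then $X_N$ is a finite connected graph with $d$ vertices and $dr$ edges, so $\chi(X_N)=d-dr$. The fundamental group of a connected graph is free of rank $1-\chi$, so $\rk(N)=1-d(1-r)=1+d(r-1)$, which rearranges to the asserted identity. (Equivalently one runs the Reidemeister--Schreier rewriting process on a Schreier transversal; I would favour the topological computation for brevity.)

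For (ii) I would prove the contrapositive: if $1\ne N\vartriangleleft F$ has finite rank $m$, then $\lvert F:N\rvert<\infty$. By Nielsen--Schreier $N$ is free of rank $m\ge 1$. If $m=1$, say $N=\langle g\rangle$ with $g\ne 1$: normality gives $fgf^{-1}\in\langle g\rangle$ for all $f\in F$, and comparing cyclically reduced lengths forces $fgf^{-1}=g^{\pm1}$; the resulting sign character $F\to\{\pm1\}$ has kernel the centralizer $C_F(g)$, which is infinite cyclic (centralizers of nontrivial elements of free groups are cyclic), so $F$ would contain an infinite cyclic subgroup of index $\le 2$, contradicting part (i) (a finite-index subgroup of $F$ has rank $\ge r\ge 2$). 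Hence the case $m=1$ does not occur, and this already yields the ``in particular'' clause. For $m\ge 2$ I would invoke Marshall Hall's theorem: the finitely generated subgroup $N$ is a free factor of some finite-index subgroup $U\le F$, say $U=N*L$. Since $N\vartriangleleft F$ and $N\le U$ we have $N\vartriangleleft U$; but if $L$ were nontrivial, picking $1\ne a\in N$ and $1\ne b\in L$ the element $bab^{-1}$ has free-product syllable length $3$ and so lies outside $N$, contradicting normality. Thus $L=1$, $N=U$, and $\lvert F:N\rvert=\lvert F:U\rvert<\infty$. (One can avoid Marshall Hall: $F/N$ acts freely on $X_N$ with quotient $R$, the core of $X_N$ is canonical hence $F/N$-invariant and finite once $\rk N<\infty$, and an infinite group cannot act freely on a nonempty finite graph; so $N\ne 1$ forces the core to be infinite, i.e.\ $\rk N=\infty$.) Finally, the ``in particular'' clause also follows formally from (i) and (ii): a cyclic $N$ has finite rank, hence finite index by (ii), whence $\rk(N)-1=\lvert F:N\rvert(r-1)\ge 1$ by (i), contradicting $\rk(N)\le 1$.

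I expect the only genuinely nonformal input to be the case $m\ge 2$ of part (ii); everything else (Nielsen--Schreier, the Schreier index formula, cyclicity of centralizers in free groups, and the fact that a factor is not normal in a nontrivial free product) is standard, so the write-up is mainly a matter of assembling these cleanly and choosing between citing Marshall Hall's theorem and spelling out the self-contained Schreier-graph/core argument.
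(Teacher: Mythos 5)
Your proof is correct. Note that the paper does not actually prove this lemma: it is stated in Section \ref{BddSection} as one of two ``basic facts about free groups'' recalled without proof (the other being Lemma \ref{cycClem}), so there is no argument in the paper to compare yours against. Your assembly is the standard one and is sound: part (i) is exactly the Nielsen--Schreier/Schreier index formula (via the covering-graph Euler characteristic computation); for part (ii) your contrapositive works, with the rank-one case handled by the $fgf^{-1}=g^{\pm1}$ sign character and cyclicity of centralizers (which is precisely Lemma \ref{cycClem}, so you are consistent with the paper's toolkit), and the higher-rank case by Marshall Hall's theorem together with the observation that a proper free factor is never normal. The parenthetical core/deck-transformation alternative is also fine, though if you write it out you should include the small extra step that an infinite connected graph in which every edge lies on a cycle has infinite first Betti number. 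Your derivation of the ``in particular'' clause from (i) and (ii) is the intended use of the lemma in the paper's proof of Theorem \ref{bddLGthm}.
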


\begin{lem} \label{cycClem}
The centralizer of a non-trivial element of $F$ is a cyclic subgroup. 
\end{lem}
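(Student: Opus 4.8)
The plan is to deduce this from Lemma \ref{freenrmllem} together with the Nielsen--Schreier theorem (subgroups of free groups are free). Fix $1 \neq g \in F$ and put $C = C_F(g)$. The first step I would isolate is the auxiliary claim that \emph{a free group with non-trivial centre is infinite cyclic}. Indeed, let $H$ be free with $Z(H) \neq 1$. As a subgroup of $H$, the centre $Z(H)$ is itself free, hence --- being abelian --- it is trivial or infinite cyclic, so in particular cyclic. On the other hand $Z(H) \vartriangleleft H$ is a non-trivial normal subgroup, so if $H$ were nonabelian then Lemma \ref{freenrmllem} (final clause) would force $Z(H)$ to be non-cyclic, a contradiction. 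Therefore $H$ is abelian; but a free group is abelian only if it has rank at most $1$ (since $F_2$ is nonabelian), so $H$ is cyclic, and non-trivial because $Z(H) \neq 1$, i.e. $H \cong \mathbb{Z}$.

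With the claim in hand the lemma is immediate: by Nielsen--Schreier $C = C_F(g)$ is free, being a subgroup of $F$; and $g$ is a non-trivial element of $Z(C)$, so $Z(C) \neq 1$. Applying the claim to $H = C$ gives $C \cong \mathbb{Z}$, so the centralizer of $g$ is cyclic, as required.

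I do not expect any real obstacle here: all of the substance sits in the two standard facts invoked --- that subgroups of free groups are free, and (through Lemma \ref{freenrmllem}) that nonabelian free groups are centreless. If one wished to avoid Nielsen--Schreier entirely, the alternative is to argue combinatorially: write $g = u r^{n} u^{-1}$ with $r$ cyclically reduced and not a proper power and $n \geq 1$, note that $C_F(g) = u\, C_F(r^{n})\, u^{-1}$, and verify $C_F(r^{n}) = \langle r \rangle$ directly from the way cyclically reduced words behave under multiplication. That route is self-contained but trades the appeal to Nielsen--Schreier for a somewhat fiddly normal-form computation, so I would present the first argument.
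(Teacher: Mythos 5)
The paper does not prove this lemma at all: it is stated, alongside Lemma \ref{freenrmllem}, as a recalled standard fact about free groups (classically proved by the normal-form argument you sketch at the end, or via the fact that two commuting elements of a free group are powers of a common element). So your contribution is a genuine derivation where the paper simply cites folklore, and the derivation is essentially sound: $g$ is central in $C=C_F(g)$, $C$ is free by Nielsen--Schreier, and a free group with non-trivial centre must be abelian, hence of rank at most one. One small point deserves care. Lemma \ref{freenrmllem} is stated in the paper for a free group \emph{of finite rank}, whereas the subgroup $H=C_F(g)$ of $F$ need not be finitely generated a priori (subgroups of finite-rank free groups can have infinite rank), so invoking the final clause of that lemma for $H$ is, strictly speaking, outside its stated scope. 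This is easily repaired: if $H$ were nonabelian, pick $a,b\in H$ with $[a,b]\neq e$ and pass to the finitely generated free subgroup $H_0=\langle a,b,g\rangle\leq H$, which is nonabelian and has $g\in Z(H_0)\setminus\{e\}$; now $Z(H_0)$ is a non-trivial normal subgroup of the finite-rank nonabelian free group $H_0$, and it is cyclic (free and abelian), contradicting Lemma \ref{freenrmllem} as stated. With that adjustment your argument is complete, and it is arguably cleaner than the classical cyclically-reduced-word computation, at the cost of leaning on Nielsen--Schreier; the combinatorial route you mention is the self-contained alternative the literature usually gives.
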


\begin{proof}[Proof of Theorem \ref{bddLGthm}]
Suppose $a,b \in \Gamma$ with $\langle a,b \rangle \cong F_2$. 
Let $C>0$ with $\lvert a \rvert_S + \lvert b \rvert_S \leq C$. 
Then for all $1 \neq w \in F_2$, $w(a,b) \neq 1$, 
so $\chi_{\Gamma} ^S (w) \leq C$. 
Thus $\mathcal{A}_{\Gamma} ^S$ is bounded. 

Conversely suppose (for a contradiction) that $\mathcal{G}_{\Gamma} ^S$ 
is bounded but that $\Gamma$ has no subgroup isomorphic to $F_2$. 
Let $r \in \mathbb{N}$ be minimal such that there exist 
$(g_1,h_1),\ldots,(g_r,h_r) \in \Gamma \times \Gamma$ 
such that for all nontrivial $w(x,y) \in F_2$, 
there exists $1 \leq i \leq r$ such that $w(g_i,h_i) \neq 1$. 
Note that $r \geq 2$ (else $\langle g_1,h_1 \rangle \cong F_2$). 
By minimality of $r$, there exists $1 \neq w_r \in F_2$ 
such that: 
\begin{center}
$w_r (g_1,h_1),\ldots,w_r(g_{r-1},h_{r-1})=1$. 
\end{center}
Let $N = \lbrace w\in F_2 : w(g_1,h_1)=1 \rbrace$. 
Then $N \vartriangleleft F_2$ 
(it is the kernel of the homomorphism sending an ordered basis 
of $F_2$ to $g_1,h_1$) and $1 \neq N$ 
(for instance, $w_r \in N$). 
But $N \leq C_{F_2} (w_r)$, 
as otherwise any nontrivial word of the form $[w_r,v]$, 
for $v \in N$, would be a law for $\Gamma$ (contradicting hypothesis). 
By Lemma \ref{cycClem} $N$ is cyclic, 
contradicting Lemma \ref{freenrmllem}
\end{proof}

\section{Amenable groups of slow lawlessness growth}

In this Section we prove Theorem \ref{EEffthm}. 
We start with an elementary combinatorial fact. 

\begin{lem} \label{SparseFnctLem}
There exist increasing functions 
$p,q : \mathbb{N} \rightarrow \mathbb{N} \cup \lbrace 0 \rbrace$ 
such that $p(1)=q(1)=0$ and, for all $i,j,k,l \in \mathbb{N}$, 
\begin{itemize}
\item[(i)] For $r = p$ or $q$, 
if $r(j)-r(k)=r(l)-r(i)$ then either (a) $i=k$ and $j=l$, 
or (b) $j=k$ and $i=l$; 

\item[(ii)] If $q(j)-p(k)=q(l)-p(i)$ then $i=k$ and $j=l$. 

\end{itemize}
\end{lem}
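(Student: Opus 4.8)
The plan is to build $p$ and $q$ greedily, defining their values one index at a time, and at each step choosing the next value large enough to avoid creating any of the forbidden coincidences. Concretely, I would set $p(1)=q(1)=0$ and then, having defined $p(1),\ldots,p(n)$ and $q(1),\ldots,q(n)$, I would choose $p(n+1)$ and then $q(n+1)$ (in that order) to be strictly larger than everything already chosen and, crucially, to satisfy suitable inequalities that rule out the bad equations among indices $\leq n+1$. The key observation is that each condition in (i) and (ii), when it involves the newly added value, forces that value to be one of only finitely many ``forbidden'' integers (expressible as $\pm$ a difference of two previously chosen values, possibly shifted by another previously chosen value); so we can always pick $p(n+1)$, and then $q(n+1)$, outside this finite forbidden set.

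The main steps, in order, would be as follows. First, enumerate the ways the new index $n+1$ can appear in each equation. For condition (i) with $r=p$: the equation $p(j)-p(k)=p(l)-p(i)$ with $\max\{i,j,k,l\}=n+1$ and $\{i,j\}\neq\{k,l\}$ (so that neither (a) nor (b) holds); solving for $p(n+1)$ in terms of the three smaller values shows $p(n+1)$ must avoid finitely many integers. The same analysis handles $r=q$ once $q(n+1)$ is being chosen, noting that at that stage all $p$-values up to $n+1$ are already fixed but are irrelevant to the $r=q$ case. Second, for condition (ii), $q(j)-p(k)=q(l)-p(i)$ with $(i,j)\neq(k,l)$: here I would split according to whether the new index is an argument of $q$ or of $p$. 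When it is $q(n+1)$ being chosen, any occurrence of index $n+1$ as an argument of $q$ forces $q(n+1)$ into a finite forbidden set determined by the already-fixed values of $p$ (all of $p(1),\ldots,p(n+1)$) and of $q(1),\ldots,q(n)$; the only remaining case is when $n+1$ appears solely as an argument of $p$, but then $p(k)$ or $p(i)$ equals $p(n+1)$, which is the largest $p$-value, and one checks that the equation together with $(i,j)\neq(k,l)$ again pins $q$-differences among smaller indices, a situation already excluded at earlier stages — alternatively one simply also excludes these finitely many values when choosing $p(n+1)$. Third, after verifying that all forbidden sets are finite, conclude that a valid choice of $p(n+1)$ and then $q(n+1)$ always exists, and that $p,q$ are increasing by construction.

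A cleaner alternative, which I would mention and possibly prefer, is to exhibit explicit fast-growing formulas rather than argue greedily: for instance take $p(n)$ and $q(n)$ to be defined so that consecutive gaps grow super-exponentially (e.g.\ $p(n+1)-p(n)$ and $q(n+1)-q(n)$ larger than the sum of all previous values), which makes the ``$B_h$-set''-type uniqueness of representations of differences transparent; the interleaving condition (ii) is then arranged by additionally keeping all the $q$-gaps far from all the $p$-gaps (say by a parity or residue trick, or by nesting the two scales). Either way the content is the same.

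I expect the main obstacle to be purely bookkeeping: correctly enumerating \emph{all} placements of the new index across the (symmetric) equations in (i) and (ii) — including the degenerate placements where the new value coincides with one of the others — and being careful that in condition (ii) the roles of the $p$-arguments and $q$-arguments are not symmetric, so that the case split and the order in which $p(n+1)$ and $q(n+1)$ are chosen actually matters. Once that case analysis is laid out carefully, each individual case is a one-line ``avoid a finite set'' argument, and no real difficulty remains.
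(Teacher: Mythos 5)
Your greedy ``avoid a finite forbidden set'' construction is correct, but it is a genuinely different route from the paper's. The paper fixes the values explicitly by a recursive growth condition, interleaving $p(n)<q(n)<p(n+1)<q(n+1)$ with $p(n+1)\geq p(n)+q(n)+1$ and $q(n+1)\geq p(n+1)+q(n)+1$, and then verifies (i) and (ii) by a direct size comparison: any difference involving the newest value is strictly larger than any difference among older values (this is the two-case analysis for (ii)). Your primary argument instead observes that every bad instance of (i) or (ii) involving the new index either forces the new value to equal one of finitely many numbers determined by the already-chosen values (possibly with a coefficient $2$, as in $2p(n+1)=p(k)+p(l)$), or else the new value cancels and the residual equation among older, injectively chosen values forces the allowed conclusion; so any sufficiently generic large choice works. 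That buys robustness and dispenses with the inequality bookkeeping, at the cost of explicitness; the paper's explicit recursion additionally yields the (unused, but remarked-upon) fact that $p,q$ can be taken to grow at most exponentially, while for the application in Theorem \ref{EEffthm} only existence matters, so your version suffices. One small wobble: in your treatment of (ii) when the new index occurs only as an argument of $p$, the suggestion that the resulting relation is ``already excluded at earlier stages'' is not right, since $q(j)-q(l)=p(n+1)-p(i)$ still involves $p(n+1)$ and is not a relation among previously handled indices; but your fallback --- excluding the finitely many values $q(j)-q(l)+p(i)$ when choosing $p(n+1)$ --- is exactly the correct fix, and with it the case analysis closes. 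Your ``cleaner alternative'' (super-fast gaps) is essentially the paper's construction, except that the paper handles the mixed condition (ii) by the interleaved gap inequalities rather than by a parity or residue trick, which you left vague.
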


\begin{proof}
We construct $p$ and $q$ via a recursive process. 
We have $p(1)=q(1)=0$ and set $p(2)=1$, $q(2)=2$, 
so that (i) and (ii) hold for $i,j,k,l \in \lbrace 1,2\rbrace$. 
Supposing we have constructed 
$p(1)\leq q(1) < p(2)\leq q(2) < \ldots < p(n)\leq q(n)$ such that 
(i) and (ii) hold for all $1 \leq i,j,k,l \leq n$, 
we define $p(n+1)$ such that: 
\begin{equation} \label{SparseIneq1}
p(n+1) \geq p(n) + q(n) + 1
\end{equation}
and then define $q(n+1)$ by: 
\begin{equation} \label{SparseIneq2}
q(n+1) \geq p(n+1) + q(n) + 1
\end{equation}
so that $p(n)<q(n)<p(n+1)<q(n+1)$. 
We check that (i) and (ii) hold for all $1 \leq i,j,k,l \leq n+1$. 

For (i), consider the equation $p(j)-p(k)=p(l)-p(i)$. 
We may assume that $l \geq j \geq k$. 
Since $p$ is increasing, if $j=k$ then $i=l$ and if $l=j$ then $i=k$, 
so we may assume $l > j > k$. We claim that $p(j)-p(k) < p(l)-p(i)$. 
Subject to our assumptions, the maximal value of 
$p(j)-p(k)$ is attained for $j=l-1$ and $k=1$, 
while the minimal value of $p(l)-p(i)$ is attained for $i=l-1$, 
and $p(l-1)-p(1) < p(l)-p(l-1)$ by construction. 
The argument for $q$ is exactly the same. 

For (ii), consider the equation $q(j)-p(k)=q(l)-p(i)$. We may assume $l > j$ 
(as before, WLOG $l \geq j$ and if $l=j$ then $i=k$ as $p$ is increasing). 
We distinguish two cases. In the case that $l \geq i$, 
we have $j \geq k$ (since the quantity in the equation is non-negative) 
so that: 
\begin{equation*}
q(j)-p(k) \leq q (l-1) < q(l)-p(l) \leq q(l)-p(i)\text{.}
\end{equation*}
In the second case, $l < i$ so $j < k < i$ and: 
\begin{equation*}
q(l)-q(j) \leq q(l) < p(i)-p(i-1) \leq p(i)-p(k)
\end{equation*}
so that $q(l)-p(i) < q(j)-p(k)$. 
\end{proof}

\begin{rmrk}
Though we shall not need it in the sequel, it is not difficult to see 
that a minimal solution to the inequalities 
(\ref{SparseIneq1}) and (\ref{SparseIneq2}) 
yields functions $p$ and $q$ growing at most exponentially. 
\end{rmrk}

Fix a lawless elementary amenable group $\Delta$ 
(not necessarily finitely generated). 
For example, we may take $\Delta$ to be the direct sum of any sequence 
of finite groups which do not have a common law, such as 
the sequence of all finite symmetric groups \cite{KozTho}. 
By Corollary \ref{KozThoCoroll}, for each $l \geq 1$ 
we have $g_l,h_l \in \Delta$ such that for all $v \in F_2$ nontrivial, 
if $\lvert v \rvert \leq l$ then $v (g_l , h_l) \neq e$. 
Let $L : \mathbb{N} \rightarrow \mathbb{N}$ be an increasing 
function to be determined. 
Let $\hat{g} , \hat{h} : \mathbb{Z} \rightarrow \Delta$ be defined as follows: 
\begin{itemize}
\item[(i)] For each $n \in \mathbb{N}$, 
$\hat{g} (p(n)) = g_{L(n)}$ and $\hat{h} (q(n)) = h_{L(n)}$; 
\item[(ii)] $\hat{g}(m)=e$ for $m \notin \im (p)$; 
\item[(iii)] $\hat{h}(m)=e$ for $m \notin \im (q)$, 
\end{itemize}
where $p$ and $q$ are as in Lemma \ref{SparseFnctLem}. 
Let $G = \Delta \Wr \mathbb{Z} = \Delta^{\mathbb{Z}} \rtimes \mathbb{Z}$ 
be the \emph{unrestricted} wreath product of $\Delta$ and $\mathbb{Z}$, 
with the $\mathbb{Z}$-factor being generated by $t$. 
Let $S=S(L)=\lbrace \hat{g} , \hat{h} , t \rbrace$ 
and define $\Gamma = \Gamma(L) = \langle S(L) \rangle \leq G$. 
We claim that, for an appropriate choice of the function $L$, 
the group $\Gamma$ satisfies the conclusion of Theorem \ref{EEffthm}, 
the proof of which we divide between the next two results. 

\begin{thm}
For any $L$, $\Gamma$ is elementary amenable. 
\end{thm}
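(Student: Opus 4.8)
The plan is to show that $\Gamma$, being a subgroup of the unrestricted wreath product $G = \Delta \Wr \mathbb{Z}$, inherits elementary amenability from $G$. Since the class of elementary amenable groups is closed under taking subgroups, it suffices to prove that $G$ itself is elementary amenable. Recall that $\Delta$ was chosen to be an elementary amenable group (a direct sum of finite groups, which is locally finite, hence elementary amenable). The point is that the unrestricted wreath product $\Delta \Wr \mathbb{Z} = \Delta^{\mathbb{Z}} \rtimes \mathbb{Z}$ fits into a short exact sequence
\begin{equation*}
1 \longrightarrow \Delta^{\mathbb{Z}} \longrightarrow G \longrightarrow \mathbb{Z} \longrightarrow 1\text{,}
\end{equation*}
so $G$ is an extension of the direct power $\Delta^{\mathbb{Z}}$ by the (elementary amenable) group $\mathbb{Z}$. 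Since elementary amenability is closed under extensions, the whole burden reduces to showing that the unrestricted direct power $\Delta^{\mathbb{Z}}$ is elementary amenable.

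First I would recall the definition of the class $\mathbf{EG}$ of elementary amenable groups as the smallest class containing all finite and all abelian groups and closed under the four operations: taking subgroups, taking quotients, taking extensions, and taking \emph{directed unions}. The potential subtlety is that $\Delta^{\mathbb{Z}}$ is an \emph{uncountable} direct product, and a priori arbitrary direct products of elementary amenable groups need not be elementary amenable (indeed infinite direct products of finite groups can contain free subgroups). However, in our situation $\Delta$ is not merely elementary amenable but \emph{locally finite}: it is a direct sum of finite groups. A direct product of locally finite groups need not be locally finite, but it is \emph{locally} a product of finite groups in a controlled way. Concretely, I would argue as follows: write $\Delta = \bigoplus_{j} \Delta_j$ with each $\Delta_j$ finite. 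Then $\Delta^{\mathbb{Z}}$ is a subgroup of $\prod_{(i,j)} \Delta_j$, a product of finite groups. So it suffices to show that an arbitrary direct product of finite groups is elementary amenable — but this is \emph{false} in general, so this crude approach does not work and I must use more structure.

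The hard part will be handling the unrestricted (full) direct power correctly, and I expect the cleanest route is to avoid it entirely by observing that $\Gamma$, the actual object of interest, is generated by only three elements $\hat g, \hat h, t$, and the coordinates of $\hat g$ and $\hat h$ take values in the subgroups $g_{L(n)}, h_{L(n)}$ ranging over $\Delta$. Thus $\Gamma$ is contained in $D \Wr \mathbb{Z}$ where $D = \bigoplus_{\ell} \langle g_\ell, h_\ell \rangle \leq \Delta$, but more usefully, $\Gamma \cap \Delta^{\mathbb{Z}}$ is a finitely generated subgroup of $\Delta^{\mathbb{Z}}$ (being generated by the finitely many $t$-conjugates needed)—no, that is still not finitely generated. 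The genuinely robust fact I would invoke is: if $\Delta \in \mathbf{EG}$, then the \emph{restricted} wreath product $\Delta \Wr \mathbb{Z}$ is in $\mathbf{EG}$ (as it is $\Delta^{(\mathbb{Z})} \rtimes \mathbb{Z}$, an extension of a directed union $\bigcup_{F \text{ finite}} \Delta^F$ of finite products of copies of $\Delta$, each of which is in $\mathbf{EG}$, by $\mathbb{Z}$), and then show $\Gamma$ embeds in the restricted wreath product. But $\Gamma$ as defined sits in the \emph{unrestricted} product because $\hat g$ and $\hat h$ have infinite support. So I would instead directly verify that $\Delta^{\mathbb{Z}}$ is elementary amenable by exhibiting it as a directed union: $\Delta^{\mathbb{Z}} = \bigcup_{n} \Delta^{[-n,n]} \times \{e\}^{\mathbb{Z} \setminus [-n,n]}$? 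That union is not all of $\Delta^{\mathbb{Z}}$ either. The correct statement, which I believe is what the author intends, is simply that since $\Delta$ is locally finite and elementary amenable of a very concrete kind, one cites the known theorem (e.g. from Chou's work on elementary amenable groups) that $\Delta \Wr \mathbb{Z}$ is elementary amenable whenever $\Delta$ is; I would present the extension argument $1 \to \Delta^{\mathbb{Z}} \to G \to \mathbb{Z} \to 1$ together with the observation that $\Delta^{\mathbb{Z}}$, being a subgroup of a product of finite groups \emph{on which $\mathbb{Z}$ acts with the appropriate structure}, lies in $\mathbf{EG}$ via the closure of $\mathbf{EG}$ under directed unions of its "layered" approximations, and flag this verification as the one point requiring care.
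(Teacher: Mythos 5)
Your central reduction --- that it suffices to show the ambient unrestricted wreath product $G = \Delta \Wr \mathbb{Z}$ is elementary amenable --- fails, and in fact the statement you are trying to reduce to is false. Since $\Delta$ is lawless, the unrestricted power $\Delta^{\mathbb{Z}}$ contains a nonabelian free subgroup: taking the witnesses $g_l, h_l \in \Delta$ from Corollary \ref{KozThoCoroll} and forming the elements of $\Delta^{\mathbb{Z}}$ whose $l$th coordinates are $g_l$ and $h_l$ respectively (and trivial elsewhere), any nontrivial reduced word survives in some coordinate, so these two elements generate a copy of $F_2$. Hence $\Delta^{\mathbb{Z}}$, and therefore $G$, is not even amenable, let alone elementary amenable, and no closure property of $\mathbf{EG}$ under subgroups and extensions can rescue the argument. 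You noticed this danger yourself ("infinite direct products of finite groups can contain free subgroups"), but the alternatives you sketch do not repair it: $\Gamma$ does not embed in the restricted wreath product (as you observe, $\hat g$ and $\hat h$ have infinite support), the "directed union of layered approximations" is not all of $\Delta^{\mathbb{Z}}$, and the theorem you propose to cite concerns the restricted wreath product only. The proposal ends by flagging the decisive step as "the one point requiring care," which is precisely the point the proof must supply.

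The missing idea is that elementary amenability of $\Gamma$ cannot be inherited from any ambient group; it has to be extracted from the specific, sparse structure of the generators $\hat g$ and $\hat h$, i.e.\ from Lemma \ref{SparseFnctLem}, which your argument never uses. The paper's proof sets $N = \Gamma \cap \Delta^{\mathbb{Z}}$, so that $\Gamma/N \cong \mathbb{Z}$ and it suffices to handle $N$; since $N/[N,N]$ is countable abelian, it further suffices to show $[N,N]$ is elementary amenable. Now $N$ is generated by the conjugates $\hat g^{t^n}, \hat h^{t^n}$, and the support of a commutator $[\hat f_1, \hat f_2]$ lies in $\supp(\hat f_1) \cap \supp(\hat f_2)$; the defining properties (i) and (ii) of $p$ and $q$ in Lemma \ref{SparseFnctLem} force any two distinct generators to have supports meeting in at most one point. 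Hence every such commutator, and therefore all of $[N,N]$, lies in the restricted direct sum $\bigoplus_{\mathbb{Z}} \Delta$, which is elementary amenable. That support-intersection computation is the real content of the theorem, and it is exactly what your proposal leaves unproved.
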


\begin{proof}
Let $N = \Gamma \cap \Delta^{\mathbb{Z}}$. 
Then $N \vartriangleleft \Gamma$ with $\Gamma/N \cong \mathbb{Z}$, 
so it suffices to check that $N$ is elementary amenable. 
$N/[N,N]$ is a countable abelian group, so it suffices to check that 
$[N,N]$ is elementary amenable. 

The group $N$ is generated by 
$X = \lbrace \hat{g}^{t^n} , \hat{h}^{t^n} : n\in\mathbb{Z} \rbrace$, 
so $[N,N]$ is normally generated in $N$ by 
$Y=\lbrace [\hat{f}_1 , \hat{f}_2] : \hat{f}_1 , \hat{f}_2 \in X \rbrace$. 
If $Y \subseteq \bigoplus_{\mathbb{Z}} \Delta$, then 
$[N,N] \leq \bigoplus_{\mathbb{Z}} \Delta$ 
(since $\bigoplus_{\mathbb{Z}} \Delta \vartriangleleft \Delta^{\mathbb{Z}}$), 
so that $[N,N]$ is elementary amenable 
(being a subgroup of the elementary amenable group $\bigoplus_{\mathbb{Z}} \Delta$). 

We therefore claim that $Y \subseteq \bigoplus_{\mathbb{Z}} \Delta$. 
Recall that the \emph{support} of $\hat{f} \in \Delta^{\mathbb{Z}}$ 
is $\supp (\hat{f}) = \lbrace n\in\mathbb{Z} : \hat{f}(n)\neq e \rbrace$, 
so that $\bigoplus_{\mathbb{Z}} \Delta$ is precisely the group 
of finite-support elements of $\Delta^{\mathbb{Z}}$. 
We assert that: 
\begin{itemize}
\item[(a)] for every $\hat{f}_1 , \hat{f}_2 \in \Delta^{\mathbb{Z}}$, 
$\supp([\hat{f}_1 , \hat{f}_2]) \subseteq \supp (\hat{f}_1) \cap \supp (\hat{f}_2)$; 
\item[(b)] For all pairs of distinct elements $\hat{f}_1 , \hat{f}_2 \in X$, 
$\lvert \supp (\hat{f}_1) \cap \supp (\hat{f}_2) \rvert \leq 1$, 
\end{itemize}
whence the desired claim. 
Observation (a) is clear, and (b) follows from Lemma \ref{SparseFnctLem}: 
$\supp (\hat{g}) \subseteq \im (p)$ and $\supp (\hat{h}) \subseteq \im (q)$, 
so a point in $\supp (\hat{g}^{t^m}) \cap \supp (\hat{h}^{t^n})$ 
is $p(i) + m = q(l) +n$ for some $i,l \in \mathbb{N}$. 
A second point in the intersection would yield a second pair $j,k \in \mathbb{N}$ 
satisfying $p(k) + m = q(j) +n$, so that $q(j)-p(k)=q(l)-p(i)$, 
contradicting Lemma \ref{SparseFnctLem} (ii). 
We argue similarly for $\supp (\hat{g}^{t^m}) \cap \supp (\hat{g}^{t^n})$ 
and $\supp (\hat{h}^{t^m}) \cap \supp (\hat{h}^{t^n})$ 
for $m \neq n$, using Lemma \ref{SparseFnctLem} (i). 
\end{proof}

\begin{thm}
For every unbounded nondecreasing function $f : \mathbb{N} \rightarrow \mathbb{N}$ 
with $f(1) \geq 2$, 
there exists $L$ such that for all $n \in \mathbb{N}$, 
$\mathcal{A}_{\Gamma(L)} ^{S(L)}(n) \leq f(n)$. 
\end{thm}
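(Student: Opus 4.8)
The plan is to bound $\chi_{\Gamma}^{S}(w)$ for every nontrivial $w \in F_2$ with $\lvert w \rvert \le n$ by a quantity roughly equal to $2q(i(n))$, where $i(n)$ is a slowly-growing index depending on $L$, and then to choose $L$ to grow fast enough that $2q(i(n)) + 2 \le f(n)$ for all $n$. I give the argument for $k = 2$, the case around which the construction is built; for $k \ge 3$ one replaces Lemma~\ref{SparseFnctLem} by its evident analogue for $k$ functions $p_1,\dots,p_k$ (such that the shifts of the images of any two of them meet in at most one point) and uses $k$ generators $\hat{g}_1,\dots,\hat{g}_k$ supported on $\im(p_1),\dots,\im(p_k)$, aligning all $k$ good values at a common coordinate by suitable powers of $t$.

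The key observation is that $\Delta^{\mathbb{Z}}$ is a direct product, so for $a,b \in \Delta^{\mathbb{Z}} \le G$ the word $w$ evaluates coordinatewise: $w(a,b)(x) = w\big(a(x),b(x)\big)$ for every $x \in \mathbb{Z}$. Hence, to witness that $w$ is not a law, it suffices to find one coordinate $x$ together with elements $a,b \in \Gamma \cap \Delta^{\mathbb{Z}}$ of small $S$-length such that $w\big(a(x),b(x)\big) \ne e$ in $\Delta$. Fix $n$ and a nontrivial $w$ with $\lvert w \rvert \le n$, and set $i = i(n) := \min\{\, j \ge 1 : L(j) \ge n \,\}$ — this is defined for every $n$ once $L$ is unbounded, which I shall arrange. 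Conjugating $\hat{g}$ by the appropriate power of $t$ yields $a \in \Gamma$ with $a(q(i)) = \hat{g}(p(i)) = g_{L(i)}$ and $\lvert a \rvert_S \le 2\big(q(i) - p(i)\big) + 1$, while $b := \hat{h} \in \Gamma$ has $b(q(i)) = \hat{h}(q(i)) = h_{L(i)}$. Thus $w(a,b)(q(i)) = w\big(g_{L(i)}, h_{L(i)}\big)$, which is nontrivial since $\lvert w \rvert \le n \le L(i)$ and, by Corollary~\ref{KozThoCoroll}, $g_l$ and $h_l$ were chosen to witness every nontrivial word of length at most $l$. So $\Gamma$ is lawless — hence $\mathcal{A}_{\Gamma}^{S}$ is defined — and $\chi_{\Gamma}^{S}(w) \le \lvert a \rvert_S + \lvert b \rvert_S \le 2q(i) + 2$. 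As the same pair $(a,b)$ works for every nontrivial $w$ of length at most $n$, we obtain $\mathcal{A}_{\Gamma}^{S}(n) \le 2q(i(n)) + 2$.

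It remains to choose the increasing function $L$ so that $2q(i(n)) + 2 \le f(n)$ for all $n$. Since $i(n) = i$ precisely when $n \in \{L(i-1)+1,\dots,L(i)\}$ (with $L(0) := 0$) and $f$ is nondecreasing, it is enough to ensure $f(L(i-1)+1) \ge 2q(i) + 2$ for every $i \ge 1$. For $i = 1$ this reads $f(1) \ge 2q(1) + 2 = 2$, which is exactly the hypothesis; for $i \ge 2$ it is a condition on $L(i-1)$, satisfiable because $f$ is unbounded. Concretely, put $L(0) := 0$ and define $L$ recursively for $i \ge 1$ by
\begin{equation*}
L(i) := \max\Big( L(i-1) + 1,\ \min\{\, m \in \mathbb{N} : f(m+1) \ge 2q(i+1) + 2 \,\} \Big).
\end{equation*}
This makes $L$ strictly increasing, hence unbounded (so $i(n)$ is always defined), and it guarantees all the inequalities above; therefore, for every $n$, writing $i = i(n)$ we get $f(n) \ge f(L(i-1)+1) \ge 2q(i) + 2 \ge \mathcal{A}_{\Gamma}^{S}(n)$, as required.

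I expect the only real obstacle to be the interplay between $q$ and $L$: the cost $2q(i)$ of realizing the ``good'' pair $\big(g_{L(i)}, h_{L(i)}\big)$ at a single coordinate comes from the conjugating exponent $q(i) - p(i)$, and $q$ is fixed in advance by Lemma~\ref{SparseFnctLem} with no growth control, so one has to force $L$ to outrun it — which is exactly what the recursion does. The remaining points (that $\hat{g}^{t^{q(i)-p(i)}}$ and $\hat{h}$ lie in $\Gamma \cap \Delta^{\mathbb{Z}}$, that $w$ acts coordinatewise there, and that $\lvert w \rvert \le L(i(n))$) are routine.
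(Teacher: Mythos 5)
Your proposal is correct and follows essentially the same route as the paper: the same witnesses $\hat{h}$ and the conjugate $\hat{g}^{t^{q(i)-p(i)}}$, the same coordinatewise evaluation at the coordinate $q(i)$ giving $w(g_{L(i)},h_{L(i)})\neq e$, and the same strategy of choosing $L$ recursively so that $f$ outruns the cost $\approx 2q$ (your bookkeeping via $i(n)$ and $f(L(i-1)+1)\geq 2q(i)+2$ matches the paper's interpolation $L(m)\leq n\leq L(m+1)$, with the hypothesis $f(1)\geq 2$ covering the base case exactly as in the paper).
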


\begin{proof}
Let $m\in \mathbb{N}$ and let $w \in F_2$ be nontrivial 
with $\lvert w \rvert \leq L(m)$. 
Then: 
\begin{equation*}
 w\big( \hat{g}^{t^{q(m)-p(m)}},\hat{h} \big)\big( q(m) \big) 
 = w( g_{L(m)},h_{L(m)} ) \neq e
\end{equation*} 
so that $w$ is not a law for $\Gamma (L)$ and:
\begin{equation} \label{WreathCxtyEqn}
\chi_{\Gamma(L)} ^{S(L)} (w) \leq \big\lvert \hat{g}^{t^{q(m)-p(m)}} \big\rvert_{S(L)} + \big\lvert \hat{h} \big\rvert_{S(L)}
\leq 2 \big( q(m)-p(m) + 1 \big)\text{.}
\end{equation} 
We require $L(m) \in \mathbb{N}$ 
sufficiently large that $f(L(m)) \geq 2 \big( q(m+1)-p(m+1) + 1 \big)$ 
(possible since $f$ is unbounded). 

Now let $n \in \mathbb{N}$. 
Suppose first that $n \geq L(1)$. 
Let $m \in \mathbb{N}$ with $L(m)\leq n \leq L(m+1)$. Then: 
\begin{align*}
\mathcal{A}_{\Gamma(L)} ^{S(L)} (n) \leq \mathcal{A}_{\Gamma(L)} ^{S(L)} (n) 
& \leq \mathcal{A}_{\Gamma(L)} ^{S(L)} \big(L(m+1)\big) \\
& \leq 2 \big( q(m+1)-p(m+1) + 1 \big) \text{ (by (\ref{WreathCxtyEqn}))}\\
& \leq f\big(L(m)\big) \\
& \leq f(n). 
\end{align*}
On the other hand, if $n \leq L(1)$ then for any $w \in F_2$ nontrivial 
with $\lvert w \rvert \leq n$, $w (\hat{g},\hat{h}) \neq e$, 
so $\chi_{\Gamma(L)} ^{S(L)} (w) \leq 2$ 
and $\mathcal{A}_{\Gamma(L)} ^{S(L)} (n) \leq 2$. 
\end{proof}

\section{Golod-Shafarevich groups of linear lawlessness growth}

The goal of this Section is to prove Theorem \ref{GSmainthm}. 
Throughout this Section $p$ is an arbitrary (but fixed) prime number. 
We follow the treatment of Golod-Shafaverich groups from Chapter 3 of \cite{Ersh}. 
For $\Gamma$ an abstract group, denote by $\hat{\Gamma}_{(p)}$ 
the \emph{pro-$p$ completion} of $\Gamma$. 
Let $\mathbb{F}_p \lAngle U_k \rAngle$ be the 
algebra of power-series in the non-commuting variables 
$U_k = \lbrace u_1 , \ldots , u_k \rbrace$ over $\mathbb{F}_p$. 
For $f \in \mathbb{F}_p \lAngle U_k \rAngle$, 
the \emph{degree} $\deg(f)$ of $f$ is the minimal length 
of a monomial occuring in $f$ with nonzero coefficient. 
Let $\hat{F}_k$ be a free pro-$p$ group on the finite set 
$X_k = \lbrace x_1 , \ldots , x_k \rbrace$. 
There is a continuous monomorphism (the \emph{Magnus embedding}) 
$\mu :\hat{F}_k \hookrightarrow\mathbb{F}_p \lAngle U_k \rAngle^{\ast}$ 
extending $x_i \mapsto 1 + u_i$ \cite{Ersh}. 

\begin{defn} \label{GSDefn}
\begin{itemize}
\item[(i)] A pro-$p$ group $G$ is \emph{Golod-Shafarevich} 
if it admits a pro-$p$ presentation $G = \langle X_k \mid R \rangle$ 
such that there exists $\tau \in (0,1)$ with: 
\begin{equation} \label{GSineq}
1- k \tau + \sum_{r \in R} \tau^{D(r)} < 0,
\end{equation}
where for $r \in \hat{F}$, 
$D(r) = \deg(\mu(r)-1)$ is the \emph{degree} of $r$; 
\item[(ii)] An abstract group $\Gamma$ is 
Golod-Shafarevich if $\hat{\Gamma}_{(p)}$ is a Golod-Shafarevich 
pro-$p$ group. 
\end{itemize}
\end{defn}

All the relations in the presentations we construct will be $p$-powers, 
and estimating the degrees of these is easily done, 
direct from the definition of the function $D$. 

\begin{lem} \label{degpowerlem}
For all $w \in \hat{F}_k$, $D(w^p) = p D(w)$. 
\end{lem}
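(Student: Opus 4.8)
The plan is to reduce the identity $D(w^p) = pD(w)$ to the ``freshman's dream'' in characteristic $p$ together with additivity of $\deg$ under multiplication in $\mathbb{F}_p \lAngle U_k \rAngle$.

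First I would invoke that $\mu$ is a homomorphism into the unit group $\mathbb{F}_p \lAngle U_k \rAngle^{\ast}$, so that $\mu(w^p) = \mu(w)^p$. Writing $f = \mu(w) - 1$, by definition $D(w) = \deg(f)$; one may assume $w \neq 1$, so that $f$ is a nonzero element of the augmentation ideal and $d := \deg(f) \geq 1$ (the case $w = 1$ being trivial, under the convention $\deg(0) = \infty$). Since $1$ is central in $\mathbb{F}_p \lAngle U_k \rAngle$ and $\mathbb{F}_p$ has characteristic $p$, the binomial theorem yields
\[
\mu(w)^p = (1+f)^p = \sum_{j=0}^{p} \binom{p}{j} f^j = 1 + f^p,
\]
because $\binom{p}{j} \equiv 0 \pmod{p}$ for $0 < j < p$. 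Hence $\mu(w^p) - 1 = f^p$, so that $D(w^p) = \deg(f^p)$.

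It then remains to show $\deg(f^p) = pd$. Let $f_d$ be the degree-$d$ homogeneous component of $f$, which is nonzero by the choice of $d$. The associated graded ring of $\mathbb{F}_p \lAngle U_k \rAngle$ for the degree filtration is the free associative $\mathbb{F}_p$-algebra $\mathbb{F}_p \langle U_k \rangle$, which is an integral domain; in particular $f_d^{\,p} \neq 0$. Since all homogeneous components of $f^p$ in degrees $< pd$ vanish while its degree-$pd$ component equals $f_d^{\,p}$, we obtain $\deg(f^p) = pd$, and therefore $D(w^p) = pd = p D(w)$. I do not expect a genuine obstacle here: the only step meriting a line of justification is that $\mathbb{F}_p \langle U_k \rangle$ has no zero divisors, which is classical, and everything else follows directly from the definitions of $\mu$ and $D$.
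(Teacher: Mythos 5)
Your proof is correct and is exactly the direct argument the paper has in mind (the lemma is stated there without proof, as following "direct from the definition of $D$"): writing $\mu(w)=1+f$, the characteristic-$p$ binomial identity gives $\mu(w^p)-1=f^p$, and $\deg(f^p)=p\deg(f)$ because the lowest homogeneous component of $f^p$ is $f_d^{\,p}$, nonzero since the free associative algebra $\mathbb{F}_p\langle U_k\rangle$ is a domain. No gaps; the only steps needing justification (centrality of $1$ for the binomial theorem, and the domain property) are exactly the ones you flag and handle.
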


The next, basic Lemma immediately implies that if $G = \langle X_k \mid R \rangle$ 
is an \emph{abstract} group presentation satisfying (\ref{GSineq}), 
then $G$ is an abstract Golod-Shafarevich group. 

\begin{lem}
Let $\Gamma$ be an abstract group, 
and suppose $\langle S \mid R \rangle$ 
is an abstract group presentation for $\Gamma$. 
Then $\langle S \mid R \rangle$ is a pro-$p$ 
presentation for $\hat{\Gamma}_{(p)}$. 
\end{lem}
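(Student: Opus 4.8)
The plan is to identify both sides via their universal property among pro-$p$ groups. Write $\Gamma = F/N$, where $F = F(S)$ is the abstract free group on $S$ and $N$ is the normal closure of $R$ in $F$. Let $\hat{F}_{(p)}$ be the free pro-$p$ group on the finite set $S$, let $\iota : F \hookrightarrow \hat{F}_{(p)}$ be the canonical dense homomorphism, and let $\overline{N} \vartriangleleft \hat{F}_{(p)}$ be the closed normal subgroup topologically generated by $\iota(R)$; by definition the pro-$p$ group presented by $\langle S \mid R \rangle$ is $G := \hat{F}_{(p)} / \overline{N}$, and we must show $G \cong \hat{\Gamma}_{(p)}$ compatibly with the canonical maps from $\Gamma$.

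First I would record two standard universal properties, for an arbitrary pro-$p$ group $P$. (a) Precomposition with the canonical homomorphism $\Gamma \to \hat{\Gamma}_{(p)}$ induces a bijection $\mathrm{Hom}_{\mathrm{cont}}(\hat{\Gamma}_{(p)}, P) \to \mathrm{Hom}(\Gamma, P)$, natural in $P$. (b) Since $S$ is finite, precomposition with $\iota$ induces a bijection $\mathrm{Hom}_{\mathrm{cont}}(\hat{F}_{(p)}, P) \to \mathrm{Hom}(F, P) = \mathrm{Map}(S,P)$, again natural in $P$.

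Next I would pass to quotients. A continuous homomorphism $\phi : \hat{F}_{(p)} \to P$ factors through $G$ iff $\phi(\overline{N}) = 1$; as $P$ is Hausdorff, $\ker \phi$ is closed, so this holds iff $\phi(\iota(R)) = 1$, i.e. iff the homomorphism $F \to P$ corresponding to $\phi$ under (b) kills $R$, i.e. factors through $\Gamma$. Combining this with (a) yields a bijection $\mathrm{Hom}_{\mathrm{cont}}(G, P) \cong \mathrm{Hom}(\Gamma, P) \cong \mathrm{Hom}_{\mathrm{cont}}(\hat{\Gamma}_{(p)}, P)$, natural in $P$. By the Yoneda lemma this natural isomorphism of representable functors on the category of pro-$p$ groups is induced by a unique isomorphism of pro-$p$ groups between $G$ and $\hat{\Gamma}_{(p)}$; tracing through the construction shows it intertwines the canonical maps from $\Gamma$, so $\langle S \mid R \rangle$ is a pro-$p$ presentation of $\hat{\Gamma}_{(p)}$.

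The only point requiring care — the expected main obstacle — is precisely the passage from the abstract normal closure $N$ to its closure $\overline{N}$ inside $\hat{F}_{(p)}$: one must verify that no information is lost, i.e. that a continuous homomorphism to a pro-$p$ group killing all of $R$ already kills $\overline{N}$, which is exactly the closed-kernel observation used above. Everything else is formal manipulation of adjunctions. (Alternatively one can bypass Yoneda and argue directly at the level of finite quotients: the finite $p$-group quotients of $\Gamma$ are precisely the finite $p$-group quotients of $F$ in which every element of $R$ vanishes, and these are precisely the finite quotients of $G$; taking the inverse limit over these mutually cofinal systems identifies $\hat{\Gamma}_{(p)}$ with $G$.)
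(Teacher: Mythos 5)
Your argument is correct. Note that the paper itself offers no proof of this lemma---it is stated as a basic fact---so there is no authorial argument to diverge from; your verification via the two universal properties (pro-$p$ completion and free pro-$p$ group on the finite set $S$), together with the observation that a continuous homomorphism to a pro-$p$ group killing $R$ automatically kills the \emph{closed} normal closure of $R$ because its kernel is closed, is exactly the standard justification, and your alternative remark identifying the cofinal systems of finite $p$-quotients gives the same conclusion more concretely. The only implicit ingredient worth making explicit is why every abstract homomorphism $\Gamma\rightarrow P$ to a pro-$p$ group extends continuously to $\hat{\Gamma}_{(p)}$: the preimage of an open normal subgroup of $P$ has $p$-power index in $\Gamma$, so the homomorphism is continuous for the pro-$p$ topology; with that line added the proof is complete.
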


Finally, we need a guarantee that the groups we construct are indeed lawless 
(see \cite{Zel} p.224). 

\begin{thm} \label{Zelmanovthm}
Let $G$ be a Golod-Shafarevich pro-$p$ group. 
Then $G$ has a non-abelian free subgroup. 
\end{thm}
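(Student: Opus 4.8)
The plan is to invoke Zelmanov's theorem (\cite{Zel}; see also the treatment in \cite{Ersh}) essentially as a black box, since its proof is long and delicate; but let me outline the architecture. To a pro-$p$ group $G$, equipped with its Zassenhaus (lower $p$-central) filtration $G = D_1 \supseteq D_2 \supseteq \cdots$, one attaches the graded restricted Lie algebra $L = L(G) = \bigoplus_{n \geq 1} D_n / D_{n+1}$ over $\mathbb{F}_p$, whose restricted universal enveloping algebra is canonically the graded algebra of the completed group algebra, $u(L) \cong \mathrm{gr}\,\mathbb{F}_p[[G]]$ (Jennings--Quillen). The first step is to check that the Golod--Shafarevich inequality (\ref{GSineq}) for the presentation $G = \langle X_k \mid R \rangle$ passes to the algebra side: the Hilbert series of $u(L)$ dominates, coefficient by coefficient, the power series $\big(1 - kt + \sum_{r \in R} t^{D(r)}\big)^{-1}$, and (\ref{GSineq}) forces the denominator to have a zero in $(0,1)$, so this series --- hence $u(L)$, hence $L$ --- grows exponentially and in particular $L$ is infinite-dimensional.

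The core input is Zelmanov's Lie-theoretic theorem: a graded restricted Lie algebra $L$ whose restricted enveloping algebra satisfies a Golod--Shafarevich inequality contains a free nonabelian Lie subalgebra. I would not reprove this; its proof rests on Gr\"obner--Shirshov (composition--diamond) combinatorics for $u(L)$, which encode growth via the set of associative words avoiding a prescribed family of obstructions, together with structural facts about the growth of associative algebras (notably the absence of Gelfand--Kirillov dimension strictly between $1$ and $2$), culminating in a counting argument that, from the forced exponential growth, extracts two homogeneous elements $\xi_1, \xi_2 \in L$ of some degrees $d_1, d_2$ on which no Lie relation holds.

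It remains to lift freeness from $L$ back to $G$. Pick $g_i \in D_{d_i} \setminus D_{d_i+1}$ lifting $\xi_i$ and let $H = \overline{\langle g_1, g_2 \rangle} \leq G$. Comparing the filtration on $H$ induced from $G$ with the intrinsic Zassenhaus filtration of $H$, the associated graded $L(H)$ surjects onto the free Lie subalgebra $\langle \xi_1, \xi_2 \rangle \leq L$; feeding this into a Hilbert-series / minimal-presentation bookkeeping shows $H$ is a free pro-$p$ group of rank $2$ (some care is needed here because $\xi_1, \xi_2$ may lie deep in the filtration, so this is not a one-line Hilbert-series comparison --- this is handled in \cite{Zel}). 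Finally, the abstract subgroup $\langle g_1, g_2 \rangle \leq H$ is itself abstractly free of rank $2$, since any abstract relation $w(g_1, g_2) = 1$ is in particular an identity in the free pro-$p$ group $H$ and hence forces $w = 1$; so $G$ contains a nonabelian free subgroup.

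The genuine obstacle is the middle paragraph: the implication ``Golod--Shafarevich $\Rightarrow$ free Lie subalgebra'' is the combinatorial heart of Zelmanov's work, and it is precisely this step that I would cite rather than attempt to reconstruct, the surrounding translations to and from group theory being comparatively routine.
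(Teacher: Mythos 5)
The paper offers no proof of this statement at all: it is imported directly from Zelmanov \cite{Zel} (cited just before the theorem), which is exactly what you do by treating the Lie-theoretic core as a black box. Your surrounding outline of the architecture, including the final passage from a free pro-$p$ subgroup to an abstract nonabelian free subgroup (via residual $p$-finiteness of free groups), is consistent with that source, so your approach is essentially the same as the paper's.
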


\begin{coroll} \label{GSlawlesscoroll}
Suppose $\Gamma$ is an (abstract) Golod-Shafarevich group. 
Then $\Gamma$ is lawless. 
\end{coroll}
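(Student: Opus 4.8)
The plan is to push any law for $\Gamma$ forward to its pro-$p$ completion and then contradict Theorem \ref{Zelmanovthm}, using the fact that a group containing a nonabelian free (pro-$p$) subgroup cannot satisfy a law. So suppose, for a contradiction, that $w \in F_k \setminus \lbrace 1 \rbrace$ is a law for $\Gamma$, and set $G = \hat{\Gamma}_{(p)}$.

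First I would show $w$ is a law for $G$. The canonical map $\iota : \Gamma \rightarrow G$ has dense image, hence so does $\iota^k : \Gamma^k \rightarrow G^k$. The word-map $\mathbf{g} \mapsto w(\mathbf{g})$ is continuous on $G^k$, being built from the (continuous) group operations of the topological group $G$; since $G$ is Hausdorff, $\lbrace 1 \rbrace$ is closed, so $Z(w,G)$ is closed in $G^k$. As $w$ is a law for $\Gamma$, we have $\iota^k(\Gamma^k) \subseteq Z(w,G)$, and a closed set containing a dense set is everything: $Z(w,G) = G^k$. Thus $w$ is a law for $G$. (Note this does not require $\iota$ to be injective, so I need not worry about whether $\Gamma$ is residually-$p$.)

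Since $\Gamma$ is Golod-Shafarevich, $G$ is a Golod-Shafarevich pro-$p$ group, so by Theorem \ref{Zelmanovthm} it contains a nonabelian free subgroup $H$, and $w$ restricts to a law for $H$. It remains to see that $H$ has no law. Whether $H$ is a free abstract group or a free pro-$p$ group, it contains an abstract free subgroup of rank $k$: in the pro-$p$ case, choose free topological generators $y_1, \dots, y_k$ of a rank-$k$ free pro-$p$ subgroup of $H$ (nonabelian free pro-$p$ groups contain free pro-$p$ subgroups of every finite rank), and recall that the abstract subgroup $\langle y_1, \dots, y_k \rangle$ is free of rank $k$, since $F_k$ is residually-$p$ and hence embeds in $\hat{F}_k$. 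Then $w(y_1, \dots, y_k) \neq 1$, contradicting that $w$ is a law for $H$. Hence $\Gamma$ is lawless.

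The only delicate point is the descent of a law from $\Gamma$ to $\hat{\Gamma}_{(p)}$; the rest is an assembly of standard facts (continuity of word-maps on topological groups, density of $\Gamma$ in its completion, residual-$p$-ness of free groups, and Zelmanov's theorem). If one prefers to avoid even that, one could instead note directly that every finite $p$-quotient of $\Gamma$ of which $w$ is a law factors through a relator set still satisfying the Golod-Shafarevich inequality, but the completion argument above is cleaner.
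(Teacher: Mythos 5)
Your proof is correct, and it transfers information between $\Gamma$ and its pro-$p$ completion in the opposite direction from the paper. You push a hypothetical law \emph{up}: density of $\iota(\Gamma)$ in $G=\hat{\Gamma}_{(p)}$ plus continuity of the word map and closedness of $\lbrace 1 \rbrace$ force $Z(w,G)=G^k$, and you then contradict Theorem \ref{Zelmanovthm} after extracting an abstract free subgroup of rank $k$ from the free (possibly pro-$p$) subgroup $H$. The paper instead pulls the non-vanishing witness \emph{down}: Zelmanov gives $g,h\in G$ with $w(g,h)\neq 1$, and since $G=\varprojlim_{N}\Gamma/N$ over normal subgroups $N$ of $p$-power index, the image of $w(g,h)$ is already nontrivial in some finite quotient $\Gamma/N$; as $\Gamma/N$ is a quotient of $\Gamma$ and laws pass to quotients, $w$ is not a law for $\Gamma$. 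The paper's route needs no topology beyond the definition of the inverse limit, while yours needs the (standard) density-and-continuity argument but avoids invoking the finite quotients at all; both hinge on the same key input, Zelmanov's theorem. A genuine merit of your write-up is that you address explicitly the point the paper elides: Zelmanov's theorem in its natural pro-$p$ formulation produces a free \emph{pro-$p$} subgroup, and one must know (via residual $p$-finiteness of $F_k$, as you say) that its topological generators generate an abstract free group before evaluating $w$ at them --- a fact the paper's proof also tacitly uses when it asserts $w(g,h)\neq 1$. (Minor simplification available to you: a rank-$2$ abstract free subgroup suffices, since $F_2$ contains $F_k$, so you do not need free pro-$p$ subgroups of every finite rank.)
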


\begin{proof}
Let $1 \neq w \in F_2$. 
Let $\mathcal{N}$ be the inverse system 
of normal subgroups of $p$-power index in $\Gamma$, 
and let $G = \varprojlim_{N \in \mathcal{N}} \Gamma/N$ 
be the pro-$p$ completion of $\Gamma$. 
By Theorem \ref{Zelmanovthm} there exist $g,h \in G$ 
freely generating a rank-$2$ free subgroup of $G$. 
Then there exists $N \in \mathcal{N}$ 
such that the image of $w(g,h)$ is non-trivial in $\Gamma/N$. 
In particular, $w$ is not a law for $\Gamma/N$. 
But $\Gamma/N$ is also a quotient of $\Gamma$, 
so $w$ is not a law for $\Gamma$ either. 
\end{proof}

Theorem \ref{GSmainthm} is immediate from 
Corollary \ref{GSlawlesscoroll} and the next result. 
Recall that $\pi_{\Gamma} ^S$ is the \emph{torsion growth function} of $\Gamma$ 
(see Example \ref{TorsionEx} above). 

\begin{propn} \label{GSTachPropn}
For all $k \geq 2$ there is a torsion Golod-Shafarevich $p$-group 
$\Gamma = \langle X_k \mid R' \rangle$, 
and a constant $C > 0$ such that for all $n \in \mathbb{N}$, 
\begin{equation}
\pi_{\Gamma} ^{\pi(X_k)} (n) \leq C n \text{.}
\end{equation}
\end{propn}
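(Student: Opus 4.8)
The plan is to build $\Gamma$ as a quotient of the free group $F_k$ by imposing just enough $p$-power relations to kill every element, while keeping the Golod-Shafarevich inequality (\ref{GSineq}) satisfied. Concretely, I would enumerate the elements of $F_k$ as $w_1, w_2, \ldots$ (say, in order of nondecreasing word-length, breaking ties arbitrarily), and then recursively choose exponents $e_1 \leq e_2 \leq \cdots$ and add the relation $w_i^{p^{e_i}}$ to $R'$. The resulting group $\Gamma = \langle X_k \mid w_i^{p^{e_i}} : i \in \mathbb{N} \rangle$ is torsion by construction, since every element of $F_k$ maps to an element of order dividing some $p^{e_i}$. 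The two things to verify are: first, that the exponents $e_i$ can be chosen large enough that the relation degrees $D(w_i^{p^{e_i}}) = p^{e_i} D(w_i)$ (by Lemma \ref{degpowerlem}) make the sum $\sum_i \tau^{D(w_i^{p^{e_i}})}$ converge to something small enough that (\ref{GSineq}) holds for a suitable $\tau$; and second, that $e_i$ can \emph{simultaneously} be kept small enough — as a function of $i$, hence of $|w_i|$ — that the torsion growth is linear.

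For the Golod-Shafarevich side: fix $\tau \in (0,1)$ with, say, $1 - k\tau < -1$ (possible since $k \geq 2$: take $\tau$ slightly less than $2/k$ won't quite do for $k=2$, so rather take $\tau$ close to $1$; then $1 - k\tau$ can be made any negative number of absolute value up to $k-1$). We then need $\sum_i \tau^{D(w_i^{p^{e_i}})} < k\tau - 1$. Since $D(w_i) \geq 1$ always, it suffices to make $\sum_i \tau^{p^{e_i}}$ small, which we can do by requiring $\tau^{p^{e_i}} \leq 2^{-i}(k\tau - 1)$, i.e. $p^{e_i} \geq \log_{\tau}\!\big(2^{-i}(k\tau-1)\big) = C_1 i + C_2$ for constants $C_1, C_2$ depending on $\tau$ and $k$. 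Thus it suffices to take $e_i = \lceil \log_p(C_1 i + C_2) \rceil$, so $e_i \ll \log i$.

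For the torsion growth: an element $g \in \Gamma$ with $|g|_{\pi(X_k)} \leq n$ is the image of some $w \in F_k$ with $|w| \leq n$; among the enumeration, $w = w_i$ for some $i$ with $|w_i| \leq n$, hence $i \leq |B(n)| \leq (2k)^{n+1}$ (the size of the ball of radius $n$ in $F_k$). Its order divides $p^{e_i}$, and $p^{e_i} = p^{\lceil \log_p(C_1 i + C_2)\rceil} \leq p(C_1 i + C_2) \leq p(C_1 (2k)^{n+1} + C_2)$. That is only exponential in $n$, not linear — so a naive enumeration is too wasteful. The fix is to enumerate more cleverly: rather than listing \emph{all} words, it is enough to kill one representative of each conjugacy class, or better, to exploit that in a group where $w^{p^e} = 1$ we automatically get $(uwu^{-1})^{p^e} = 1$ and also control over powers; but the cleanest route is to observe that it suffices to impose relations $w^{p^{e}}$ only for $w$ ranging over a set that grows polynomially. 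In fact the standard Golod-Shafarevich torsion construction only needs, for each $n$, to kill the $p^{\text{th}}$ powers (or suitable powers) of a \emph{bounded} number of new words per length, arranged so that every element of length $\leq n$ already has small order; the book-keeping of Chapter 3 of \cite{Ersh} does exactly this, and the point is to run it keeping $e_i$ as a slowly-growing function.

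\textbf{The main obstacle.} The hard part will be reconciling the two competing demands in a single relation set: Golod-Shafarevich needs the relation degrees $D(w_i^{p^{e_i}}) = p^{e_i}D(w_i)$ to be \emph{large} (so the tail $\sum \tau^{D(\cdot)}$ is summable and small), while linear torsion growth needs $p^{e_i}$ to be \emph{small} as a function of $|w_i|$. These can only be reconciled because $D(w_i)$ itself can be taken large — for instance by first replacing each target element by a high power or a long conjugate before imposing the $p^{e}$-relation, or by choosing the enumeration so that the $i$-th relation involves a word of degree $\gg \log i$. Getting the quantitative trade-off exactly right — so that $e_i$ stays $O(\log |w_i|)$, forcing $\pi_\Gamma^{\pi(X_k)}(n) \ll n$, while (\ref{GSineq}) still holds — is the crux, and I expect it to require the careful inductive degree-bookkeeping of \cite{Ersh}'s construction rather than the crude enumeration sketched above.
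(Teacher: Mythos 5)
There is a genuine gap, and it sits exactly where you placed your ``main obstacle.'' Your setup (enumerate $F_k$ by length, impose $w_i^{p^{e_i}}$, use Lemma \ref{degpowerlem}) is the paper's setup, but you then bound the Golod--Shafarevich sum term-by-term by a geometric series in the \emph{index} $i$, demanding $\tau^{p^{e_i}} \leq 2^{-i}(k\tau-1)$ and hence $p^{e_i} \gtrsim i$. Since $i$ is exponential in $\lvert w_i \rvert$, this indeed wrecks linear torsion growth, as you note. The missing observation is that this term-by-term requirement is far stronger than necessary: if you take $p^{e_i}$ merely \emph{proportional to the length} $\lvert w_i\rvert$, say $p^{e_i} \in [\lambda \lvert w_i \rvert, p\lambda\lvert w_i\rvert)$ for a constant $\lambda$, then grouping the sum by word length gives
\begin{equation*}
\sum_i \tau^{D(w_i^{p^{e_i}})} \leq \sum_i \tau^{p^{e_i}} \leq \sum_{\ell \geq 1} \big\lvert B_{X_k}(\ell)\big\rvert\, \tau^{\lambda \ell} \ll \sum_{\ell \geq 1} \big( (2k-1)\tau^{\lambda}\big)^{\ell},
\end{equation*}
which converges and can be made smaller than $k\tau-1$ once the constant $\lambda$ exceeds $\log(2k-1)/\log(1/\tau)$ (plus a little more). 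The exponential decay of $\tau^{\lambda\ell}$ beats the exponential count of words of length $\ell$; no per-index geometric decay, and no control of $D(w_i)$ beyond $D(w_i)\geq 1$, is needed. Every $g\in\Gamma$ of length $\leq n$ is the image of some $w_i$ with $\lvert w_i\rvert \leq n$, so its order divides $p^{e_i} \leq p\lambda n$: linear torsion growth. This is exactly what the paper does, in the parametrization where all words of length in $(cp^{m-1}, cp^m]$ receive the exponent $p^{m+m_0}$, there are at most $Cq^{p^m}$ of them, and the sum $\sum_m Cq^{p^m}\tau^{p^{m+m_0}} = C\sum_m (q\tau^{p^{m_0}})^{p^m}$ is made small by taking the constant $m_0$ large. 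No inductive bookkeeping from \cite{Ersh} is required.

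The repairs you sketch in place of this do not work. One cannot get torsion with linear torsion growth from a polynomially-growing set of relators: even passing to conjugacy-class representatives leaves exponentially many words of length $\leq n$ to kill, and ``a bounded number of new words per length'' is simply not enough to make every element of length $\leq n$ have order $O(n)$. Likewise $D(w_i)$ cannot ``be taken large'': it is determined by $w_i$ and equals $1$ for any word nontrivial in the abelianization mod $p$, which is most of them; and replacing $w_i$ by a high power before imposing the $p$-power relation raises the order of (the image of) $w_i$ itself, destroying the linear bound you are trying to prove. So as written the proposal does not establish the proposition; the elementary length-grouping estimate above is the step that closes it.
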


\begin{proof}[Proof of Theorem \ref{GSmainthm}]
Let $v$ be as in Proposition \ref{GSTachPropn}. 
Let $W = \lbrace x^{p^k} : k\in\mathbb{N} \rbrace$. 
Then for all $n \in \mathbb{N}$, 
\begin{equation*}
\mathcal{A}_{\Gamma} ^{X_k} (Cn) 
\geq \mathcal{A}_{\Gamma,W} ^{X_k} (Cn) \geq n+1
\end{equation*}
(by Lemma \ref{supersetlawsineq} and Example \ref{TorsionEx}). 
\end{proof}

\begin{proof}[Proof of Proposition \ref{GSTachPropn}]
Let $w_1 , w_2 , w_3 \ldots$ be an enumeration of the non-trivial 
elements of $F(X_k)$, ordered such that $\lvert w_n \rvert$ 
is non-decreasing. 
Recall that there exists $C > 0$ such that 
$\lvert B_{X_k} (l) \rvert \leq C (2k - 1)^l$. 
Choose $q > 1$ and let $0 < c < \log(q)/\log (2k-1)$, 
so that: 
\begin{equation}
a_m := C q^{p^m} \geq \big\lvert B_{X_k} (c p^m) \big\rvert\text{.}
\end{equation} 
for $m \geq 1$ (with $a_0 := 0$). 
Choose $m_0 \in \mathbb{N}$ and set $r_n = w_n ^{p^{m+m_0}}$ 
for $a_{m-1} + 1 \leq n \leq a_m$ 
and $R^{\prime} = \lbrace r_n : n \in \mathbb{N} \rbrace$. 
Let $\Gamma = \langle X_k | R^{\prime} \rangle$ 
and let $g \in \Gamma$. 
If $m \in \mathbb{N}$ is such that 
$c p^{m-1} \leq \lvert g \rvert_{X_k} \leq c p^m$, 
then there exists $1 \leq n \leq a_m$ 
such that $g = w_n$ in $\Gamma$, 
so that the order of $g$ in $\Gamma$ divides 
$p^{m+m_0} \leq p^{m_0 +1} \lvert g \rvert_{X_k} / c$, 
so $\tau_{\Gamma} ^{X_k}$ grows at most linearly. 

It therefore suffices to check that 
we can choose $q$ and $m_0$ such that $\Gamma$ is Golod-Shafarevich. 
By Lemma \ref{degpowerlem}, 
$D(r_n) \geq p^{m+1}$ for $n > a_m$, 
hence for $\tau \in (0,1)$, 
\begin{equation} 
\label{GSreldefineq}
\sum_{r \in R^{\prime}}\tau^{D(r)}\leq\sum_{m=1} ^{\infty} a_m \tau ^{p^{m+m_0}} 
= C \sum_{m=1} ^{\infty} \Big( q\tau^{p^{m_0}} \Big)^{p^m}  
= C \sum_{m=1} ^{\infty} h^{p^m}  
\end{equation}
where $h = q\tau^{p^{m_0}}$. 
It is clear that the right-hand side of (\ref{GSreldefineq}) 
can be made arbitrarily small by making $h$ arbitrarily small. 
If we take $q=2$ and $\tau = 3/4$, 
then we have (\ref{GSineq}) provided $h$ is sufficiently small that 
the right-hand side of (\ref{GSreldefineq}) is $< (3k-4)/4$. 
This can be achieved for $m_0$ larger than an absolute constant. 
\end{proof}

\begin{rmrk}
\normalfont
It is not difficult to strengthen the proof of Proposition \ref{GSTachPropn} 
to show that if $\tilde{\Gamma}$ is Golod-Shafarevich, 
then there is a torsion Golod-Shafarevich $p$-group $\Gamma$, 
a surjective homomorphism 
$\pi : \tilde{\Gamma} \rightarrow \Gamma$ 
and a constant $C > 0$
such that for all $n \in \mathbb{N}$, 
\begin{equation}
\pi_{\Gamma} ^{\pi(S)} (n) \leq C n \text{.}
\end{equation}
For, the pro-$p$ completion of $\tilde{\Gamma}$ already has a presentation 
satisfying (\ref{GSineq}). 
Adding in the relations $R'$ from the proof of Proposition \ref{GSTachPropn} 
yields a presentation for the pro-$p$ completion of $\Gamma$, 
which has at most linear torsion growth. 
But we can make the contribution of $R'$ to the left-hand side of (\ref{GSineq}) 
arbitrarily small, so that the inequality (\ref{GSineq}) still holds, 
and $\Gamma$ is still Golod-Shafarevich. 
\end{rmrk}

\section{Grigorchuk's group} \label{BranchSect}

For the duration of this Section, 
$\Gamma$ will denote the first Grigorchuk group. 
For background on this group and automorphisms of rooted trees more generally, 
we refer to \cite{GrigChap}. 
The group $\Gamma$ is defined as a group of automorphisms of the binary 
rooted tree $\mathcal{T}$: 
$V(\mathcal{T}) = \lbrace 0,1 \rbrace ^{\ast}$ is 
the set of finite formal words in the alphabet $\lbrace 0,1 \rbrace$, 
and for each $v \in \lbrace 0,1 \rbrace ^{\ast}$, $\epsilon \in \lbrace 0,1\rbrace$, 
there is an edge joining $v$ to $v\epsilon$. 
For each $v \in V(\mathcal{T})$, 
let $\mathcal{T}_v$ be the subtree rooted at $v$, that is 
the induced subgraph on $\lbrace vw : w \in V(\mathcal{T}) \rbrace$. 
Note that $\mathcal{T}_v \cong \mathcal{T}$ via $vw \mapsto w$. 
Let: 
\begin{center}
$\rst (v) = \lbrace g \in \Aut (\mathcal{T}) 
: w^g=w \text{ for all }w \in V(\mathcal{T}) \setminus V(\mathcal{T}_v)\rbrace$ 
\end{center}
be the \emph{restricted stabilizer} at $v$. 
Then for every $v$, 
the above isomorphism $\mathcal{T}_v \cong \mathcal{T}$ 
induces an isomorphism $\rst (v) \rightarrow \Aut (\mathcal{T})$; 
we write $g\mid_{\mathcal{T}_v} \in \Aut (\mathcal{T})$ 
for the image under this isomorphism of $g \in \rst(v)$. 

Let $V_n =  \lbrace 0,1 \rbrace^n$ be the set of words of length $n$ 
(geometrically, the set of vertices at distance $n$ from the root of the tree). 
Let $\Stab (n) \leq \Aut(\mathcal{T})$ be the pointwise stabilizer of $V_n$. 
Then for $g \in \Stab (n)$, there exist unique 
$g_v \in \rst (v)$, for $v \in V_n$, such that: 
\begin{equation} \label{rstdecompeqn}
g = \prod_{v \in V_n} g_v
\end{equation}
(note that the $g_v$, being disjointly supported, commute). 
Extending our notation above, we write $g\mid_{\mathcal{T}_v}=g_v \mid_{\mathcal{T}_v}$ 
for $g \in \Stab(n)$, $v \in V_n$ and $g_v \in \rst (v)$ as in (\ref{rstdecompeqn}). 
The decomposition (\ref{rstdecompeqn}) yields an isomorphism 
$\Aut(\mathcal{T})^{V_n} \cong \Stab(n)$, 
and for $K \leq \Aut(\mathcal{T})$ we shall identify $K^{V_n}$ 
with its image in $\Aut(\mathcal{T})$ under this isomorphism. 
Likewise, when $g$ and $g_v$ are as in (\ref{rstdecompeqn}) 
we shall write $g = (g_v)_{v \in V_n}$. 

The automorphisms $a$, $b$, $c$ and $d$ of $\mathcal{T}$ 
are defined as follows: 
\begin{center}
$\begin{array}{ll}
(0 w)^a = 1 w; & (1 w)^a = 0 w; \\
(0 w)^b = 0 (w^a); & (1 w)^b = 1 (w^c); \\
(0 w)^c = 0 (w^a); & (1 w)^c = 1 (w^d); \\
(0 w)^d = 0 w; & (1 w)^d = 1 (w^b)
\end{array}$
\end{center}
for any $w \in \lbrace 0,1 \rbrace^{\ast}$. 
In other words, $a$ swaps the two subtrees $\mathcal{T}_0$ and $\mathcal{T}_1$, 
and $b,c,d \in \Stab (1)$ are given by: 
\begin{center}
$b=(a,c)$, $c=(a,d)$, $d=(1,b)$. 
\end{center}
Grigorchuk's group $\Gamma$ is defined to be 
the subgroup of $\Aut(\mathcal{T})$ generated by $S = \lbrace a,b,c,d \rbrace$. 
Let $x = abab \in \Gamma$ and let $K = \langle x \rangle ^{\Gamma} 
\vartriangleleft \Gamma$ be the normal closure of $x$ in $\Gamma$. 

\begin{propn}[\cite{GrigChap} Proposition 8]
We have $K^{V_1} \leq K$ (and hence $K^{V_n} \leq K$ for all $n$). 
\end{propn}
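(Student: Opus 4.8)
The plan is to work via the embedding $\psi\colon\Stab(1)\hookrightarrow\Aut(\mathcal{T})\times\Aut(\mathcal{T})$, $g\mapsto(g\mid_{\mathcal{T}_0},g\mid_{\mathcal{T}_1})$, coming from the decomposition (\ref{rstdecompeqn}) with $n=1$; recall $\psi(b)=(a,c)$, $\psi(c)=(a,d)$, $\psi(d)=(1,b)$, and that conjugation by $a$ interchanges the two coordinates, i.e.\ $\psi(g^{a})=(g\mid_{\mathcal{T}_1},g\mid_{\mathcal{T}_0})$ for $g\in\Stab(1)$. Since $x=abab\in\Stab(1)$ and $\Stab(1)\trianglelefteq\Gamma$, the normal closure $K=\langle x\rangle^{\Gamma}$ lies in $\Stab(1)$, so $\psi$ is defined on all of $K$, and under $\psi$ the subgroup $K^{V_1}$ corresponds to $K\times K\leq\Gamma\times\Gamma$. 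Thus the statement to prove is $K\times K\subseteq\psi(K)$.

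The crux will be a single computation. Directly from the action formulae for $a$ and $b$ one finds $\psi(x)=\psi\big((ab)^{2}\big)=(ca,ac)$ (so in particular $x^{a}=x^{-1}$), and then
\[
\psi(x^{c})=\psi(c)^{-1}\psi(x)\psi(c)=(a,d)(ca,ac)(a,d)=(ac,\,dacd),
\]
so that $\psi(x\cdot x^{c})=(ca\cdot ac,\;ac\cdot dacd)=(1,\,acdacd)=(1,\,(acd)^{2})$. Now comes the point: in $\Gamma$ we have the relation $cd=b$ (the Klein four-group structure of $\{1,b,c,d\}$, an easy induction on the levels of $\mathcal{T}$), whence $acd=ab$ and therefore $(acd)^{2}=(ab)^{2}=x$. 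Hence $\psi(x\cdot x^{c})=(1,x)$, and since $x\cdot x^{c}\in K$ this shows $(1,x)\in\psi(K)$; conjugating by $a$ and using $K\trianglelefteq\Gamma$ gives $(x,1)\in\psi(K)$ too.

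To conclude, I would set $L=\{k_{0}\in\Gamma:(k_{0},1)\in\psi(K)\}$. This is a subgroup (the preimage of $\psi(K)$ under the injective homomorphism $\Gamma\to\Gamma\times\Gamma$, $k_{0}\mapsto(k_{0},1)$), and it is normal in $\Gamma$: given $k_{0}\in L$ and $g\in\Gamma$, pick $h\in\Stab(1)\cap\Gamma$ with $h\mid_{\mathcal{T}_0}=g$ --- possible because $h\mapsto h\mid_{\mathcal{T}_0}$ maps $\Stab(1)\cap\Gamma$ onto $\Gamma$ (its Schreier generators $b,b^{a},c^{a},d^{a}$ restrict on $\mathcal{T}_0$ to $a,c,d,b$) --- and then conjugating $\psi^{-1}(k_{0},1)\in K$ by $h$ stays in $K$ and has $\psi$-image $(g^{-1}k_{0}g,1)$. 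By the previous paragraph $x\in L$, so $K=\langle x\rangle^{\Gamma}\subseteq L$, i.e.\ $K\times 1\subseteq\psi(K)$; the symmetric argument (or conjugation by $a$) gives $1\times K\subseteq\psi(K)$, and multiplying, $K\times K\subseteq\psi(K)$, which is $K^{V_1}\leq K$. Finally $K^{V_n}\leq K$ for all $n$ follows by induction on $n$: an element of $K^{V_{n+1}}$ restricts on each $\mathcal{T}_u$ ($u\in V_n$) to an element of $K^{V_1}\leq K$ (applying the case $n=1$ inside $\mathcal{T}_u\cong\mathcal{T}$), hence lies in $K^{V_n}$, which is contained in $K$ by the inductive hypothesis.

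The only real difficulty is bookkeeping in the base step: fixing the conjugation conventions, verifying the recursion $\psi(x)=(ca,ac)$, and noticing the collapse $acd=ab$ that makes the second coordinate of $\psi(x\cdot x^{c})$ equal to $x$ itself. Once that identity is in hand everything else is formal; in particular nothing beyond the self-similar structure of $\Gamma$ and the normality of $K$ is used.
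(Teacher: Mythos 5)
Your proof is correct, and since the paper gives no argument of its own (it simply cites \cite{GrigChap}, Proposition 8), the right comparison is with that standard proof — which yours essentially reproduces: your witness $x\,x^{c}$ is exactly the classical element $(abad)^{2}$ whose first-level decomposition is $(1,x)$. From there, as in the cited source, normality of $K$ together with surjectivity of the section map $\Stab(1)\cap\Gamma\to\Gamma$ (and conjugation by $a$) yields $K\times K\leq K$, and the inductive step for $K^{V_n}$ is routine.
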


\begin{propn} \label{RstLengthProp}
Let $y_n \in K^{V_n}$ be given by: 
\begin{equation*}
y_n \mid_{\mathcal{T}_v} = \Big\{ \begin{array}{cc} x & v=0^n \\
e & \text{otherwise}
\end{array}
\end{equation*}
for $v \in V_n$. 
Then there exists $C>0$ such that for all $n$, 
$\lvert y_n \rvert_S \leq C (1+\sqrt{3})^n$. 
\end{propn}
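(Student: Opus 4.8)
The plan is to bound $\ell_n := \lvert y_n \rvert_S$ by a recursion in $n$. First note that $y_n$ is genuinely an element of $\Gamma$: iterating the cited Proposition~8 ($K^{V_1}\le K$) gives $K^{V_n}\le K\le\Gamma$, and by construction $y_n\in K^{V_n}$. Write $\psi$ for the level-$1$ decomposition map $g\mapsto (g\mid_{\mathcal{T}_0},g\mid_{\mathcal{T}_1})$, which embeds $\Stab(1)$ (inside $\Gamma$) into $\Gamma\times\Gamma$ via the canonical identifications $\mathcal{T}_0\cong\mathcal{T}\cong\mathcal{T}_1$. Since the vertex $0^n$ lies in $\mathcal{T}_0$ and restriction is natural, one has $\psi(y_n)=(y_{n-1},e)$ for all $n\ge 1$ (with the convention $y_0=x$). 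So everything reduces to a linear bound on the cost of the \emph{one-sided embedding}: given $g\in K$ presented by a word of length $\ell$ over $S$, produce a word of length at most $\lambda\ell+O(1)$ over $S$ for $\psi^{-1}(g,e)$, with $\lambda\le 1+\sqrt{3}$. Granting this, $\ell_n\le\lambda\ell_{n-1}+O(1)$, and since the base case $\ell_1=\lvert y_1\rvert_S$ is a fixed constant (a concrete short word for $\psi^{-1}(x,e)$), the Proposition follows.

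For the one-sided embedding I would argue as follows. First, $\psi^{-1}(g,e)$ is well defined for $g\in K$ because $(g,e)\in K\times K\subseteq\psi(K)$, which is exactly the content of the cited Proposition~8. A crude approach is to fix a finite generating set $T$ of the finite-index subgroup $K$, built from short conjugates of $x$, and a fixed word $w_t$ over $S$ representing $\psi^{-1}(t,e)$ for each $t\in T$ (these exist since each $t\in K$); a Reidemeister--Schreier rewriting expresses $g\in K$ as a product of $O(\lvert g\rvert_S)$ elements of $T$, and concatenating the corresponding $w_t$ gives $\lvert\psi^{-1}(g,e)\rvert_S\ll\lvert g\rvert_S$, hence \emph{some} exponential bound $\ell_n\ll c^n$ — which already suffices for the weaker conclusion $\mathcal{A}_{\Gamma}^S(n)\ll\exp(Cn)$ needed in Theorem~\ref{Grigmainthm}. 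To get the sharper base $1+\sqrt{3}$ one instead works directly with $a,b,c,d$, exploiting the period-$3$ self-similarity $b=(a,c)$, $c=(a,d)$, $d=(1,b)$ together with the identities $ada=(b,1)$, $aca=(d,a)$ and their conjugates: realizing $\psi^{-1}(g,e)$ in a way that descends one more level into the tree forces one to produce, alongside a bounded number of copies of the level-$(n-1)$ element, also a bounded number of copies of the level-$(n-2)$ element. Tracking word-lengths through $\psi$, and using that a reduced word in $\Gamma$ alternates $a$ with letters of $\{b,c,d\}$, this yields a two-term recursion whose characteristic polynomial has largest root $1+\sqrt{3}$ (of the shape $\ell_n\le 2\ell_{n-1}+2\ell_{n-2}+O(1)$, with characteristic polynomial $t^2-2t-2$); solving it gives $\ell_n\le C(1+\sqrt{3})^n$.

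The main obstacle is precisely this last step: controlling the multiplicative cost of $\psi^{-1}(\,\cdot\,,e)$ with a constant good enough to produce the base $1+\sqrt{3}$. Passing through an arbitrary generating set of $K$ loses too much, so one must carry out the rewriting explicitly in the generators $a,b,c,d$ and keep careful account of how the alternating normal form interacts with $\psi$ and with the relations $ada=(b,1)$ etc.; the difficulty here is bookkeeping rather than any conceptual point. One should also verify at the outset that $\psi^{-1}(x,e)=y_1$ admits a concrete short word over $S$, which pins down the base of the recursion, and check that the naturality identity $\psi(y_n)=(y_{n-1},e)$ is compatible with the chosen identification $\mathcal{T}_0\cong\mathcal{T}$ so that the recursion genuinely closes.
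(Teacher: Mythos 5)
Your setup is sound: $y_n$ lies in $\Gamma$ because $K^{V_n}\le K$, the recursion $y_n=(y_{n-1},1)$ is the right structure, and you correctly observe that a crude Reidemeister--Schreier/finite-index argument gives $\lvert y_n\rvert_S\ll c^n$ for \emph{some} $c$, which would already suffice for the exponential upper bound in Theorem \ref{Grigmainthm}. But for the statement actually being proved -- the specific base $1+\sqrt{3}$ -- there is a genuine gap: you assert, without any mechanism, that rewriting $\psi^{-1}(g,e)$ in the generators $a,b,c,d$ ``yields a two-term recursion of the shape $\ell_n\le 2\ell_{n-1}+2\ell_{n-2}+O(1)$,'' and you dismiss this as bookkeeping. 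Nothing in your argument produces copies of the level-$(n-2)$ element, and the one-sided embedding $g\mapsto(g,1)$ carries no a priori per-level multiplicative cost anywhere near $1+\sqrt{3}$; the relations $ada=(b,1)$, $aca=(d,a)$ by themselves only give an uncontrolled exponential base. You have reverse-engineered the characteristic polynomial $t^2-2t-2$ from the desired answer rather than derived it.

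The missing conceptual input is a specific identity in Grigorchuk's group, which is exactly how the paper proceeds: from the computation $[x,(x,1)]=(x^{-1},1,1,1)$ (so $y_2=[y_1,y_0]$) and the self-similarity $y_n=(y_{n-1},1)$ one gets, for $n\ge 3$,
\begin{equation*}
y_n=(y_{n-1},1)=\big([y_{n-2},y_{n-3}],1\big)=\big[(y_{n-2},1),(y_{n-3},1)\big]=[y_{n-1},y_{n-2}],
\end{equation*}
and it is this commutator identity that gives $\lvert y_n\rvert_S\le 2\big(\lvert y_{n-1}\rvert_S+\lvert y_{n-2}\rvert_S\big)$ and hence the bound $C(1+\sqrt{3})^n$. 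Without this identity (or some concrete substitute), your claimed two-term recursion is unsupported, so the proof as proposed does not establish the Proposition; it only re-proves the weaker fact that $\lvert y_n\rvert_S$ grows at most exponentially.
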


\begin{proof}
We have $y_0 = x$ and $y_1 = y = (x,1)$. 
As noted in the proof of \cite[Proposition 9]{GrigChap}, 
$[x,y] = (x^{-1},1,1,1) = y_2 ^{-1}$, so $y_2 = [y_1,y_0]$. 
Since for $n \geq 1$ we have $y_n = (y_{n-1},1)$, 
so by induction, for $n \geq 3$ we have: 
\begin{equation} \label{GrigCommEqn}
y_n =(y_{n-1},1)=\big([y_{n-2},y_{n-3}],1\big)
=\big[(y_{n-2},1),(y_{n-3},1) \big] =[y_{n-1},y_{n-2}]\text{.}
\end{equation} 
Now there exists $C_0 > 0$ such that 
$\lvert y_0 \rvert_S , \lvert y_1 \rvert_S \leq C_0$ 
(the latter since $y_1 \in K^{V_1} \leq K \leq \Gamma$), 
and by (\ref{GrigCommEqn}) we have 
$\lvert y_n \rvert_S \leq 2 \lvert y_{n-1} \rvert_S + \lvert y_{n-2} \rvert_S$ 
for $n \geq 2$. 
Solving the corresponding recurrence yields the required bound. 
\end{proof}


Let $W_n$ be the $(n+1)$-fold iterated regular wreath product of $C_2$; 
that is $W_0 = C_2$ and $W_{n+1} = W_n \wr C_2$. 
Alternatively we may view $W_n$ as a subgroup of $\Aut (\mathcal{T})$, 
as follows: let $a \in \Aut (\mathcal{T})$ be as above, 
and identify $W_0 = C_2$ with $\langle a \rangle \leq \Aut (\mathcal{T})$. 
Having defined $W_n \leq \Aut (\mathcal{T})$, 
a general element of 
$W_{n+1}$ is $(g_0,g_1) a^{\epsilon} \in W_n \wr C_2$ 
(with $g_0 , g_1 \in W_n$ and $\epsilon \in \lbrace 0,1 \rbrace$). 
We identify this with the unique $g \in \Aut (\mathcal{T})$ 
satisfying: 
\begin{center}
$g a^{\epsilon} \in \Stab(1)$ and $(g a^{\epsilon})\mid_{\mathcal{T}_v} = g_v$ for $v\in V_1$. 
\end{center}
It is easily seen that this identification yields an embedding 
of $W_{n+1}$ as a subgroup of $\Aut (\mathcal{T})$. 
Moreover, the action of $W_n \leq \Aut (\mathcal{T})$ on $V_{n+1}$ 
is faithful and yields an isomorphism of permutation groups 
from $W_{n+1}$ to the imprimitive permutational wreath product 
 $\langle a \rangle \wr_{V_{n+1}} W_n$ 
(since $V_{n+2} = V_{n+1} \times V_1$ via the identification of $v\epsilon$ 
with $(v,\epsilon)$). 

Define, for $n \in \mathbb{N}$, $a_n \in \Aut (\mathcal{T})$ by: 
$a_0 = a$ and $a_{n+1} \in \rst \big( 0^{n+1} \big)$ by: 
\begin{equation*}
a_{n+1} |_{T_{0^{n+1}}} = a
\end{equation*} 
so that $S_n = \lbrace a_0 , a_1 , \ldots , a_n \rbrace$ generates $W_n$. 

\begin{lem} \label{WPSchreierLem}
For any $v \in V_{n+1}$, there exists $h \in W_n$ 
with $\lvert h \rvert_{S_n} \leq n+1$ 
and $ (0^{n+1})^{h} = v$. 
\end{lem}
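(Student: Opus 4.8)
The plan is to prove Lemma \ref{WPSchreierLem} by induction on $n$, exploiting the recursive self-similar structure of $W_n$ inside $\Aut(\mathcal{T})$. The key observation is that $W_n$ acts on $V_{n+1}$ as the iterated imprimitive permutational wreath product $\langle a \rangle \wr_{V_{n+1}} W_{n-1}$, and the generator $a_n$ is precisely the automorphism supported on the subtree $\mathcal{T}_{0^n}$ acting as $a$ there, i.e.\ swapping the two children of $0^n$. So $a_n$ toggles the last coordinate of any vertex of the form $0^n\epsilon$ (and fixes everything else).

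\medskip

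For the base case $n=0$: $V_1 = \{0,1\}$, and $a_0 = a$ swaps $0$ and $1$, so either $h = e$ (if $v = 0$) or $h = a_0$ (if $v = 1$) works, with $\lvert h \rvert_{S_0} \leq 1 = n+1$. For the inductive step, suppose the statement holds for $n-1$. Write $v \in V_{n+1} = V_n \times V_1$ as $v = w\epsilon$ with $w \in V_n$ and $\epsilon \in \{0,1\}$. First I would bring the first $n$ coordinates into position: by the induction hypothesis applied to $W_{n-1} = \langle S_{n-1}\rangle$ acting on $V_n$, there exists $h' \in W_{n-1}$ with $\lvert h' \rvert_{S_{n-1}} \leq n$ and $(0^n)^{h'} = w$. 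Viewing $W_{n-1} \leq W_n$ via the embedding in $\Aut(\mathcal{T})$, the element $h'$ permutes the length-$(n+1)$ vertices by acting on the first $n$ coordinates only (it lies in $\Stab(1)$-type behaviour with trivial action below the relevant level), so $h'$ sends $0^{n+1} = 0^n0$ to $w0$. Then, depending on $\epsilon$, I conjugate or compose with $a_n$ to fix the last coordinate: if $\epsilon = 0$, take $h = h'$; if $\epsilon = 1$, take $h = a_n h'$ (since $a_n$ swaps the two children of $0^n$, sending $0^{n+1} = 0^n0$ to $0^n1$, and then $h'$ carries $0^n1$ to $w1 = v$ because $h'$ only touches the first $n$ coordinates). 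Either way $\lvert h \rvert_{S_n} \leq \lvert h' \rvert_{S_{n-1}} + 1 \leq n+1$.

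\medskip

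\textbf{The main obstacle} I anticipate is bookkeeping the precise order of composition and the action on coordinates: one must be careful that $h'$, regarded as an element of $W_n$ via the fixed embedding into $\Aut(\mathcal{T})$, really does act on $V_{n+1}$ by permuting the leading $n$ symbols of each vertex word while fixing the final symbol, and that $a_n$ acts by toggling the final symbol of $0^n\epsilon$ and fixing all vertices not under $\mathcal{T}_{0^n}$. This is exactly the content of the identification $W_{n+1} \cong \langle a \rangle \wr_{V_{n+1}} W_n$ and the definition of the $a_i$ made just before the lemma, so once that is invoked the verification is routine. One should also double-check the edge convention on word-length — the generating set $S_n$ is symmetric (each $a_i$ is an involution), so lengths behave additively under composition and the bound $n+1$ is clean.
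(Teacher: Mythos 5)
Your proof is correct and is essentially the paper's argument: the paper simply writes down the explicit word $h = a_n^{\epsilon_n} a_{n-1}^{\epsilon_{n-1}} \cdots a_0^{\epsilon_0}$ for $v = \epsilon_0\cdots\epsilon_n$, which is exactly what your induction produces when unwound (your step appends $a_n^{\epsilon}$ to the inductively obtained $h'$). The only difference is presentational: you verify by induction what the paper asserts directly, with the same length bound $n+1$.
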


\begin{proof}
If $v = \epsilon_0 \epsilon_1 \cdots \epsilon_n$, 
then $h = a_n ^{\epsilon_n} a_{n-1} ^{\epsilon_{n-1}} 
\cdots a_1 ^{\epsilon_1} a_0 ^{\epsilon_0}$ works. 
\end{proof}

\begin{propn} \label{WPlawlengthprop}
$W_n$ has no law in $F_k$ of length at most $n+1$. 
In other words, 
$B(n+1) \setminus \lbrace 1 \rbrace \subseteq N_k (W_n)$. 
More precisely, for any $1 \neq w \in F_k$ with $\lvert w \rvert \leq n+1$, 
there exist $g_1 , \ldots , g_k \in W_n$ with: 
\begin{equation} \label{WPlawlengtheqn}
w(g_1 , \ldots , g_k) \neq e
\text{ and } \sum_{i=1} ^k \lvert g_i \rvert_{S_n} \leq (n+1)^2. 
\end{equation}
\end{propn}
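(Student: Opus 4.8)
The plan is to prove, by induction on $n$, the following unwound form of the statement: for every $1 \neq w \in F_k$ with $\lvert w \rvert \leq n+1$ there is a tuple $\mathbf{g} \in W_n ^k$ with $w(\mathbf{g}) \neq e$, with $\sum_{i=1}^k \lvert g_i \rvert_{S_n} \leq (n+1)^2$, and with $g_i = e$ whenever $x_i$ does not occur in $w$. This is exactly (\ref{WPlawlengtheqn}), and it gives $B(n+1) \setminus \lbrace 1 \rbrace \subseteq N_k(W_n)$. The base case $n = 0$ is immediate: $W_0 = \langle a_0 \rangle$, and if $\lvert w \rvert \leq 1$ then $w = x_i ^{\pm 1}$, so $g_i = a_0$ (and $g_j = e$ otherwise) gives $w(\mathbf{g}) = a_0 \neq e$ with $\sum_j \lvert g_j \rvert_{S_0} = 1$.

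For the inductive step write $W_{n+1} = W_n \Wr C_2$, with base group $B = W_n \times W_n$ (an element of $B$ being a pair of automorphisms of $\mathcal{T}_0$ and $\mathcal{T}_1$), with $\langle a_0 \rangle$ acting by swapping the two factors, and with $\langle a_1 , \ldots , a_{n+1} \rangle$ the copy of $W_n$ supported on $\mathcal{T}_0$, on which $\lbrace a_1 , \ldots , a_{n+1} \rbrace \subseteq S_{n+1}$ plays the role of $S_n$. Fix $1 \neq w \in F_k$ with $\ell := \lvert w \rvert \leq n+2$. If $\ell \leq n+1$, the inductive hypothesis gives $\mathbf{h} \in W_n^k$ with $w(\mathbf{h}) \neq e$ and $\sum_i \lvert h_i \rvert_{S_n} \leq (n+1)^2$; pushing $\mathbf{h}$ into $\langle a_1 , \ldots , a_{n+1} \rangle$ yields the desired $\mathbf{g}$, with $\sum_i \lvert g_i \rvert_{S_{n+1}} \leq (n+1)^2 \leq (n+2)^2$. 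If $\ell = n+2$ and some variable $x_{i^{\ast}}$ has odd exponent-sum in $w$, take $g_{i^{\ast}} = a_0$ and $g_j = e$ otherwise: since $a_0$ is an involution, $w(\mathbf{g}) = a_0 ^{(\text{exp-sum of } x_{i^{\ast}})} = a_0 \neq e$, and $\sum_j \lvert g_j \rvert_{S_{n+1}} = 1$.

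The remaining (and only substantial) case is $\ell = n+2$ with every variable of even exponent-sum. Then for any $g_i \in W_{n+1}$ the image of $w(\mathbf{g})$ in $C_2 = W_{n+1}/B$ is trivial, so $w(\mathbf{g}) \in B$. Substitute $g_i = (p_i, e) a_0 ^{\epsilon_i}$ with $p_i \in W_n$ and $\epsilon_i \in \lbrace 0, 1 \rbrace$, taking $\epsilon_i = 0$, $p_i = e$ for $i$ not occurring in $w$; collecting the $a_0$'s to the right, the running parity of the $\epsilon$-values induces a red/blue two-colouring of the $\ell$ letters of $w$ for which $w(\mathbf{g}) = (u(\mathbf{p}), v(\mathbf{p})) \in B$, where $u$ (resp. $v$) is the subword of $w$ formed by the red (resp. blue) letters in order. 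The key combinatorial point is that $(\epsilon_i)$ can be chosen so that both colours appear and $u$ (or, symmetrically, $v$) is nontrivial in $F_k$. Granting this, $\lvert u \rvert \leq \ell - 1 = n+1$, so the inductive hypothesis gives $\mathbf{p} \in W_n^k$ with $u(\mathbf{p}) \neq e$ and $\sum_i \lvert p_i \rvert_{S_n} \leq (n+1)^2$, whence $g_i = (p_i, e) a_0 ^{\epsilon_i}$ satisfies $w(\mathbf{g}) = (u(\mathbf{p}), v(\mathbf{p})) \neq e$ and $\sum_i \lvert g_i \rvert_{S_{n+1}} \leq (n+1)^2 + \lvert \lbrace i : \epsilon_i = 1 \rbrace \rvert \leq (n+1)^2 + (n+2) \leq (n+2)^2$; this closes the induction.

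It remains to establish the combinatorial point, and I expect this to be the main obstacle; reducedness of $w$ enters here. If some variable $x_{i^{\ast}}$ occurs exactly twice in $w$, one checks, running through the four sign-patterns of those occurrences and using that $w$ is reduced, that taking $(\epsilon_i)$ to be the indicator of $i^{\ast}$ makes one of $u, v$ a nonempty reduced word. If $w \notin [F_k, F_k]$, any single-variable choice works, since some variable has nonzero exponent-sum, split between $u$ and $v$. The remaining case — $w \in [F_k, F_k]$ with every variable of multiplicity at least $4$ — requires a more careful choice of the support of $(\epsilon_i)$, still affordable by the bound above; this is the genuinely delicate point. As an alternative to the whole argument, one could try to realise $w$ directly as a nontrivial permutation of the leaf set $V_{n+1}$ (which has $\geq \ell + 1$ elements), using Lemma \ref{WPSchreierLem} to bound the lengths of the automorphisms realising the required partial bijections; the obstruction there is that automorphisms of $\mathcal{T}$ preserve the level of the meet of two leaves, so the orbit witnessing $w$ must be chosen carefully.
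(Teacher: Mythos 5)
Your overall induction scheme (pass from $W_n$ to $W_{n+1}=W_n\Wr C_2$, kill the odd-exponent-sum case with $a_0$, and in the even case substitute $g_i=(p_i,e)a_0^{\epsilon_i}$ so that $w(\mathbf{g})=(u(\mathbf{p}),v(\mathbf{p}))$ with $u,v$ the two colour-class subwords) is coherent, and the length bookkeeping $(n+1)^2+(n+2)\leq(n+2)^2$ is fine. But the proof has a genuine gap exactly at what you call the ``key combinatorial point'': you must show that for every reduced $w$ with all exponent sums even there is a choice of $(\epsilon_i)$ making both colour classes nonempty and one of $u,v$ nontrivial in $F_k$, and you only verify this in the easy cases. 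The case you leave open ($w\in[F_k,F_k]$ with every variable occurring at least four times) is not a routine remainder -- it is the heart of the matter, and ``requires a more careful choice of the support of $(\epsilon_i)$'' is not an argument. Even the cases you do treat are more delicate than you indicate: for $w=x_t^{-1}Bx_t$ with $x_t$ occurring exactly twice (or more generally for words that are not cyclically reduced), taking $\epsilon$ to be the indicator of $x_t$ puts \emph{every} letter in one colour class, so the surviving subword is $w$ itself and there is no length drop; you need to first replace $w$ by a cyclically reduced conjugate (harmless, since conjugate words have the same witnesses, but it must be said), and similar care is needed in the non-commutator case to guarantee both colours actually occur. As it stands the induction does not close.

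For comparison, the paper avoids this combinatorial problem entirely by proving a stronger inductive statement of the kind you gesture at in your final sentence but do not develop: for each $w$ with $\lvert w\rvert\leq n+1$ one finds a tuple in $W_n$ such that the successive images $0^{n+1},\,(0^{n+1})^{w^{(1)}(\mathbf{g})},\ldots,(0^{n+1})^{w(\mathbf{g})}$ of the vertex $0^{n+1}$ under the prefixes of $w$ are pairwise distinct. The induction removes the last letter of $w$, applies the hypothesis one level down, and then corrects the offending generator by multiplying by a conjugate $h^{-1}a_nh$ with $\lvert h\rvert_{S_{n-1}}\leq n$ supplied by Lemma \ref{WPSchreierLem}, which moves the endpoint to $0^n1$, off the previous trajectory; the level-preservation obstruction you raise does not arise because the whole orbit lives in the single level $V_{n+1}$, and the cost $2n+1$ per step reproduces the $(n+1)^2$ bound. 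Either carry out an argument of that type, or supply a genuine proof of your colouring claim in the commutator-subgroup case; without one of these the proposal is incomplete.
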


\begin{proof}
We proceed by induction on $n$. 
In fact, we make a stronger claim. 
For $w \in F_k$ and $0 \leq m \leq \lvert w \rvert$ 
define $w^{(m)}$ to be the \emph{$m$-prefix} of $w$, 
that is, $w^{(m)} \in F_k$ is the unique element satisfying 
(i) $\lvert w^{(m)} \rvert = m$ and 
(ii) there exists $u^{(m)} \in F_k$ 
such that $\lvert u^{(m)} \rvert =\lvert w \rvert - m$ 
and $w = w^{(m)} u^{(m)}$ (so that $w^{(0)}$ is the empty word 
and $w^{(\lvert w \rvert)} = w$). 
Our claim is that for all $w \in F_k$ 
with $\lvert w \rvert \leq n+1$, 
there exists $\mathbf{g} = (g_1 , \ldots , g_k) \in W_n ^k$ with: 
\begin{equation*}
\sum_{i=1} ^k \lvert g_i \rvert_{S_n} \leq (n+1)^2
\end{equation*}
such that the points: 
\begin{center}
$v_0 = 0^{n+1}, 
v_1 = v_0 ^{w^{(1)}(\mathbf{g})},
\ldots, v_0 ^{w^{(\lvert w \rvert)}(\mathbf{g})} 
=v_0 ^{w(\mathbf{g})}$ 
\end{center}
are all distinct. 
In particular, $v_0 \neq v_0 ^{w(\mathbf{g})}$, 
so $w(\mathbf{g}) \neq e$, and (\ref{WPlawlengtheqn}) follows. 
The claim clearly holds for $n=0$. 

Let $n \geq 1$, let $1 \neq w \in F_k$ with $2 \leq \lvert w \rvert \leq n+1$, 
(if $\lvert w \rvert = 1$ the claim is trivial) 
and suppose the claim fails for $w$ and $W_{n+1}$. 
Let $u = w^{(\lvert w \rvert-1)}$; WLOG 
(by permuting and inverting the variables $x_i$) 
we may assume that $w=ux_1$. 
Since $1 \neq u$, $\lvert u \rvert \leq n$, 
we may assume by induction that there exists 
$\mathbf{g}' = (g_1' , \ldots , g_k') \in W_{n-1} ^k$ 
witnessing the truth of the claim for $u$ and $W_{n-1}$. 
Let $v_i ' = (0^n)^{u^{(i)}(\mathbf{g}')} \in V_n$ for $0 \leq i \leq \lvert u \rvert$ 
(a sequence of $\lvert u \rvert + 1$ distinct points). 
Now consider the $g_j '$ as elements of $W_n = \langle a \rangle \wr_{V_n} W_{n-1}$, 
acting naturally on $V_{n+1}$, and consider the points 
$\tilde{v}_i = (0^{n+1})^{w^{(i)}(\mathbf{g}')}$ 
for $0 \leq i \leq \lvert w \rvert = \lvert u \rvert + 1$. 
For $i \leq \lvert u \rvert$ we have $\tilde{v}_i = v_i ' 0$; 
these points are distinct. 
On the other hand, by assumption the $\tilde{v}_i$ are not all distinct, 
so we must have $\tilde{v}_{\lvert w \rvert} = \tilde{v}_0 = 0^{n+1}$. 

Let $h \in W_{n-1}$ with $\lvert h \rvert_{S_{n-1}} \leq n$ 
and $ (0^n)^{h} = v_{\lvert u \rvert} '$; 
such exists by Lemma \ref{WPSchreierLem}. 
Then: 
\begin{center}
$h^{-1} a_n h \in \rst (v_{\lvert u \rvert} ')$, 
with $(h^{-1} a_n h)\mid_{T_{v_{\lvert u \rvert} '}} = a$, 
and $\lvert h^{-1} a_n h \rvert_{S_n} \leq 2n+1$. 
\end{center}
Set $g_1 = h^{-1} a_n h g_1 '$ and $g_j = g_j '$ for $2 \leq j \leq k$. 
Consider $v_i = (0^{n+1})^{w^{(i)}(\mathbf{g})}$. 
We have $v_i = \tilde{v_i}$ for $0 \leq i \leq \lvert u \rvert$ 
since $v_i ' \neq v_{\lvert u \rvert} '$ for $i \leq \lvert u \rvert - 1$ 
(note that $w$ is a reduced word so the final letter of $u$ is not $x_1 ^{-1}$); 
these points are distinct. 
But then: 
\begin{center}
$v_{\lvert w \rvert} = v_{\lvert u \rvert} ^{g_1} 
 = ((v_{\lvert u \rvert} ' 0)^{h^{-1} a_n h})^{g_1 '} 
 = (v_{\lvert u \rvert} ' 1)^{g_1 '} 
 = 0^n 1$
\end{center}
differs in its final letter from all $v_i = v_i ' 0$ for $i \leq \lvert u \rvert$. 
Moreover, 
\begin{equation*}
\sum_{i=1} ^k \lvert g_i \rvert_{S_n} 
\leq (2n+1)+\sum_{i=1} ^k \lvert g_i ' \rvert_{S_n}.
\end{equation*}
The claim follows. 
\end{proof}

The next Proposition is essentially proved in the course of 
Proposition 10 of \cite{GrigChap}. 
We include a proof for the reader's convenience. 

\begin{propn} \label{WPEmbGrigProp}
For each $n \in \mathbb{N}$ there is an injective homomorphism 
$\Phi_n:W_n\rightarrow \Gamma$ sending $a_i$ 
to $k_{i+1} =y_{5i} ^4$ for $0\leq i\leq n$. 
\end{propn}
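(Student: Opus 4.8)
I would argue by induction on $n$, constructing $\Phi_n$ as an isomorphism onto $P_n := \langle k_1 , \ldots , k_{n+1} \rangle \leq \Gamma$. Everything rests on two facts about the generator $k_1 = x^4 = (abab)^4$: that it has order exactly $2$, and a precise description of its action on $\mathcal{T}$. I would obtain both by running the standard ``portrait'' computations in $\Gamma$, using $b=(a,c)$, $c=(a,d)$, $d=(1,b)$: one finds $x=(ca,ac)$, then $(ac)^2=(da,ad)$, then $(ad)^2=(b,b)$, and unwinding these gives $x^4 \in \Stab(3)$ with $x^4\mid_{\mathcal{T}_v} = b$ for every $v \in V_3$ (a ``diagonal $b$ at level $3$''). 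Since $b \neq e = b^2$ this yields $x^4 \neq e = x^8$, so $k_1$ has order $2$, and it pins down exactly how $k_1$ permutes the subtrees below level $3$. The base case $n=0$ is then immediate: $\Phi_0 : W_0 = \langle a_0 \rangle \to \Gamma$, $a_0 \mapsto k_1$, is injective because $k_1 \neq e = k_1^2$.

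For the inductive step, assume $\Phi_n : W_n \to P_n$ is an isomorphism with $\Phi_n(a_i) = k_{i+1}$. Restriction to $\mathcal{T}_{0^5}$ is an injective homomorphism on $\rst(0^5) \cong \Aut(\mathcal{T})$, and since $k_{i+1} = y_{5i}^4 \in \rst(0^5)$ with $k_{i+1}\mid_{\mathcal{T}_{0^5}} = y_{5(i-1)}^4 = k_i$, it identifies $Q := \langle k_2 , \ldots , k_{n+2} \rangle$ with $P_n$; thus $Q \cong W_n$, and $Q$ is supported inside $\mathcal{T}_{0^5}$. The key geometric observation is that $k_1 = x^4 \in \Stab(3)$ moves the vertex $0^5 = (0^3)(00)$ to $(0^3)(01)$ (because $(00)^b = 01$), so conjugation by $k_1$ carries $Q$ onto a subgroup $Q' := k_1 Q k_1$ supported inside the \emph{disjoint} subtree $\mathcal{T}_{(0^3)(01)}$. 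Hence $Q$ and $Q'$ commute, meet trivially, and are interchanged by $k_1$; moreover $k_1 \notin \langle Q , Q' \rangle$ because $k_1$ acts nontrivially on $\mathcal{T}_{1^3}$, which $Q \times Q'$ fixes pointwise. Therefore $P_{n+1} = \langle k_1 , Q \rangle = (Q \times Q') \rtimes \langle k_1 \rangle$, a copy of $W_n \wr C_2 = W_{n+1}$. Matching this with the parallel decomposition $W_{n+1} = \big( \langle a_1 , \ldots , a_{n+1} \rangle \times a_0 \langle a_1 , \ldots , a_{n+1} \rangle a_0 \big) \rtimes \langle a_0 \rangle$ — and noting that restriction to $\mathcal{T}_0$ identifies $\langle a_1 , \ldots , a_{n+1} \rangle$ with $W_n$ via $a_i \mapsto a_{i-1}$ — the composite of this identification with $\Phi_n$ and with restriction-to-$\mathcal{T}_{0^5}$ inverted, together with $a_0 \mapsto k_1$, assembles into an isomorphism $\Phi_{n+1} : W_{n+1} \to P_{n+1}$ with $\Phi_{n+1}(a_i) = k_{i+1}$ for all $0 \leq i \leq n+1$. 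This last assembly is the routine fact that an isomorphism of base groups intertwining the two $C_2$-swap actions extends to an isomorphism of wreath products.

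\textbf{Main obstacle.} The genuinely load-bearing step is the identity $(abab)^4 = \prod_{v \in V_3} b_v$ in $\Gamma$: it is a finite but delicate computation with the branching recursions for $a,b,c,d$, and it is exactly what supplies the ``room'' that the construction needs, namely that $x^4$ lies in $\Stab(3)$ and displaces the subtree on which $k_2 = y_5^4$ is supported (the gap of $5$ levels in $k_{i+1} = y_{5i}^4$ more than suffices). A secondary point needing a little care is the verification that $k_1 \notin \langle Q , Q' \rangle$, for which the cleanest route is to produce an explicit vertex of $\mathcal{T}_{1^3}$ moved by $b$, hence by $k_1$, but fixed by all of $Q \times Q'$.
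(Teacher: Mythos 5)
Your proposal is correct and follows essentially the same route as the paper's proof: induction on $n$, the portrait computation showing $x^4$ has section $b$ at every level-$3$ vertex (equivalently, sections $a$ and $c$ at level $4$, as the paper states), the shift isomorphism $\Aut(\mathcal{T})\cong\rst(0^5)$ identifying $\langle k_2,\ldots,k_{n+2}\rangle$ with $\langle k_1,\ldots,k_{n+1}\rangle$, and the observation that $k_1$ moves $0^5$ to $0^41$, so that $\langle k_1,Q\rangle\cong Q\wr C_2\cong W_{n+1}$. The only differences are cosmetic (a shifted induction index, and your explicit check that $k_1\notin Q\times Q'$, which the paper leaves implicit).
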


\begin{proof}
We follow the proof of Proposition 4 of \cite{GrigChap}, 
specialized to our setting. 
We have $k_{i+1} \in \rst (0^{5i})$, with $k_{i+1} \mid_{\mathcal{T}_{0^{5i}}} = x^4$, 
and $x^4 \in \Stab(4)$ is given, for $v\in V_3$, by: 
\begin{equation*}
x^4 \mid_{\mathcal{T}_{v0}} = a \text{ and } x^4 \mid_{\mathcal{T}_{v1}} = c; 
\end{equation*} 
in particular, every $k_{i+1}$ has order $2$. 
We prove the conclusion by induction on $n$. 
Certainly $W_0 = C_2 \cong \langle k_1 \rangle \leq \Gamma$, 
so the conclusion holds for $n=0$. 
Let $Q_n = \langle k_1 , \ldots , k_n \rangle \leq \Gamma$; 
we suppose by induction that there is an isomorphism $\Phi_n : W_{n-1}\rightarrow Q_n$ 
sending $a_i$ to $k_{i+1}$ for $0 \leq i \leq n-1$. 
Note that the natural isomorphism $\Aut(\mathcal{T}) \cong \rst (0^5)$ 
(induced by the isomorphism of trees $\mathcal{T} \cong \mathcal{T}_{0^5}$) 
sends $k_i$ to $k_{i+1}$; the restriction of this map to $Q_n$ yields 
an isomorphism from $Q_n$ to $P_n = \langle k_2 , \ldots , k_{n+1} \rangle$. 
Similarly the natural isomorphism $\Aut(\mathcal{T}) \cong \rst (0)$ 
sends $a_i$ to $a_{i+1}$, 
so restricts to an isomorphism from $W_{n-1}$ to $\langle a_1 , \ldots , a_n \rangle$. 
Composing, we have a monomorphism $\Psi_n : P_n \rightarrow W_n$ 
sending $k_{i+1}$ to $a_i$ for $1 \leq i \leq n$. 
We shall extend $\Psi_n$ from $P_n$ to $Q_{n+1}$. 

Since $P_n \leq \rst (0^5)$ and $k_1 \mid_{T_{0^4}} = a$, 
we have $P_n ^{k_1} \leq \rst (0^4 1)$, 
so $P_n , P_n ^{k_1} \leq \Gamma$ generate their direct product, 
and $k_1$ (being of order $2$) acts by conjugation by swapping the two factors. 
Thus $Q_{n+1} = \langle k_1 , P_n \rangle \cong P_n \wr C_2$, 
with the $C_2$-factor generated by $k_1$. 
To finish, we compose with the isomorphism 
$P_n \wr C_2 \cong W_{n-1} \wr C_2 = W_{n+1}$ induced by $P_n \cong W_{n-1}$. 
\end{proof}

One of the original motivations for introducing $\Gamma$ was the following, 
now-famous result. 

\begin{thm} \label{Grig2grpthm}
For all $g \in \Gamma$, 
there exists $k \in \mathbb{N}$ such that $g^{2^k} = 1$. 
\end{thm}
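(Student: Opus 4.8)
The plan is to prove that every element of $\Gamma$ has $2$-power order by the standard length-reduction argument of Grigorchuk, using the self-similar structure of $\Gamma$ and the relation $a^2 = b^2 = c^2 = d^2 = e$. First I would record the basic facts: $a$ has order $2$, and $b, c, d \in \Stab(1)$ are each of order $2$ with $b = (a,c)$, $c = (a,d)$, $d = (1,b)$; moreover $bc = d$, $bd = c$, $cd = b$, so $\{e,b,c,d\}$ is a Klein four-group. Any $g \in \Gamma$ can be written as a word in $a,b,c,d$; by inserting/deleting the trivial relations among $b,c,d$ and cancelling $a^2$, we may put $g$ in the reduced form $\ast a \ast a \cdots$ alternating between $a$ and a letter from $\{b,c,d\}$. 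Passing to $g^2$ if necessary, we may assume $g \in \Stab(1)$, so $g = (g_0, g_1)$ with $g_0, g_1 \in \Gamma$ (using that $\Gamma$ is self-similar: the sections of $a,b,c,d$ at vertices of level $1$ again lie in $\{e,a,b,c,d\}$).

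The heart of the argument is the length estimate. For $g \in \Stab(1)$ written as a reduced word of length $\ell$ in $S$, one shows $\lvert g_0 \rvert_S + \lvert g_1 \rvert_S \leq \ell/2 + (\text{small correction})$ — more precisely, each occurrence of $a$ in the word for $g$ is "consumed" in forming the sections, while each occurrence of $b$, $c$, or $d$ contributes a letter to exactly one of $g_0, g_1$ (since $b = (a,c)$ etc. have one nontrivial section) — except that a $d$ contributes nothing to the $g_0$-coordinate. The careful bookkeeping gives $\lvert g_0 \rvert_S + \lvert g_1 \rvert_S \leq \tfrac{1}{2}(\lvert g \rvert_S + 1)$ or a similar inequality with a strict decrease once $\lvert g \rvert_S$ is large enough. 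I would then argue by strong induction on $\lvert g \rvert_S$: elements of bounded length (a finite list) are checked to have $2$-power order directly (or handled as the base case), and for longer $g$, after replacing $g$ by $g^2 \in \Stab(1)$ we get $g^2 = (g_0,g_1)$ with $\lvert g_0 \rvert_S, \lvert g_1 \rvert_S < \lvert g \rvert_S$; by induction $g_0^{2^{k_0}} = g_1^{2^{k_1}} = e$, whence $(g^2)^{2^{\max(k_0,k_1)}} = e$ and so $g^{2^{\max(k_0,k_1)+1}} = e$.

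The one subtlety requiring care — and the step I expect to be the main obstacle — is the boundary case of the length estimate: the naive bound $\lvert g_0\rvert_S + \lvert g_1\rvert_S \le \lvert g\rvert_S / 2 + 1$ does not by itself give a strict decrease when $\lvert g\rvert_S$ is small, and one must check that the finitely many short elements (those of length $\le 3$ or so, say those lying in $\langle a,b,c,d\rangle$ with word length below the threshold where the estimate bites) genuinely are $2$-elements. This is done by hand: the relevant finite set of group elements is small, and each is either trivial, conjugate to a generator, or of the form $ab$, $ac$, $ad$, $bab$, etc., all of which are readily seen to have order a power of $2$ by a direct computation of their action on the first few levels of $\mathcal{T}$. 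Since this is the now-classical theorem of Grigorchuk, I would cite \cite{GrigChap} for the detailed verification of the base cases and present the induction and length estimate in the streamlined form above. Note that this result is \emph{not} needed for the lawlessness growth bounds of Theorem \ref{Grigmainthm} in the form of an independent input — rather, Theorem \ref{Grig2grpthm} is recalled here because the torsion growth function $\pi_\Gamma^S$ only makes sense once we know $\Gamma$ is torsion, and the lower bound $n^{2/3} \ll \mathcal{A}_\Gamma^S(n)$ will be extracted (via Example \ref{TorsionEx}) from a quantitative refinement of exactly this argument due to Bartholdi and \u{S}uni\'k.
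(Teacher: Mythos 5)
The paper itself does not prove this statement: Theorem \ref{Grig2grpthm} is recalled as Grigorchuk's classical torsion theorem (the reader is implicitly referred to \cite{GrigArt}, \cite{GrigChap}), and only its combination with Theorem \ref{Grigsmallorderthm} is used later. So simply citing the literature, as you partly do, would have matched the paper exactly. Your final remark about the role of the theorem (it is needed so that $\pi_\Gamma^S$ makes sense and so that the power words $x^{2^m}$ vanish on all short elements) is accurate.

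As a self-contained argument, however, your sketch has a genuine gap, and it is not the one you flag. The bookkeeping claim is already off: $b=(a,c)$ and $c=(a,d)$ have \emph{two} nontrivial sections, so a non-$a$ letter contributes (at most) one letter to \emph{each} section; the correct contraction is the per-section bound $\lvert g_0\rvert_S,\lvert g_1\rvert_S\le(\lvert g\rvert_S+1)/2$ for $g\in\Stab(1)$, not a bound on the sum. More seriously, the inductive step ``replace $g$ by $g^2\in\Stab(1)$ and get sections of length $<\lvert g\rvert_S$'' fails: for $g$ of odd type $\lvert g^2\rvert_S$ is about $2\lvert g\rvert_S$, so the contraction only yields $\lvert g_i\rvert_S\lesssim\lvert g\rvert_S$, and equality genuinely occurs at all scales, not just for short words. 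Concretely, $(ab)^2=(ca,ac)$ and $(ca)^2=(ad,da)$: the sections have the \emph{same} length as the element, so strong induction on $\lvert g\rvert_S$ alone does not terminate. The missing idea is the secondary descent that drives Grigorchuk's proof: after conjugating to a cyclically reduced word $au_1au_2\cdots au_k$ of odd type, one argues that if some $u_i=d$ then (since $d=(1,b)$) both sections of $g^2$ are strictly shorter; if some $u_i=c$ but no $d$ occurs, the sections acquire letters $d$ and one descends one more level; and if all $u_i=b$ then $g$ is a power of $ab$, which has order $8$. Equivalently, the strict length drop is only guaranteed after passing through the cycle $b\to c\to d\to 1$ of sections, within three levels. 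Without this case analysis (or some equivalent device, e.g.\ the finer length inequalities in \cite{GrigChap}), the induction as you set it up does not close, and checking elements of length at most $3$ by hand does not repair it.
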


As we have seen (Example \ref{TorsionEx} above) 
slow torsion growth yields fast lawlessness growth for $p$-groups. 
The following is a consequence of Theorem 7.7 of \cite{BartSuni} 
(see also the bullet-points at the end of Section 1 of that paper). 

\begin{thm} \label{Grigsmallorderthm}
There exists $C>0$ such that for all $g \in \Gamma$, 
$o(g) \leq C \lvert g \rvert_S ^{3/2}$. 
\end{thm}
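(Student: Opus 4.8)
The statement is quoted from \cite[Theorem 7.7]{BartSuni}; for completeness I sketch the argument one would run to recover it. Since $\Gamma$ is a torsion $2$-group (Theorem \ref{Grig2grpthm}), write $o(g) = 2^{\ell(g)}$, so that it suffices to prove $\ell(g) \leq \tfrac{3}{2}\log_2 \lvert g \rvert_S + O(1)$. The engine is the self-similar structure of $\Gamma$, in two ingredients. First, a \emph{contraction estimate}: there is a length function $\lVert \cdot \rVert$ on $\Gamma$, induced by assigning suitable positive weights to $a,b,c,d$ (so that $\lVert \cdot \rVert \asymp \lvert \cdot \rvert_S$), together with constants $\lambda \in (0,1)$ and $c \geq 0$, such that every $g \in \Stab(1)$, written $g = (g_0,g_1)$ via (\ref{rstdecompeqn}), satisfies $\lVert g_0 \rVert + \lVert g_1 \rVert \leq \lambda \lVert g \rVert + c$. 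This is a case analysis of how a reduced word collapses under the coordinate map $\Stab(1) \rightarrow \Gamma \times \Gamma$, driven by the relations $b=(a,c)$, $c=(a,d)$, $d=(1,b)$ (crucially, $d$ contributes nothing to the first coordinate). Second, a \emph{descent for non-stabilizing elements}: if $g \notin \Stab(1)$, set $\gamma = ga \in \Stab(1)$; then $g^2 = \gamma\,(a\gamma a) = (\gamma_0\gamma_1,\gamma_1\gamma_0)$, the two coordinates are conjugate, and hence (for $g \neq 1$) $o(g) = 2\,o(\gamma_0\gamma_1)$ with $\lVert \gamma_0\gamma_1 \rVert \leq \lVert\gamma_0\rVert + \lVert\gamma_1\rVert \leq \lambda\lVert g\rVert + c'$.

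Given these, I would argue by a descent. Starting from $h_0 = g$, at stage $j$ pass from $h_j$ (assumed $\neq 1$) to $h_{j+1}$ defined as the coordinate of larger order among $h_j\mid_{\mathcal{T}_0}, h_j\mid_{\mathcal{T}_1}$ when $h_j \in \Stab(1)$, and as $\gamma_0\gamma_1$ (with $\gamma = h_j a$) when $h_j \notin \Stab(1)$. In both cases $\lVert h_{j+1}\rVert \leq \lambda\lVert h_j\rVert + c'$, and $\ell(h_j) = \varepsilon_j + \ell(h_{j+1})$ with $\varepsilon_j \in \{0,1\}$, equal to $1$ exactly when $h_j \notin \Stab(1)$. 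Iterating the length bound gives $\lVert h_j\rVert \leq \lambda^j\lVert g\rVert + c'/(1-\lambda)$, so after $T = \lceil\log_{1/\lambda}\lVert g\rVert\rceil$ stages we reach $\lVert h_T\rVert = O(1)$ and hence $\ell(h_T) = O(1)$ (there being only finitely many elements of bounded length). Therefore
\[
\ell(g) = \sum_{j < T}\varepsilon_j + \ell(h_T) \leq T + O(1) = \frac{\log_2\lVert g\rVert}{\log_2(1/\lambda)} + O(1),
\]
which, since $\lVert\cdot\rVert \asymp \lvert\cdot\rvert_S$, yields $o(g) \leq C\,\lvert g\rvert_S^{\,1/\log_2(1/\lambda)}$.

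So the content of the theorem is that one can choose the weights so that the level-$1$ contraction constant satisfies $1/\log_2(1/\lambda) \leq 3/2$, equivalently $\lambda \leq 2^{-2/3}$. This is where I expect the real work to lie, and it is exactly the optimization carried out in \cite[\S 7]{BartSuni}: the \emph{unweighted} word metric is not good enough (elements like $(ab)^k$ show that a single coordinate of a length-$n$ element of $\Stab(1)$ can again have length $\asymp n$, so genuine contraction only becomes visible after descending several levels), and one must balance the weight of $a$ against those of $b,c,d$ to force a definite contraction already at level $1$. Granting that optimization, the bookkeeping above completes the proof; the only other point to check is the very first reduction, namely that replacing $\lvert\cdot\rvert_S$ by $\lVert\cdot\rVert$ costs at most a multiplicative constant, which is immediate since $S$ is finite.
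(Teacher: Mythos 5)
The paper itself does not prove this statement: it is imported verbatim as a consequence of \cite[Theorem 7.7]{BartSuni}, so your opening move --- citing that theorem --- coincides with what the paper does, and nothing more is required. The difficulty is that the reconstruction you sketch ``for completeness'' is not the argument of \cite{BartSuni}, and its pivotal step is not merely ``real work deferred to the reference'' but is in fact false, so a reader following your outline could not recover the theorem.

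Concretely, the claim that weights on $a,b,c,d$ can be chosen so that every $g=(g_0,g_1)\in\Stab(1)$ satisfies $\lVert g_0\rVert+\lVert g_1\rVert\le\lambda\lVert g\rVert+c$ with $\lambda\le 2^{-2/3}$ is unavailable: iterating such a one-level inequality in the standard way bounds the growth of $\Gamma$ by $\exp\bigl(n^{\log 2/\log(2/\lambda)}\bigr)$, which for $\lambda=2^{-2/3}$ is $\exp(n^{3/5})$, contradicting the known growth of Grigorchuk's group, $\exp(n^{\alpha_0+o(1)})$ with $\alpha_0=\log 2/\log(2/\eta)\approx 0.767$, where $\eta\approx 0.811$ is the root of $x^3+x^2+x=2$ (Bartholdi's upper bound, shown sharp by Erschler and Zheng); since this counting argument only uses $\lVert\cdot\rVert\asymp\lvert\cdot\rvert_S$, no cleverer choice of weights or length function can rescue the level-one statement. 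The best one-level constant actually available is Bartholdi's $\eta$, and feeding it into your bookkeeping --- which charges a doubling of the order at every descent step --- yields only $o(g)\ll\lvert g\rvert_S^{1/\log_2(1/\eta)}\approx\lvert g\rvert_S^{3.3}$, far from $3/2$. The exponent $3/2$ in \cite{BartSuni} comes instead from an analysis carried out over several levels at once, exploiting the cycle $b\mapsto c\mapsto d\mapsto(\text{trivial in one coordinate})$: over three successive levels the order can multiply by at most $2^3$ while lengths contract by a factor of roughly $4$, so the ratio $2^{-2/3}$ appears only as an average over three levels, coupled to the count of order-doubling steps, and not as a level-one contraction. (A small additional slip: $(ab)^k$ cannot witness non-contraction of the unweighted metric, since $ab$ has order $16$.) So either leave the statement as a citation, exactly as the paper does, or redo your descent three levels at a time following \cite[Section 7]{BartSuni}.
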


\begin{proof}[Proof of Theorem \ref{Grigmainthm}]
For the upper bound, let $n \in \mathbb{N}$ 
and let $1 \neq w \in F_k$ with $\lvert w \rvert \leq n+1$. 
Let $g_1 , \ldots , g_k \in W_n$ be as in Proposition \ref{WPlawlengthprop} 
and let $\Phi_n : W_n \rightarrow \Gamma$ be as in Proposition \ref{WPEmbGrigProp}. 
We have: 
\begin{equation*}
e \neq \Phi_n \big( w (g_1 , \ldots , g_k) \big) 
= w \big( \Phi_n (g_1) , \ldots , \Phi_n (g_k)  \big)
\end{equation*}
so that: 
\begin{equation*}
\chi_{\Gamma} ^S (w) \leq \sum_{i=1} ^k \lvert \Phi_n (g_i) \rvert_S
\leq 4 C (1+\sqrt{3})^{5n} \sum_{i=1} ^k \lvert g_i \rvert_{S_n}
\leq 4 C (n+1)^2 (1+\sqrt{3})^{5n}
\end{equation*}
(the second inequality being by Proposition \ref{RstLengthProp}). Thus: 
\begin{equation*}
\mathcal{A}_{\Gamma} ^S (n) \leq 4 C (n+1)^2 (1+\sqrt{3})^{5n} \preceq \exp (n). 
\end{equation*}
For the lower bound, let $w_m (x,y)=x^{2^m} \in F_2$. 
Suppose that $g,h \in \Gamma$ satisfies 
$\lvert g \rvert_S , \lvert h \rvert_S \leq l$. 
Then by Theorems \ref{Grig2grpthm} and \ref{Grigsmallorderthm}, 
$w_m (g,h) = e$ for all $m \leq \log_2 C + (3/2) \log_2 l$. Thus: 
\begin{center}
$\mathcal{A}_{\Gamma} ^S (2^m) 
\geq \chi_{\Gamma} ^S (w_m) \geq l +1
\geq 2^{2m/3}/C$. 
\end{center}
\end{proof}

\section{Thompson's group $\mathbf{F}$} \label{ThompSect}

In this Section we prove Theorem \ref{ThompMainThm}. 
We adopt the following model for Thompson's group $\mathbf{F}$. 

\begin{defn}
$\mathbf{F}$ is the subgroup of $\Homeo (\mathbb{R})$ 
generated by the homeomorphisms $A$ and $B$, where: 
\begin{equation*}
A(x)=x+1\text{ and }
B(x)=\left\{
  \begin{array}{@{}ll@{}}
    x & x\leq 0 \\
    2x & x \in (0,1] \\
    x+1 & x>1 
  \end{array}\right. 
\end{equation*} 
\end{defn}

\begin{propn} \label{ThompCharProp}
$\mathbf{F}$ is the group of orientation-preserving 
piecewise-linear homeomorphisms $H$ of $\mathbb{R}$ 
which are differentiable except at finitely many dyadic rational 
numbers; all of whose slopes are powers of $2$, 
and such that there exist integers $K$ and $L$ such that 
$H(x)=x+K$ for all $x \in \mathbb{R}$ sufficiently large 
and $H(x)=x+L$ for all $x \in \mathbb{R}$ sufficiently small.  
\end{propn}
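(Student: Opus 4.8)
The plan is the standard characterization argument for Thompson-type groups, carried out directly on the line. Write $G$ for the group of homeomorphisms described in the statement; I must show $\mathbf{F}=G$. For the inclusion $\mathbf{F}\subseteq G$, I would first observe $A,B\in G$: both are PL with all slopes powers of $2$, $A$ has no breakpoints while $B$ has breakpoints only at the dyadic points $0$ and $1$, and each is an integer translation outside a bounded set ($A(x)=x+1$ everywhere; $B(x)=x$ for $x\leq 0$ and $B(x)=x+1$ for $x>1$). Then I would check $G$ is a subgroup of $\Homeo(\mathbb{R})$; the only delicate point is closure under composition and inversion, which rests on the remark that a PL homeomorphism with dyadic breakpoints and power-of-$2$ slopes (and which is an integer translation near $\pm\infty$) carries dyadic rationals to dyadic rationals. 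Granting this, the breakpoints of $H_1H_2$ — which lie among the breakpoints of $H_2$ and the $H_2$-preimages of those of $H_1$ — are again dyadic; slopes multiply and reciprocate to powers of $2$; and the translation constants at $\pm\infty$ add, respectively subtract.

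For the reverse inclusion I take $H\in G$, with $H(x)=x+L$ for $x\ll 0$ and $H(x)=x+K$ for $x\gg 0$. Since $B^{k}$ is the identity on $(-\infty,0]$ and an integer translation by $k$ outside a bounded set, composing $H$ with suitable powers of $A$ and $B$, together with a conjugation by a power of $A$ to line the transition regions up in the right order, produces an element of the same $\langle A,B\rangle$-coset which is the identity outside a compact interval, and hence outside $[0,N]$ for some $N\in\mathbb{N}$. So it suffices to show that every $H\in G$ with $H|_{(-\infty,0]}=H|_{[N,\infty)}=\id$ lies in $\mathbf{F}$.

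For that, I would call a bounded interval \emph{standard dyadic} if it has the form $[j2^{-n},(j+1)2^{-n}]$ with $n\geq 0$ and $j\in\mathbb{Z}$, and a \emph{dyadic subdivision} of $\mathbb{R}$ an order-respecting partition of $\mathbb{R}$ into standard dyadic intervals together with two rays $(-\infty,p]$, $[q,\infty)$ ($p,q\in\mathbb{Z}$) agreeing with the unit partition $\ldots,[-1,0],[0,1],[1,2],\ldots$ outside a bounded set. The key lemma is: if $\mathcal{D}$ and $\mathcal{R}$ are dyadic subdivisions of $\mathbb{R}$ of the same combinatorial type, then the homeomorphism taking $\mathcal{D}$ to $\mathcal{R}$ affinely and order-preservingly on each piece lies in $\mathbf{F}$. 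I would prove it by induction on the number of bisections (``carets'') needed to pass from the unit partition to $\mathcal{D}$ (equivalently to $\mathcal{R}$): bisecting a single standard dyadic interval is realized, after translating by a power of $A$, by the doubling map $B$ or a conjugate $A^{-i}BA^{i}$ of it (or its inverse), and an arbitrary finite caret sequence assembles into a word in $A$ and $B$. Granting the lemma, for the normalized $H$ above I would choose a dyadic subdivision $\mathcal{D}$ fine enough that $H$ is affine on each piece of $\mathcal{D}$ and maps it onto a standard dyadic interval — possible because the breakpoints of $H$ are dyadic, its slopes are powers of $2$, and it is a translation near $\pm\infty$ — so that $\mathcal{R}:=H(\mathcal{D})$ is a dyadic subdivision of the same combinatorial type as $\mathcal{D}$, and $H$ is exactly the element of $\mathbf{F}$ furnished by the lemma.

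The main obstacle is the key lemma, i.e.\ upgrading ``every such $H$ is realized by elements of $G$'' to ``realized by words in $A$ and $B$''. The real work is the inductive step: one must verify that bisecting a prescribed standard dyadic interval inside a subdivision corresponds precisely to left- or right-multiplication by $A^{-i}BA^{i}$ or its inverse, and keep track of how bisections performed at different locations commute past one another, so that an arbitrary caret sequence genuinely produces an element of $\langle A,B\rangle$ rather than merely of $G$. By contrast, the reduction of a general $H\in G$ to a pair of dyadic subdivisions — existence of a common dyadic refinement on which $H$ is affine and ``standard-to-standard'' — is routine, and I would isolate it as a separate lemma.
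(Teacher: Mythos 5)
The paper does not prove this proposition at all: it is quoted as the standard characterization of $\mathbf{F}$ in the real-line model (the setting of Brin--Squier, and of Cannon--Floyd--Parry in the $[0,1]$ model), so your attempt has to be measured against that standard argument rather than against anything in the text. Your overall architecture is exactly the standard one and is sound: $A,B$ lie in the group $G$ of the statement, $G$ is closed under composition and inversion because such maps preserve dyadic rationals, and a general $H\in G$ can be normalized by powers of $A$ and $B$ (e.g.\ $B^{-(K-L)}A^{-L}H$) to be the identity off a compact interval, after which one wants to realize it by a pair of dyadic subdivisions of the same size. The ``routine'' lemma you isolate (a common refinement on which $H$ is affine and carries standard dyadic intervals to standard dyadic intervals) is indeed routine.

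The genuine gap is in the key lemma, which is the only substantive point, and the one concrete mechanism you offer for it is not correct as stated. There is no homeomorphism that ``bisects one standard dyadic interval'' and does nothing else, so a caret move is not literally multiplication by a fixed group element; and the conjugates $A^{-i}BA^{i}$ are, as PL maps, the maps that double the \emph{unit} interval $[i,i+1]$ and translate everything to its right by $1$ --- they never introduce breakpoints at non-integer dyadics, so they cannot by themselves account for bisecting a deeper interval such as $[0,\tfrac12]$ into $[0,\tfrac14],[\tfrac14,\tfrac12]$. Making the caret calculus work is precisely the content of the standard proof: one either develops the $x_n$-calculus (with $x_n=A^{-(n-1)}BA^{n-1}$ acting on tree-pair/subdivision-pair representatives, where ``add a caret at the $n$-th piece'' corresponds to multiplication by $x_n^{\pm1}$ after pulling the refinement back through the pair, leading to the positive-element/normal-form decomposition $H=PQ^{-1}$), or one runs Brin--Squier-style induction on the number of breakpoints, peeling off the extreme breakpoint by composing with a conjugate of $B$ by an element already known to lie in $\langle A,B\rangle$. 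You explicitly defer this (``the real work is the inductive step''), but that deferral, together with the inaccurate claim about $A^{-i}BA^{i}$, means the generation half of the proposition --- $G\subseteq\langle A,B\rangle$ --- is not actually established by your argument as written.
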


Our proof of Theorem \ref{ThompMainThm} is based closely 
on \cite{BrinSqui}, where the following was proved. 

\begin{thm}[Brin--Squier]
The group $\mathbf{F}$ is lawless. 
\end{thm}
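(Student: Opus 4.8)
The plan is to prove the theorem of Brin--Squier that $\mathbf{F}$ is lawless by showing that for any nontrivial reduced word $w \in F_k$ one can find homeomorphisms $H_1, \ldots, H_k \in \mathbf{F}$ with $w(H_1, \ldots, H_k) \neq \mathrm{id}$. The key structural idea is that one can work inside the subgroup $\mathbf{F}_I$ of elements supported in a single small interval $I = (\alpha, \beta)$ of dyadic rationals: by the piecewise-linear model, $\mathbf{F}_I \cong \mathbf{F}$, so it suffices to produce one nontrivial tuple in $\mathbf{F}$, which we do by a disjoint-support argument. The main point is: if $H_1, \ldots, H_k$ have supports that are ``sufficiently spread out'' then the word $w(H_1, \ldots, H_k)$ cannot be the identity because on a suitable interval it acts as a nontrivial single generator.

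The concrete steps I would carry out are as follows. First, choose $k$ disjoint open intervals $I_1, \ldots, I_k$ of dyadic rationals inside $(0,1)$, say $I_j = (a_j, b_j)$, and for each $j$ pick $H_j \in \mathbf{F}$ with $\mathrm{supp}(H_j) \subseteq I_j$ and $H_j$ nontrivial there (e.g. a conjugate of a fixed bump homeomorphism). Second, given a reduced word $w = x_{i_1}^{\epsilon_1} \cdots x_{i_\ell}^{\epsilon_\ell}$, track a point: pick a point $p$ near the left end of $I_{i_\ell}$ (or wherever the rightmost letter has nontrivial action) and follow its orbit under the successive generators appearing in $w$, arranging (by choosing the $H_j$ with ``staircase'' dynamics on $I_j$, i.e. $H_j(x) > x$ throughout $I_j$, so the orbit of a point is monotone and eventually escapes any compact subinterval) that the point is genuinely moved. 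The clean way to organize this is the standard ping-pong-free argument of Brin--Squier: one proceeds by induction on word length, maintaining an interval on which the current suffix of $w$ acts nontrivially, and at each step uses that the newly applied generator either fixes that interval pointwise (impossible, since it is in the support) or moves it to a new location where the next generator's support does or does not reach. The cancellation of reduced words is what prevents a generator from undoing the previous one.

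Alternatively, and perhaps more cleanly given that Corollary~\ref{KozThoCoroll}-style tricks are available, I could reduce to producing, for each fixed length bound $l$, a single tuple that works for all words of length $\le l$ simultaneously, but for mere lawlessness the per-word argument above suffices. A third, slicker route: embed a non-law-admitting group directly --- e.g., show $\mathbf{F}$ contains every finite symmetric group, or contains $\mathbf{F} \wr \mathbf{F}$, or use that $\mathbf{F}$ contains copies of $\mathbb{Z} \wr \mathbb{Z}$ and iterate; but since $\mathbb{Z} \wr \mathbb{Z}$ satisfies a law (it is metabelian), iterated wreath products are the right object, and one would invoke that the lengths of shortest laws for iterated wreath products of $\mathbb{Z}$ grow without bound. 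I would go with the direct dynamical argument since it is self-contained.

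The hard part will be making the disjoint-support / orbit-tracking bookkeeping rigorous while keeping it short: one must phrase the inductive invariant precisely (``there is a nonempty open subinterval $J$ of $I_{i_1}$, or of the image region, on which the suffix $x_{i_m}^{\epsilon_m} \cdots x_{i_\ell}^{\epsilon_\ell}$ is not the identity'') and verify that reducedness of $w$ prevents the invariant from collapsing when two consecutive letters involve the same variable. Once the lawlessness of $\mathbf{F}$ is in hand, the quantitative statement of Theorem~\ref{ThompMainThm} --- that $\mathcal{A}_{\mathbf{F}}^S(n) \ll n$ --- follows by checking that the $H_j$ and the displaced points witnessing $w(\mathbf{H}) \neq \mathrm{id}$ can be chosen with word-length $O(|w|)$, which is where making the Brin--Squier construction \emph{effective} comes in; I expect that to be the genuinely new work, bounding the $\mathbf{F}$-word-length of the conjugating elements needed to place the intervals $I_j$ and of the bump maps $H_j$ themselves as a function of $|w|$.
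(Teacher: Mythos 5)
There is a genuine gap, and it is at the heart of your construction: if $H_1,\ldots,H_k$ have pairwise disjoint supports $I_1,\ldots,I_k$, then they commute, so the subgroup they generate is abelian and your tuple lies in the vanishing set of every word with zero exponent sum in each variable. For instance $w=[x_1,x_2]$ is reduced and nontrivial, yet $w(H_1,H_2)=\mathrm{id}$ for any disjointly supported pair. Consequently the orbit-tracking induction cannot be carried through: when a letter $x_{i}^{\pm 1}$ is applied to a point lying in some other generator's interval, nothing moves, and reducedness of $w$ gives no protection, since letters in distinct variables simply do not interact. At best your argument shows that $\mathbf{F}$ satisfies no law outside the commutator subgroup of $F_k$, which is far from lawlessness. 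Any repair must force the supports of the chosen elements to overlap in an essential (ping-pong-like) way; ``sufficiently spread out'' supports are exactly the wrong regime.

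This is in fact what the Brin--Squier argument used in the paper does: one takes homeomorphisms $f_0,f_1$ of the circle whose squares generate a free group by ping-pong, lifts them to $\mathbb{R}$, and then truncates the lifts outside a window of size growing linearly in the word length (the elements $U_n$ and $V_n=A^2U_nA^{-2}$ of Section \ref{ThompSect}, whose supports overlap heavily). A word of length at most $n$ cannot see the truncation, so it evaluates nontrivially, giving both lawlessness and the linear bound of Theorem \ref{ThompMainThm}. Your ``third route'' via iterated wreath products inside $\mathbf{F}$ can be made to work (iterated wreath products of $\mathbb{Z}$ have no common law, e.g.\ because they contain free solvable groups of unbounded derived length and the derived series of $F_k$ intersects trivially), but as written it is only a remark: you neither construct the embeddings nor prove the needed statement about shortest laws, so it does not yet constitute a proof either.
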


The argument presented in \cite{BrinSqui} 
is constructive, 
and we extract from it a linear upper bound on 
$\mathcal{A}_{\mathbf{F}}$, as follows. 
Define $T : \mathbb{R} \rightarrow \mathbb{R}$ 
on each interval $[8n,8(n+1)]$ by: 
\begin{equation*}
T(x) = \left\{
  \begin{array}{@{}ll@{}}
    2 (x-4n) & x \in [8n,8n+1] \\
    x+1 & x \in [8n+1,8n+2] \\
    (x+(8n+4))/2 & x \in [8n+2,8n+6] \\
    x-1 & x \in [8n+6,8n+7] \\
    2(x-4(n+1)) & x \in [8n+7,8(n+1)] 
  \end{array}\right. 
\end{equation*}
(so that $T$ restricts to an orientation-preserving 
homeomorphism of $[8n,8(n+1)]$)
and set $U = T^2$. 
For $n\in\mathbb{N}$ define $U_n :\mathbb{R}\rightarrow\mathbb{R}$ by: 
\begin{equation*}
U_n (x) = \left\{
  \begin{array}{@{}ll@{}}
    U(x) & 0 \leq x \leq 8(n+1) \\
    x & \text{otherwise} 
  \end{array}\right. 
\end{equation*}
and $V_n = A^2 U_n A^{-2}$, so that $U_n$ and $V_n \in \mathbf{F}$. 

\begin{lem} \label{ThompLengthLem}
There exists $C > 0$ such that for all $n \in \mathbb{N}$, 
\begin{center}
$\big\lvert U_n \big\rvert_S , \big\lvert V_n \big\rvert_S \leq C (n+1)$. 
\end{center}
\end{lem}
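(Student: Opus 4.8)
The plan is to show that both $U_n$ and $V_n$ can be written as words of length $O(n)$ in the generators $A, B$ of $\mathbf{F}$. Since $V_n = A^2 U_n A^{-2}$ and $\lvert A \rvert_S$ is a fixed constant, it suffices to prove the bound for $U_n$; the claim for $V_n$ then follows with the constant $C$ increased by an additive term accounting for $\lvert A^4 \rvert_S = 4$. So the real content is a linear upper bound on $\lvert U_n \rvert_S$.

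First I would record that $U = T^2$ acts as the identity outside $\bigcup_{n \geq 0} [8n, 8(n+1)] = [0, \infty)$, and on each block $[8n, 8(n+1)]$ it acts by a fixed homeomorphism that is "translated" by $8$ from the action on the previous block; concretely $U$ commutes with $A^8$ on $[0,\infty)$, i.e. $A^{-8} U A^8$ agrees with $U$ on $[8, \infty)$. The homeomorphism $U_n$ is obtained from $U$ by "truncating" after the $(n+1)$-st block: it agrees with $U$ on $[0, 8(n+1)]$ and is the identity elsewhere. The standard trick (this is exactly the mechanism in \cite{BrinSqui}) is to build $U_n$ as a telescoping product of $A$-conjugates of a single fixed element $g_0 \in \mathbf{F}$ supported in one block: if $g_0 \in \mathbf{F}$ is supported in $[0,8]$ and realizes the restriction of $U$ there, and $g_j = A^{8j} g_0 A^{-8j}$ is its translate supported in $[8j, 8(j+1)]$, then $U_n = g_0 g_1 \cdots g_n$ because the $g_j$ have disjoint supports and their product agrees with $U$ on $[0, 8(n+1)]$ and is the identity past that. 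One must check that $g_0$ genuinely lies in $\mathbf{F}$, using Proposition \ref{ThompCharProp}: $U \mid_{[0,8]}$ is orientation-preserving, piecewise-linear with dyadic breakpoints and slopes powers of $2$, and fixes the endpoints $0$ and $8$ (one reads this off the explicit formula for $T$, hence for $U = T^2$), so it extends by the identity to an element of $\mathbf{F}$.

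Given this decomposition, $\lvert U_n \rvert_S \leq \sum_{j=0}^{n} \lvert g_j \rvert_S \leq \sum_{j=0}^{n} \big( 16 j + \lvert g_0 \rvert_S \big)$, which is quadratic in $n$ — not good enough. The fix, again following \cite{BrinSqui}, is to telescope more cleverly: instead of conjugating $g_0$ all the way out to block $j$ each time, write $U_n$ as a nested product where each successive conjugation is by a bounded power of $A$. The cleanest formulation: set $h_n = g_0$ and $h_{j} = g_0 \cdot A^8 h_{j+1} A^{-8}$ descending, so that $U_n = h_0$ expands to $g_0 (A^8 g_0 A^{-8})(A^8 (A^8 g_0 A^{-8}) A^{-8}) \cdots$; unwinding, each block contributes a fixed word $A^8 (\cdots) A^{-8}$ wrapping the remainder, and the total number of generator-letters used is $(n+1)$ copies of $\lvert g_0 \rvert_S$ plus $2 \cdot 8 \cdot (n+1)$ letters from the $A^{\pm 8}$ wrappers — linear in $n$. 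Thus $\lvert U_n \rvert_S \leq (n+1)(\lvert g_0 \rvert_S + 16)$, giving the claim with $C = \lvert g_0 \rvert_S + 16$ (and a slightly larger constant for $V_n$).

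The main obstacle is not the counting — that is routine once the right nested-telescoping form is chosen — but verifying that the intermediate and final objects genuinely lie in $\mathbf{F}$ and that the product formula for $U_n$ is correct as homeomorphisms. In particular one must confirm: (a) $g_0$ satisfies all the conditions of Proposition \ref{ThompCharProp}, which requires checking the slope and breakpoint conditions from the piecewise-linear formula for $T$ and noting $T$, hence $U$, fixes $0$ and $8$ and every multiple of $8$; (b) the supports of the translates $A^{8j} g_0 A^{-8j}$ are exactly $[8j, 8(j+1)]$ and pairwise disjoint, so the nested product's action on each block is precisely that of one $g_0$-translate and hence agrees with $U$; and (c) past block $n$ everything collapses to the identity, so the product equals $U_n$ and not $U$. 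Once these verifications are in place the length bound is immediate, and passing from $U_n$ to $V_n = A^2 U_n A^{-2}$ costs only the constant $4$.
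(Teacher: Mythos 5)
Your proposal is correct and is essentially the paper's argument: the paper verifies $U_0\in\mathbf{F}$ via Proposition \ref{ThompCharProp} and uses the recursion $U_{n+1}=A^8U_nA^{-8}U_0$ (your nested telescoping is the same recursion, with the commuting disjointly-supported factors in the opposite order), giving $\lvert U_n\rvert_S\leq (M+16)n+M$ and $\lvert V_n\rvert_S\leq (M+16)n+M+4$. The detour through the quadratic estimate is unnecessary but harmless, and your supporting checks (endpoints fixed, dyadic breakpoints, power-of-$2$ slopes, disjoint supports) are exactly what the paper leaves implicit.
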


\begin{proof}
By Proposition \ref{ThompCharProp}, $U_0 \in \mathbf{F}$. 
Let $M = \lvert U_0 \rvert_S \in \mathbb{N}$. 
Then $U_{n+1} = A^8 U_n A^{-8} U_0$, 
so $\lvert U_n \rvert_S \leq (M+16)n + M$ 
and $\lvert V_n \rvert_S \leq (M+16)n + M+4$. 
\end{proof}

\begin{propn} \label{Thomp}
Let $w \in F_2$ be non-trivial, with $\lvert w \rvert \leq n$. 
Then $w (U_n,V_n) \neq e$. 
\end{propn}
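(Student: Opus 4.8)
The plan is to make quantitative the dynamical argument of Brin and Squier \cite{BrinSqui}, by tracking a single point through the letters of $w$. The first step is to pin down the dynamics of $U_n$ and $V_n$ on $\mathbb{R}$. A direct check from the formula for $T$ shows that $T$ fixes every point of the lattice $\{ 4m : 0 \leq m \leq 2(n+1) \}$, that it restricts to an increasing homeomorphism of each block $[8m, 8(m+1)]$, and that on such a block the midpoint $8m+4$ is an attracting fixed point: $T$, and hence $U = T^2$ and hence $U_n$, sends every interior point of $(8m, 8m+4)$ strictly to the right and every interior point of $(8m+4, 8m+8)$ strictly to the left, with no further fixed point in $(8m, 8m+8)$. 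Consequently $\supp(U_n)$ is the disjoint union of the intervals $(4j, 4j+4)$ for $0 \leq j \leq 2n+1$, with $U_n$ driving the interior of $(4j, 4j+4)$ towards $4j+4$ when $j$ is even and towards $4j$ when $j$ is odd; conjugating by $A^2$ gives the same picture for $V_n$, whose support is the disjoint union of the intervals $(4j+2, 4j+6)$. The key structural point, which is the source of lawlessness, is that the two pictures are \emph{staggered}: the fixed-point lattice of $U_n$ is a segment of $4\mathbb{Z}$, that of $V_n$ a segment of $4\mathbb{Z}+2$, so every support interval of $V_n$ straddles a fixed point of $U_n$ and every support interval of $U_n$ straddles a fixed point of $V_n$.

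The second step is to recast the conclusion as a point-tracking statement. Write the reduced word $w(U_n, V_n) = s_\ell s_{\ell-1} \cdots s_1$, with each $s_i \in \{ U_n^{\pm 1}, V_n^{\pm 1} \}$ and $\ell = \lvert w \rvert \leq n$; fix a starting point $p_0 \in \mathbb{R}$ and set $p_i = s_i(p_{i-1})$, so that $w(U_n, V_n)(p_0) = p_\ell$ and it suffices to choose $p_0$ with $p_\ell \neq p_0$. The reason $U_n$ and $V_n$ are assembled from $n+1$ successive blocks is exactly so that a word of length at most $n$ can be resolved with one block to spare: one chooses $p_0$ in the first block and arranges the partial orbit $p_0, p_1, \ldots, p_\ell$ to advance, consuming at most one new block per letter, either into strictly higher-indexed blocks or, within a block, through a strictly nested sequence of subintervals cut out by the breakpoints of $U_n$ and $V_n$. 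I expect the cleanest formalisation to be an induction on $\ell$, with the statement strengthened to record the advancing partial orbit: given $w$ of length $\ell$, write $w = w' \cdot t$ in $F_2$ with $t$ the last letter and $w'$ reduced of length $\ell - 1 \leq n - 1$; apply the inductive hypothesis to $w'$ to obtain an advancing partial orbit living in the first $\ell$ blocks; then verify that the outermost generator $s_\ell$ (coming from $t$, which by reducedness of $w$ does not cancel against the previous letter) either extends the advance or carries the point across a fixed point of $s_\ell$ and hence, by the staggering, into the interior of a fresh support interval of the other generator in the so-far-unused $(\ell+1)$-st block, which exists because $\ell + 1 \leq n + 1$.

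The main obstacle is the presence of inverse letters $U_n^{-1}$, $V_n^{-1}$ in $w$: these move the tracked point in the direction opposite to their non-inverted counterparts, so there is no hope of making the partial orbit globally monotone. The resolution, and the delicate core of the argument, is to keep $p_i$ always close to a suitable boundary point of the support component it currently occupies --- on the side from which the inverse letter available at that step can expel it only a bounded, controlled distance, never past the adjacent fixed endpoint of that component, outside of which the corresponding generator is the identity --- while the non-inverted letters push it decisively across a fixed point and so, by the offset-by-$2$ staggering, into the interior of a support interval of the \emph{other} generator. Careful bookkeeping of which blocks have been used, together with the inequality $\ell \leq n$, then shows the orbit can never close up, so $p_\ell \neq p_0$. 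Together with Lemma \ref{ThompLengthLem}, this yields the linear upper bound of Theorem \ref{ThompMainThm}.
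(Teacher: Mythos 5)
Your static description of $U_n$ and $V_n$ is correct (each fixes a segment of $4\mathbb{Z}$, respectively $4\mathbb{Z}+2$, and pushes each length-$4$ component monotonically toward one endpoint), but the proof plan has a genuine gap at exactly the step you defer, and the mechanism you sketch there misdescribes the dynamics. No letter ever pushes a point across a fixed point of the generator being applied, and a single application need not push it across a fixed point of the \emph{other} generator either: a point of $(2,4)$ is sent by $U_n$ into $(3.5,4)$, so it crosses no fixed point of $V_n$ (everything stays inside $(2,6)$). Inverse letters are not ``weaker'' than positive ones --- $U_n^{-1}$ has the same picture with attracting and repelling endpoints exchanged --- so there is no asymmetry of the kind you hope to exploit (``expelled only a bounded distance, never past the adjacent fixed endpoint'' is equally true of positive letters). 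Nor do orbits advance one fresh block per letter: $U_n^{k}$ confines its whole orbit to a single length-$4$ component for every $k$, and an alternation of generators enlarges the reachable region by only about $2$ on each side. So the strengthened inductive invariant (prefix orbit advancing into higher-indexed blocks or through nested subintervals, never closing up) is precisely what would have to be proved, and nothing in your sketch establishes it; the ``delicate core'' is asserted, not argued.

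The paper's proof (following \cite{BrinSqui}, Section 4) runs differently, and it is also the natural way to repair your argument. First, the \emph{untruncated}, $8$-periodic maps $U=T^2$ and $V=A^2UA^{-2}$ generate a free group of rank $2$: this is ping-pong on $\mathbb{R}/8\mathbb{Z}$ (equivalently on $\mathbb{R}$) with the sets $X_0=\bigcup_j(4j-1,4j+1)$ and $X_1=X_0+2$, neighbourhoods of your two staggered lattices; every nonzero power of $U$ maps $X_1$ into $X_0$ and symmetrically for $V$. Second, a displacement estimate makes the truncation invisible: since a point can never cross a fixed point of the generator applied to it, each letter moves it by less than $4$, so the prefix orbit of a word of length at most $n$ has diameter $O(n)$; because the set where $w(U,V)\neq\mathrm{id}$ is nonempty and invariant under translation by $8$, one may choose $p_0$ near the middle of $[0,8(n+1)]$ so that $w(U,V)(p_0)\neq p_0$ and the entire prefix orbit stays in the region (roughly $[2,8(n+1)]$) where $U_n,V_n$ agree with $U,V$; hence $w(U_n,V_n)(p_0)=w(U,V)(p_0)\neq p_0$. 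This is what the $n+1$ blocks are really for --- giving the window room to contain an orbit of linear diameter --- not a supply of one new block per letter. If you want a self-contained proof rather than the paper's citation of Brin--Squier, establish the ping-pong inclusions for the explicit sets above and then do this window bookkeeping; that replaces the induction you left open.
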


\begin{proof}
This is essentially the content of \cite{BrinSqui} Section 4; 
we give a sketch. 
In \cite{BrinSqui} there are defined homeomorphisms $f_0,f_1$ of $S^1$, 
such that $f_0 ^2, f_1 ^2$ freely generate a rank-$2$ 
free subgroup of $\Homeo (S^1)$. 
This is shown using a ping-pong argument. 
Identifying $S^1$ with $\mathbb{R}/8 \mathbb{Z}$ and 
letting $\pi : \mathbb{R} \rightarrow S^1$ be the induced covering map, 
it follows that for any continuous lifts $\tilde{f}_0, \tilde{f}_1$ 
of $f_0,f_1$ to this cover, 
$\tilde{f}_0 ^2, \tilde{f}_1 ^2$ freely generate a rank-$2$ 
free subgroup of $\Homeo (\mathbb{R})$. 
As per the description of $f_0$ given in \cite{BrinSqui}, 
our map $T$ is such a lift of $f_0$ to $\mathbb{R}$ fixing $0$. 

It is then shown that similarly, if $\tilde{f}_0$ is a continuous lift 
of $f_0$ to $\mathbb{R}$ fixing $0$; 
and $g_0 : \mathbb{R} \rightarrow \mathbb{R}$ is given by: 
\begin{equation*}
g_0 (t) = \left\{
  \begin{array}{@{}ll@{}}
    \tilde{f}_0 ^2 (t) & t \in [-4(n+1),4(n+1)] \\
    t & \text{otherwise}
  \end{array}\right. 
\end{equation*}
(so that $g_0 \in \Homeo (\mathbb{R})$), 
and $g_1 = A^2 g A^{-2}$, 
then $w(g_0,g_1)\neq e$ in $\Homeo (\mathbb{R})$
($T_2$ is the notation of \cite{BrinSqui} is $A$ in our notation 
and actions in \cite{BrinSqui} are on the right while 
ours are on the left). 
Taking $\tilde{f}_0 =T$ as above, we have $U_n =A^{4(n+1)} g_0A^{-4(n+1)}$ 
and $V_n = A^{4(n+1)} g_1A^{-4(n+1)}$, 
so $w(U_n,V_n) = A^{4(n+1)} w(g_0,g_1) A^{-4(n+1)} \neq e$ also. 
\end{proof}

\begin{proof}[Proof of Theorem \ref{ThompMainThm}]
By Lemma \ref{ThompLengthLem} and Proposition \ref{Thomp}, 
any non-trivial $w \in F_2$, with $\lvert w \rvert \leq n$, 
satisfies $\chi_{\mathbf{F}} ^{\lbrace A,B \rbrace} (w) \leq 2C(n+1)$. 
\end{proof}

\section{Residual finiteness growth}\label{RFSect}

Let $\mathcal{C}$ be a class of finite groups. 
Recall that $\Gamma$ is \emph{residually $\mathcal{C}$} if, 
for every $1 \neq g \in \Gamma$, 
there exists $Q \in \mathcal{C}$ and a surjective 
homomorphism $\pi : \Gamma \rightarrow Q$ such that $\pi(g) \neq 1$. 
In this case we denote by $D_{\Gamma,\mathcal{C}} (g)$ 
the minimal value of $\lvert Q \rvert$ among $Q \in \mathcal{C}$ 
admitting such a homomorphism $\pi$. 
If $S$ is a finite generating set for $\Gamma$, 
then the \emph{residual $\mathcal{C}$-finiteness growth function} 
of $\Gamma$ (with respect to $S$) is: 
\begin{center}
$\mathcal{F}_{\Gamma,\mathcal{C}} ^S (n) 
= \max \lbrace D_{\Gamma,\mathcal{C}} (g) : \lvert g \rvert_S \leq n \rbrace$. 
\end{center}
Residual finiteness growth was introduced by Bou-Rabee \cite{BouRab}
and has been extensively studied for a wide variety of 
residually finite groups. 
Two classes $\mathcal{C}$ which are of particular interest 
are the class of \emph{all} finite groups, and the class 
of all \emph{$p$-groups} (for $p$ a fixed prime). 
In these cases we denote the function 
$\mathcal{F}_{\Gamma,\mathcal{C}} ^S$ by 
$\mathcal{F}_{\Gamma} ^S$ 
and $\mathcal{F}_{\Gamma,p} ^S$, 
respectively. 

\begin{propn} \label{LawsRFprop}
Let $W \subseteq N_2 (\Gamma)$. 
Let $f : \mathbb{N} \rightarrow \mathbb{N}$ 
and suppose that for all $l$, 
there exists $w_l \in W$ of length at most $f(l)$ 
which is a law for every member of $\mathcal{C}$ 
of order at most $l$. Then: 
\begin{equation}
\mathcal{F}_{\Gamma,\mathcal{C}} ^S 
\big(f(l)\cdot (\mathcal{A}_{\Gamma,W} ^S \circ f)(l) \big) 
> l \text{.}
\end{equation}
\end{propn}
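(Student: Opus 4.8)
The plan is to turn the hypothesis into a single ``hard to detect'' element of $\Gamma$ and then simply unwind the definition of $\mathcal{F}_{\Gamma,\mathcal{C}}^S$. Fix $l\in\mathbb{N}$ and let $w_l\in W$ be as in the hypothesis, so $|w_l|\le f(l)$ and $w_l$ is a law for every $Q\in\mathcal{C}$ with $|Q|\le l$. Since $w_l\in W\subseteq N_2(\Gamma)$, the word $w_l$ is not a law for $\Gamma$, so $\chi_\Gamma^S(w_l)$ is finite, and as $\mathcal{A}_{\Gamma,W}^S$ is nondecreasing,
\[
\chi_\Gamma^S(w_l)\le\mathcal{A}_{\Gamma,W}^S(|w_l|)\le\mathcal{A}_{\Gamma,W}^S(f(l))=(\mathcal{A}_{\Gamma,W}^S\circ f)(l).
\]
Accordingly, pick $(g_1,g_2)\in\Gamma^2$ realizing (at most) the complexity of $w_l$, i.e.\ with $w_l(g_1,g_2)\ne 1$ and $|g_1|_S+|g_2|_S\le(\mathcal{A}_{\Gamma,W}^S\circ f)(l)$, and set $g:=w_l(g_1,g_2)$.

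Next I would estimate $|g|_S$. Writing $w_l$ as a reduced word of length $|w_l|\le f(l)$ in $x_1^{\pm1},x_2^{\pm1}$ and substituting, $g$ is a product of at most $f(l)$ factors each equal to some $g_j^{\pm1}$, so
\[
|g|_S\le|w_l|\cdot\max\{|g_1|_S,|g_2|_S\}\le f(l)\cdot\big(|g_1|_S+|g_2|_S\big)\le f(l)\cdot(\mathcal{A}_{\Gamma,W}^S\circ f)(l).
\]
Now let $\pi:\Gamma\twoheadrightarrow Q$ be any epimorphism onto a member of $\mathcal{C}$ with $\pi(g)\ne 1$. Then $w_l\big(\pi(g_1),\pi(g_2)\big)=\pi(g)\ne 1$, so $w_l$ is not a law for $Q$; by the choice of $w_l$ this forces $|Q|>l$. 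Hence $D_{\Gamma,\mathcal{C}}(g)>l$ (and if no such $Q$ exists at all, this value is $+\infty>l$). Since $g$ has length at most $f(l)\cdot(\mathcal{A}_{\Gamma,W}^S\circ f)(l)$, it follows directly from the definition of $\mathcal{F}_{\Gamma,\mathcal{C}}^S$ (using monotonicity) that
\[
\mathcal{F}_{\Gamma,\mathcal{C}}^S\big(f(l)\cdot(\mathcal{A}_{\Gamma,W}^S\circ f)(l)\big)\ge D_{\Gamma,\mathcal{C}}(g)>l,
\]
which is the claim.

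I do not anticipate a real obstacle here: the argument is just a chain of the definitions of complexity, of $\mathcal{A}_{\Gamma,W}^S$ and of $\mathcal{F}_{\Gamma,\mathcal{C}}^S$, together with the elementary bound on the word-length of a substituted word. The only points meriting care are the bookkeeping with the two nested length bounds (the word has length $\le f(l)$, while the substituted generators already have total length $\le(\mathcal{A}_{\Gamma,W}^S\circ f)(l)$, so the product sits at length $\le f(l)\cdot(\mathcal{A}_{\Gamma,W}^S\circ f)(l)$), the use of monotonicity of both growth functions, and the degenerate case where $\Gamma$ has no quotient in $\mathcal{C}$ separating $g$, in which case the asserted inequality holds trivially with the convention $D_{\Gamma,\mathcal{C}}(g)=\infty$.
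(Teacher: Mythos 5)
Your proposal is correct and follows essentially the same argument as the paper: take the pair witnessing $\chi_\Gamma^S(w_l)\le\mathcal{A}_{\Gamma,W}^S(f(l))$, form the element $w_l(g_1,g_2)$, bound its word-length by $f(l)\cdot\mathcal{A}_{\Gamma,W}^S(f(l))$, and observe that any quotient in $\mathcal{C}$ detecting it must have order exceeding $l$. Your extra remarks (monotonicity bookkeeping and the degenerate case) only make explicit what the paper leaves implicit.
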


\begin{proof}
Let $w_l \in W$ be as in the statement. 
There exist $g,h \in \Gamma$ such that: 
\begin{equation*}
\max \lbrace\lvert g \rvert_S ,\lvert h \rvert_S \rbrace 
\leq \lvert g \rvert_S + \lvert h \rvert_S
\leq \mathcal{A}_{\Gamma,W} ^S \big( f(l) \big)
\end{equation*}
and $w_l (g,h) \neq e$, while by construction, 
$D_{\Gamma,\mathcal{C}} \big( w_l (g,h) \big) > l$ and: 
\begin{equation*}
\lvert w_l (g,h)\rvert_S 
\leq \lvert w_l\rvert\cdot
\max\lbrace\lvert g\rvert_S ,\lvert h\rvert_S\rbrace 
\leq f(l)\cdot\mathcal{A}_{\Gamma,W} ^S \big( f(l) \big)\text{.}
\end{equation*}
\end{proof}

Proposition \ref{LawsRFprop} is useful in the presence 
of good upper bounds on the function $f$. 
The best currently known bounds for the classes 
of finite groups and finite $p$-groups are as follows, 
taken from \cite{BradThom} and \cite{Elka}, respectively. 

\begin{thm} \label{BThomThm}
For each $n \in \mathbb{N}$ there exists $1 \neq w_n \in F_2$ 
such that: 
\begin{itemize}
\item[(i)] For all finite groups $G$ of order at most $n$, 
$w_n$ is a law for $G$; 
\item[(ii)] For all $\delta > 0$, 
$\lvert w_n \rvert = O_{\delta} (n^{2/3} \log(n)^{3+\delta})$. 
\end{itemize}
\end{thm}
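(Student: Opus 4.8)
The plan is to reduce, via the structure theory of finite groups, to the construction --- uniformly in $N$ --- of a short two-variable word $u_N$ that is a common law for all finite \emph{simple} groups of order at most $N$, and then to build $u_N$ family by family according to the classification, merging the pieces with Proposition \ref{KozThoProp}. Almost all of the work goes into $u_N$, the relevant case being $N = n$.

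For the reduction, given $G$ with $|G| \le n$, let $R = \mathrm{Rad}(G)$ be the solvable radical: it is solvable of order at most $n$, hence of derived length $O(\log\log n)$, so it satisfies a two-variable iterated-commutator law $\delta$ of length $\log(n)^{O(1)}$. The quotient $G/R$ has trivial solvable radical, so every layer of its socle series is a direct product of nonabelian simple groups, and a layer built from simple groups of order at most $K$ is killed in one shot by $u_K$; moreover the socle (generalized Fitting) height of $G/R$ is $O(\log\log n)$, since adjoining a socle layer multiplies the order by at least a fixed power of its current size. Since nesting two-variable laws along a normal series merely multiplies their lengths (the two variables being reused), and since the product of $|T|^{2/3}$ over the simple constituents of the socle series is at most $|G/R|^{2/3} \le n^{2/3}$, composing $\delta$ with the nested layer-laws yields a law for $G$ of length $\log(n)^{O(1)} \cdot n^{2/3 + o(1)}$. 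So the problem reduces to constructing $u_N$ of length $O(N^{2/3}\log(N)^{O(1)})$.

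For $u_N$: the abelian simple groups $C_p$ are killed by $[x,y]$; the sporadic groups share a law of bounded length; and the alternating groups $\mathrm{Alt}(m)$ occurring (with $m!/2 \le N$, so $m \ll \log N / \log\log N$) are killed, as symmetric groups, by $x^{\mathrm{lcm}(1,\dots,m)}$ of length $N^{o(1)}$. All the weight is carried by the groups of Lie type, and among these by the rank-one family: $\mathrm{PSL}_2(q)$ has order $\asymp q^3$, so all prime powers $q \ll N^{1/3}$ occur, and every element of it is unipotent (of order the characteristic $p$) or semisimple (of order dividing one of $(q \mp 1)/\gcd(2, q-1) \le (q+1)/2$). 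Raising an arbitrary element to the power $N_q := \mathrm{lcm}\bigl((q-1)/\gcd(2,q-1),\, (q+1)/\gcd(2,q-1)\bigr) \asymp q^2$ annihilates every semisimple part and lands inside the unipotent variety; the unipotent subgroups being elementary abelian, a commutator of two elements of a common one vanishes, and the two relevant unipotent elements can be driven into a single such subgroup by conjugating with a suitable Weyl/Bruhat element, using the $\mathrm{SL}_2(\mathbb{F}_q)$ structure theory underlying the Lie-type estimates of \cite{BradThom}. This produces a law $w_q$ for $\mathrm{PSL}_2(q)$ of length $\asymp q^2$, whence the exponent $2/3$: at $q \asymp N^{1/3}$ one has $|w_q| \asymp N^{2/3}$, while every other Lie-type family $X_r(q)$ has its $q$ so constrained by the order bound that the resulting law has length $\ll N^{3/5}$ (the extreme case being the Suzuki groups).

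The step I expect to be the main obstacle is the \emph{efficient amalgamation of the rank-one laws into a single word}. Feeding the $\asymp N^{1/3}$ words $w_q$, each of length $\asymp N^{2/3}$, through Proposition \ref{KozThoProp} directly costs $\asymp (N^{1/3})^2 \cdot N^{2/3} = N^{4/3}$, far above the target, and dyadic grouping of the $q$'s does not help, since the cost is dominated by the top range; nor can a single word handle all $q$ at once, since the exponents $N_q$ have common multiple $\exp(\Theta(N^{1/3}))$ over $q \le N^{1/3}$. Surmounting this requires a bespoke merging that exploits the common power-and-commutator shape of the $w_q$ to combine them essentially in parallel, paying only a $\log(N)^{O(1)}$ factor rather than the generic quadratic blow-up; extracting from this --- together with the cost of the structural reduction and of the applications of Proposition \ref{KozThoProp} --- the precise exponent $3 + \delta$ on $\log n$ is where the real care is needed. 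The structural reduction and the abelian, sporadic and alternating cases, by contrast, are comparatively routine, if technical.
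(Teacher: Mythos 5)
This theorem is not proved in the paper at all: it is quoted verbatim from \cite{BradThom}, so the only question is whether your sketch would reconstruct that result, and as it stands it would not. The most serious problem is one you name yourself: the efficient amalgamation of the rank-one laws. You correctly compute that feeding the $\asymp n^{1/3}$ words for the groups $\mathrm{PSL}_2(q)$, $q \ll n^{1/3}$, each of length $\asymp n^{2/3}$, through Proposition \ref{KozThoProp} costs $\asymp n^{4/3}$, and that dyadic grouping does not help; you then postulate ``a bespoke merging \ldots essentially in parallel, paying only a $\log(n)^{O(1)}$ factor''. That merging is precisely the technical core of \cite{BradThom}; asserting its existence is assuming the theorem rather than proving it, so the argument has a hole exactly where the new idea is required.

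There is also a gap earlier, in the construction of the individual law $w_q$ for $\mathrm{PSL}_2(q)$. After applying $x \mapsto x^{N_q}$ you hold two unipotent elements which in general lie in different Borel subgroups (generically they generate the whole group), and ``driving them into a single unipotent subgroup by conjugating with a suitable Weyl/Bruhat element'' requires a conjugator depending on the input pair --- which is exactly what a word may not do; no fixed word in $x,y$ can play that role, and the known short laws for $\mathrm{PSL}_2(q)$ are obtained by genuinely different arguments. Finally, even granting both steps, your reduction along the solvable radical and socle series, with $O(\log\log n)$ nested layers each costing an extra $\log(n)^{O(1)}$ factor (these factors multiply, not add), delivers $n^{2/3+o(1)}$ rather than the stated $O_{\delta}\big(n^{2/3}\log(n)^{3+\delta}\big)$. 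The overall shape --- reduction to simple groups, $\mathrm{PSL}_2(q)$ with $q \asymp n^{1/3}$ as the bottleneck --- is consistent with the cited source, but the two steps above are where the theorem actually lives.
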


\begin{thm} \label{ElkaThm}
For each $m \in \mathbb{N}$ there exists $1\neq w_m ^{\prime}\in F_2$ 
such that: 
\begin{itemize}
\item[(i)] For all nilpotent groups $G$ of class at most $m$, 
$w_m ^{\prime}$ is a law for $G$ 
(in particular, $w_m ^{\prime}$ is a law for every finite $p$-group 
of order at most $p^m$); 
\item[(ii)] $\lvert w_m ^{\prime} \rvert = O (m^{\alpha})$, 
where $\alpha = \log(2)/(\log(1+\sqrt{5})-\log(2)) \approx 1.440$. 
\end{itemize}
\end{thm}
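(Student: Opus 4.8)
The plan is to follow Elkasapy's construction \cite{Elka}. First one makes the standard reduction: a word $w$ is a law for every nilpotent group of class at most $m$ if and only if $w$ lies in the $(m+1)$-st term $\gamma_{m+1}(F)$ of the lower central series of the free group $F$ on the variables of $w$, and since $F_2$ is not nilpotent we have $\gamma_{m+1}(F_2)\neq 1$, so the content is purely quantitative. A finite $p$-group of order at most $p^m$ has nilpotency class strictly less than $m$ (its lower central series strictly decreases in at most $m$ steps), which accounts for the parenthetical claim. Hence it suffices to exhibit, for every $d\in\mathbb{N}$, a nontrivial element of $\gamma_d(F_2)$ of length $O(d^{\alpha})$, and then take $w_m' $ to be such an element for $d=m+1$.

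The element is produced recursively. I would build a sequence $w_0,w_1,w_2,\ldots\in F_2$ by iterated commutator brackets, of the general shape $w_{n+1}=[\,w_n,\,\phi_n(w_{n-1})\,]$ up to conjugations and similar tweaks, where each $\phi_n$ is a length-controlled automorphism of $F_2$ inserted to break symmetry and the precise form of the recursion is chosen to force substantial cancellation. Two quantities are tracked: the \emph{weight} $d_n$, the largest $d$ with $w_n\in\gamma_d(F_2)$, and the \emph{length} $\ell_n=\lvert w_n\rvert$. One arranges that $d_{n+1}\geq d_n+d_{n-1}$, so that $d_n$ grows at least like the Fibonacci numbers and hence $d_n\gg\varphi^{\,n}$ with $\varphi=(1+\sqrt5)/2$, while the cancellation brings the length recursion down from the naive $\ell_{n+1}\leq 2\ell_n+2\ell_{n-1}$ (dominant root $1+\sqrt3$) to one whose dominant root is $2$ (morally, the two outer occurrences of $w_n$ in the bracket collapse to one), so $\ell_n\ll 2^{\,n}$. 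Since even a balanced construction would give exponent $2$, the point is precisely this improvement. Then reaching weight $d\geq m+1$ costs $n\approx\log_{\varphi}m$ steps, giving a word of length $\ll 2^{\log_{\varphi}m}=m^{\log_{\varphi}2}=m^{\alpha}$, where $\alpha=\log(2)/\log(\varphi)=\log(2)/\big(\log(1+\sqrt5)-\log(2)\big)$ is exactly the ratio of the two geometric rates.

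The hard part, and the step I expect to be the main obstacle, is the \emph{non-degeneracy} of the construction: one must verify that each $w_n$ really has weight of the predicted order — in particular that it is nontrivial in $F_2$ — rather than collapsing into a higher term of the lower central series, and one must do so while keeping enough structural control to make the cancellations genuine. For this I would use the integral Magnus embedding $\mu:F_2\hookrightarrow\mathbb{Z}\langle\langle X,Y\rangle\rangle$, $x\mapsto 1+X$, $y\mapsto 1+Y$ (the abelian-coefficient analogue of the embedding in Definition \ref{GSDefn}): writing $\mu(w_n)=1+A_n+(\text{terms of higher degree})$ with $A_n$ homogeneous of degree $d_n$, the bracket sends the leading pair $(A_n,A_{n-1})$ to the associative-algebra commutator $[A_n,A_{n-1}]$, which is nonzero precisely when $A_n$ and $A_{n-1}$ are not both scalar multiples of powers of a common element — the free-algebra manifestation of the principle (compare Lemma \ref{cycClem}) that centralizers in a free object are as small as possible. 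The job of the automorphisms $\phi_n$ and of a careful choice of base words $w_0,w_1$ is exactly to keep these leading terms in general position at every stage, and verifying this requires the precise combinatorics of the free Lie ring on two generators rather than soft estimates. By comparison the remainder is bookkeeping: bounding $\ell_n$ by tracking the cancellations in the iterated brackets — a bounded computation at each step, uniform in $n$ — and then combining the two recursions to read off the exponent $\alpha$.
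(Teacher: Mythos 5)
There is nothing in the paper to compare against here: Theorem \ref{ElkaThm} is not proved in the paper but imported as a black box from \cite{Elka} (exactly as Theorem \ref{BThomThm} is imported from \cite{BradThom}), and only its statement is used, as input to Proposition \ref{LawsRFprop}. So what you have written is a reconstruction of Elkasapy's argument. Your reduction is correct (a word is a law for all nilpotent groups of class at most $m$ iff it lies in $\gamma_{m+1}$ of the free group on its variables, since the free nilpotent group of class $m$ is $F/\gamma_{m+1}(F)$), and so is the numerology: a sequence with weight recursion $d_{n+1}\geq d_n+d_{n-1}$ and length recursion of dominant root $2$ does give a nontrivial element of $\gamma_{m+1}(F_2)$ of length $O(m^{\alpha})$ with $\alpha=\log 2/\log\varphi$, which is indeed the shape of Elkasapy's construction.

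As a proof, however, the proposal has a genuine gap, and it is the one you flag yourself: the words are never constructed. The base words, the automorphisms $\phi_n$, and above all the structural relation between $w_n$ and $\phi_n(w_{n-1})$ (e.g.\ a shared prefix, so that in $w_n^{-1}\phi_n(w_{n-1})^{-1}w_n\phi_n(w_{n-1})$ the short factor is absorbed) that lowers the length recursion from dominant root $1+\sqrt3$ to dominant root $2$ are all left unspecified (``up to conjugations and similar tweaks'', ``chosen to force substantial cancellation''); moreover such a relation must be shown to \emph{propagate} under the recursion, a design constraint you never address. The same applies to non-degeneracy: the assertion that the Magnus leading terms can be kept ``in general position at every stage'' is precisely the inductive invariant one must formulate and verify (and if a leading associative commutator vanishes, the induction loses control of the next leading term even when $w_{n+1}\neq 1$), and the Bergman-type centralizer criterion has to be checked for the specific leading terms the actual recursion produces. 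You explicitly call this the main obstacle and leave it open; but this simultaneous verification \emph{is} the content of \cite{Elka}. As written, the argument proves only the conditional statement ``if such a sequence exists, then $\lvert w_m'\rvert=O(m^{\alpha})$'', which is the easy half. (A trivial slip besides: a group of order $p$ has class $1=m$, so ``class strictly less than $m$'' fails at $m=1$, though the bound class $\leq m$ — which is all the parenthetical needs — still holds.)
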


\begin{coroll} \label{AllgrpsCoroll}
Let $\Gamma$ be a finitely generated lawless group 
and let $\delta > 0$. Then: 
\begin{equation*}
\mathcal{F}_{\Gamma} \big( l \mathcal{A}_{\Gamma} (l) \big) 
\succeq l^{3/2} / \log (l)^{9/2 + \delta}\text{.}
\end{equation*}
\end{coroll}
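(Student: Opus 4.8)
The plan is to apply Proposition \ref{LawsRFprop} with $\mathcal{C}$ the class of \emph{all} finite groups, $W = F_2 \setminus \lbrace 1 \rbrace$ (so that $\mathcal{A}_{\Gamma,W}^S = \mathcal{A}_{\Gamma}$), and the function $f$ supplied by Theorem \ref{BThomThm}. First I would fix $\delta > 0$ and set $\delta' = \delta/\text{(something)}$ to be tuned at the end; Theorem \ref{BThomThm} then gives, for each $l$, a nontrivial word $w_l = w_{\lfloor l \rfloor} \in F_2$ which is a law for every finite group of order at most $l$, with $\lvert w_l \rvert = f(l) = O_{\delta'}(l^{2/3} \log(l)^{3 + \delta'})$. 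This is exactly the hypothesis of Proposition \ref{LawsRFprop}, so we obtain
\begin{equation*}
\mathcal{F}_{\Gamma}^S\big( f(l) \cdot (\mathcal{A}_{\Gamma}^S \circ f)(l) \big) > l.
\end{equation*}

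The next step is to massage this into the asserted inequality involving $\mathcal{F}_{\Gamma}(l\,\mathcal{A}_{\Gamma}(l))$ up to $\succeq$. The idea is to substitute $l \mapsto m := f(l)$, or rather to observe that since $\mathcal{A}_{\Gamma}^S$ is nondecreasing and $f(l) \preceq l$ (as $l^{2/3}\operatorname{polylog}(l) \ll l$), the argument $f(l)\cdot\mathcal{A}_{\Gamma}^S(f(l))$ is bounded above by a constant multiple of $l \cdot \mathcal{A}_{\Gamma}^S(l)$ after a harmless rescaling. More precisely: given $n$, choose $l$ maximal with $f(l) \cdot \mathcal{A}_{\Gamma}^S(f(l)) \leq n$; then $\mathcal{F}_{\Gamma}^S(n) \geq \mathcal{F}_{\Gamma}^S\big(f(l)\cdot\mathcal{A}_{\Gamma}^S(f(l))\big) > l$, and one needs $l \gtrsim n^{3/2}/\operatorname{polylog}(n)$, which follows by inverting the relation $n \approx f(l)\mathcal{A}_{\Gamma}^S(f(l)) \leq f(l)\mathcal{A}_{\Gamma}^S(l) \preceq l^{2/3}\log(l)^{3+\delta'}\mathcal{A}_{\Gamma}^S(l)$. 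Running this inversion with the target $n = l'\mathcal{A}_{\Gamma}^S(l')$ in mind, one sets $l' = f(l)$ and checks $l \gg (l')^{3/2}/\log(l')^{9/2 + \delta}$: from $l' = f(l) \approx l^{2/3}\log(l)^{3+\delta'}$ we get $l \approx (l')^{3/2}/\log(l)^{(3+\delta')\cdot 3/2}$ and $\log(l) \approx \log(l')$, so the exponent on the log is $(3+\delta')\cdot 3/2 = 9/2 + (3/2)\delta'$, and choosing $\delta' = (2/3)\delta$ gives $9/2 + \delta$ exactly.

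The main obstacle I anticipate is purely bookkeeping: carefully tracking how the polylogarithmic factors transform under the substitution $l \mapsto f(l)$ and under the $\preceq$-rescalings built into the definition of $\mathcal{F}_{\Gamma}$ (where both the input and output are allowed multiplicative constants $K$), so that one genuinely lands at the exponent $9/2 + \delta$ and not something like $9/2 + 3\delta$. This is handled by absorbing a fixed fraction of the slack: apply Theorem \ref{BThomThm} with parameter $\delta/2$ (say) in place of $\delta$, so that the error $(3/2)\delta'$ introduced in passing to the log-exponent is strictly less than $\delta$, and the remaining additive slack is soaked up by the freedom in $\succeq$. A secondary subtlety is that $l$ ranges over $\mathbb{N}$ while $f$ from Theorem \ref{BThomThm} is indexed by integers; this is dispensed with by replacing $l$ with $\lfloor l \rfloor$ throughout and noting all the functions in sight are nondecreasing, so no asymptotics are affected. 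Once these are pinned down the corollary follows directly; since $\mathcal{F}_{\Gamma} \approx \mathcal{F}_{\Gamma}^S$ up to the choice of generating set, we may suppress $S$ from the notation as in the statement.
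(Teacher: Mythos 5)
Your proposal is correct and follows essentially the same route as the paper: apply Proposition \ref{LawsRFprop} with $\mathcal{C}$ the class of all finite groups and the short laws $w_l$ from Theorem \ref{BThomThm}, then invert $l \mapsto f(l)$ to land on the exponent $9/2+\delta$. The only cosmetic difference is that the paper takes $W = \lbrace w_n \rbrace$ (and then uses $\mathcal{A}_{\Gamma,W} \leq \mathcal{A}_{\Gamma}$ together with monotonicity of $\mathcal{F}_{\Gamma}$), whereas you take $W = F_2 \setminus \lbrace 1 \rbrace$ directly; both are valid, and your bookkeeping of the polylogarithmic factors is consistent with what the paper leaves implicit.
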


\begin{proof}
Let $w_n$ be as in Theorem \ref{BThomThm} 
and set $W = \lbrace w_n \rbrace_{n \in \mathbb{N}}$. 
Apply Proposition \ref{LawsRFprop} with $\mathcal{C}$ 
the class of all finite groups. 
\end{proof}

\begin{coroll}
Let $\Gamma$ be a lawless group and let $p$ be a prime. Then: 
\begin{equation*}
\mathcal{F}_{\Gamma,p} \big( l \mathcal{A}_{\Gamma} (l) \big) 
\succeq \exp \big( \log (p) l^{1/\alpha} \big),
\end{equation*}
where $\alpha$ is as in Theorem \ref{ElkaThm} 
(so that $1/\alpha \approx 0.694$). 
\end{coroll}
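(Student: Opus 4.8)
The plan is to mirror the proof of Corollary \ref{AllgrpsCoroll} verbatim, substituting Theorem \ref{ElkaThm} for Theorem \ref{BThomThm} as the source of short laws. First I would set $W = \lbrace w_m' \rbrace_{m \in \mathbb{N}}$, where $w_m'$ is the word supplied by Theorem \ref{ElkaThm}. The key observation is that a finite $p$-group of order at most $l$ is nilpotent of class at most $\log_p(l) \leq \log_2(l)$, so $w_m'$ with $m = \lceil \log_2(l) \rceil$ is a law for every $p$-group of order at most $l$; moreover its length is $O(m^\alpha) = O(\log(l)^\alpha)$. Thus the hypothesis of Proposition \ref{LawsRFprop} is satisfied with $\mathcal{C}$ the class of finite $p$-groups and $f(l) = O(\log(l)^\alpha)$.

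Next I would feed this into Proposition \ref{LawsRFprop}, which yields
\begin{equation*}
\mathcal{F}_{\Gamma,p} ^S \big( f(l) \cdot (\mathcal{A}_{\Gamma,W} ^S \circ f)(l) \big) > l \text{.}
\end{equation*}
Since $W \subseteq F_2 \setminus \lbrace 1 \rbrace$ we have $\mathcal{A}_{\Gamma,W} ^S \leq \mathcal{A}_{\Gamma} ^S$ by Lemma \ref{supersetlawsineq}, and since $f(l) \preceq \log(l)$ is dominated by any power of $l$, the argument $f(l) \cdot \mathcal{A}_{\Gamma} ^S(f(l))$ is $\preceq \mathcal{A}_{\Gamma} ^S(\log(l)^\alpha) \log(l)^\alpha$, which up to $\approx$ is at most $l \mathcal{A}_{\Gamma} ^S(l)$ after a change of variable. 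The only real bookkeeping is to rewrite the conclusion as a lower bound on $\mathcal{F}_{\Gamma,p}$ at the argument $l \mathcal{A}_{\Gamma}(l)$: setting $N = l \mathcal{A}_{\Gamma}(l)$ and inverting, one gets that $\mathcal{F}_{\Gamma,p}(N) > l$ where $l$ is roughly $\exp$ of a power of the inverse of $N \mapsto N$ restricted through $f$, giving $\mathcal{F}_{\Gamma,p}(l \mathcal{A}_{\Gamma}(l)) \succeq \exp(\log(p) l^{1/\alpha})$.

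The one point requiring a little care is the exponent $1/\alpha$ and the factor $\log(p)$: because $w_m'$ is a law for $p$-groups of order up to $p^m$ (not up to $m$), the quantity "$l$" in Proposition \ref{LawsRFprop} should be taken as $p^m$, so that $f$ is measured against the class of $p$-groups of order at most $p^m$ and has length $O(m^\alpha)$; inverting $m^\alpha \mapsto$ argument and exponentiating base $p$ is exactly what produces $\exp(\log(p) l^{1/\alpha})$. This indexing — running Proposition \ref{LawsRFprop} with the parameter being $p^m$ rather than $m$, and keeping track of where the $\log(p)$ enters — is the main thing to get right; everything else is a direct transcription of the proof of Corollary \ref{AllgrpsCoroll}.

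\begin{proof}
Let $w_m'$ be as in Theorem \ref{ElkaThm} and set $W = \lbrace w_m' \rbrace_{m \in \mathbb{N}} \subseteq N_2(\Gamma)$ (note $W \subseteq F_2 \setminus \lbrace 1 \rbrace \subseteq N_2(\Gamma)$ as $\Gamma$ is lawless). A finite $p$-group of order at most $p^m$ is nilpotent of class at most $m$, hence $w_m'$ is a law for every such group, and $\lvert w_m' \rvert = O(m^\alpha)$. Apply Proposition \ref{LawsRFprop} with $\mathcal{C}$ the class of finite $p$-groups, with the parameter taken to be $p^m$ and $f(p^m) = \lvert w_m' \rvert = O(m^\alpha)$: we obtain
\begin{equation*}
\mathcal{F}_{\Gamma,p} ^S \big( f(p^m) \cdot (\mathcal{A}_{\Gamma,W} ^S \circ f)(p^m) \big) > p^m \text{.}
\end{equation*}
Since $\mathcal{A}_{\Gamma,W} ^S \leq \mathcal{A}_{\Gamma} ^S$ by Lemma \ref{supersetlawsineq}, and $f(p^m) = O(m^\alpha)$ is dominated by any positive power of $p^m$, a change of variables $l = p^m$ (so $m = \log(l)/\log(p)$ and $f(l) \preceq \log(l)^\alpha$) gives that the argument is $\preceq l \mathcal{A}_{\Gamma} ^S(l)$, while the right-hand side $p^m = l$ inverts to $l \succeq \exp(\log(p) \cdot (l \mathcal{A}_{\Gamma}(l))^{1/\alpha})$ up to the conventions of $\succeq$. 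Hence
\begin{equation*}
\mathcal{F}_{\Gamma,p} \big( l \mathcal{A}_{\Gamma} (l) \big) \succeq \exp \big( \log(p)\, l^{1/\alpha} \big) \text{.}
\end{equation*}
\end{proof}
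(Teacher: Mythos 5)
Your proposal is correct and follows essentially the same route as the paper: the paper's proof is just ``take $W = \{w_m'\}$ from Theorem \ref{ElkaThm} and apply Proposition \ref{LawsRFprop} with $\mathcal{C}$ the class of finite $p$-groups,'' and your unpacking (class of a $p$-group of order $p^m$ is at most $m$, so $f$ grows like $\log_p(\cdot)^{\alpha}$, then invert to get $\exp(\log(p)\,l^{1/\alpha})$, using Lemma \ref{supersetlawsineq} and monotonicity of $\mathcal{F}_{\Gamma,p}$) is exactly the intended bookkeeping. The only blemish is the garbled intermediate sentence ``$l \succeq \exp(\log(p)(l\mathcal{A}_{\Gamma}(l))^{1/\alpha})$,'' where variables are conflated, but the change of variables you describe and the final conclusion are right.
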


\begin{proof}
Let $w_m ^{\prime}$ be as in Theorem \ref{ElkaThm}. 
and set $W = \lbrace w_m ^{\prime} \rbrace_{m \in \mathbb{N}}$. 
Apply Proposition \ref{LawsRFprop} with $\mathcal{C}$ 
the class of all finite $p$-groups. 
\end{proof}

\section{Equations with coefficients} \label{CoeffSect}

Laws are generalized by \emph{mixed identities}. 
Given a group $\Gamma$, a non-trivial element $w$ of the 
free product $\Gamma \ast F_k$, 
lying in the kernel of every homomorphism 
$\Gamma \ast F_k \rightarrow \Gamma$ 
which restricts to the identity on $\Gamma$. 
A mixed identity is therefore a law for $\Gamma$ precisely when it lies in $F_k$. 
Mixed-identity-free (MIF) groups are rather rarer than lawless groups: 
any group decomposing as a non-trivial direct product, 
or with a non-trivial finite conjugacy class satisfies a mixed identity, 
as does any group possessing a non-trivial normal subgroup 
with a mixed identity. 

One may define an analogue of lawlessness growth for mixed identities. 
For $\Gamma$ a finitely generated group, 
and $w \in \Gamma \ast F_k$ non-trivial, not a mixed identity for $\Gamma$, 
the complexity of $w$ in $\Gamma$ is define exactly as for elements of $F_k$: 
it is the minimal word-length of a $k$-tuple $(g_i)$ in $\Gamma$ such that 
$w$ does not lie in the kernel of the homomorphism 
$\Gamma \ast F_k \rightarrow \Gamma$ 
restricting to the identity on $\Gamma$ and sending $x_i$ to $g_i$. 
To define the induced MIF growth function 
$\mathcal{M}_{\Gamma}$ of $\Gamma$, 
we take the maximal complexity over elements $w \in \Gamma \ast F_k$ 
of word-length at most $l$. 
As with lawlessness growth, it is easy to see that 
the equivalence class of the function $\mathcal{M}_{\Gamma}$ 
does not depend on a choice of finite generating set for $\Gamma$ 
or $\Gamma \ast F_k$. 
Moreover, since $\Gamma \ast F_k$ embeds into $F_k \ast \mathbb{Z}$ 
for every $k$, we may assume $k=1$. 
Henceforth we take $\mathbb{Z} = \langle x \rangle$ 
and fix a finite generating set $S$ for $\Gamma$.  

\begin{rmrk}
\normalfont
It might alternatively occur to one to assign length one to all 
coefficients from $\Gamma$ appearing in $w$. 
The distinction between such a length function and the 
word metric on $\Gamma \ast F_k$ coming from a finite generating set 
is roughly the same as that 
between the degree and ``height'' of a polynomial with integer coefficients. 
Importantly though, if we were to assign length one to every element of $\Gamma$, 
then for $\Gamma$ infinite, 
there would be infinitely many words of bounded length, 
so it would not be clear that the associated MIF growth function 
would take finite values, even for $\Gamma$ MIF-free. 
\end{rmrk}

We recall the following basic fact about subgroups of free groups, 
the proof of which is an easy consequence of the uniqueness 
of reduced-word representatives for elements of free products. 

\begin{lem} \label{freeprodlem}
Let $\Gamma$ and $\Delta$ be nontrivial groups, and let $e\neq h \in \Delta$. 
Then $\lbrace [g,h] : e \neq g \in \Gamma \rbrace$ 
freely generate a free subgroup of $\Gamma \ast \Delta$. 
\end{lem}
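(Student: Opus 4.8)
\textbf{Proof plan for Lemma \ref{freeprodlem}.}
The plan is to exploit the normal form for elements of the free product $\Gamma \ast \Delta$: every nontrivial element has a unique expression as an alternating product $s_1 s_2 \cdots s_m$ of nontrivial elements, with consecutive $s_i$ lying in different factors. First I would fix the distinct elements $g_1, \ldots, g_r \in \Gamma \setminus \{e\}$ and a reduced word $W = c_1 ^{\epsilon_1} \cdots c_t ^{\epsilon_t}$ in the abstract free group on letters $c_1, \ldots, c_r$ (so $\epsilon_i = \pm 1$ and no $c_i ^{\epsilon_i} c_i ^{-\epsilon_i}$ cancellation occurs), and I would substitute $c_i \mapsto [g_i, h]$. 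The goal is to show the resulting element of $\Gamma \ast \Delta$ is nontrivial; the whole content is tracking cancellation at the junctions between consecutive commutators.

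The key computation: writing $[g,h] = g h g^{-1} h^{-1}$ and $[g,h]^{-1} = h g h^{-1} g^{-1}$, each $[g_i, h]^{\pm 1}$ is an alternating product of length $4$ beginning and ending with elements whose factor-membership I would tabulate ($[g,h]$ starts in $\Gamma$ and ends in $\Delta$; $[g,h]^{-1}$ starts in $\Delta$ and ends in $\Gamma$). When I concatenate $[g_i, h]^{\epsilon_i} [g_{i+1}, h]^{\epsilon_{i+1}}$, the junction pairs a trailing $h^{\mp 1}$ or $g_i ^{\mp 1}$ with a leading $g_{i+1}$ or $h^{\pm 1}$. In every case I would check that no complete cancellation across the junction is possible: either the two adjacent syllables lie in different factors (so the reduced form is already alternating with nothing to cancel), or they lie in the same factor but their product is nontrivial — and here is where the hypotheses enter. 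When $\epsilon_i = \epsilon_{i+1}$ the adjacent syllables are $h^{-1}$ (end of $[g_i,h]$) and $g_{i+1}$ (start of $[g_{i+1},h]$), different factors, fine. When $\epsilon_i = 1, \epsilon_{i+1} = -1$ we meet $h^{-1}$ then $h$: these multiply to $e$ and collapse, exposing $g_i ^{-1}$ against $g_{i+1}$, which is nontrivial precisely because $g_i \neq g_{i+1}$ — and this is exactly the situation forbidden by $W$ being reduced ($c_i$ and $c_{i+1}^{-1}$ with $c_i \ne c_{i+1}$, i.e.\ distinct generators, so this is allowed in $W$ but the $g$'s are genuinely different). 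When $\epsilon_i = -1, \epsilon_{i+1} = 1$ we meet $g_i ^{-1}$ then $g_{i+1}$; if $g_i = g_{i+1}$ this collapses and exposes $h^{-1}$ against $h$ which would further collapse — but $g_i = g_{i+1}$ with $\epsilon_i = -1, \epsilon_{i+1} = +1$ is exactly $c_i ^{-1} c_i$, forbidden by reducedness of $W$. So after all collapsing the word still contains, e.g., the first syllable of the first commutator and the last syllable of the last, and in particular is nonempty; I would phrase this as an induction on $t$ showing the product, after reduction, is a nonempty alternating word with a prescribed first syllable.

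The main obstacle — really the only place requiring care — is bookkeeping the chain reaction of cancellations: a single collapse at a junction can expose a further collapsible pair, which can expose another, so one cannot treat junctions in isolation. I would handle this by proving, by induction on the number of commutator factors, that the reduced form of $[g_{i_1},h]^{\epsilon_1} \cdots [g_{i_s},h]^{\epsilon_s}$ (for the relevant reduced prefix of $W$) is a nonempty alternating word whose first syllable is determined by $\epsilon_1$ (namely $g_{i_1} \in \Gamma$ if $\epsilon_1 = 1$, or $h^{-1} \in \Delta$ if $\epsilon_1 = -1$), carrying along enough information about the last syllable to control the next junction. Since $W$ reduced forbids exactly the two patterns $c_i c_i ^{-1}$ and $c_i ^{-1} c_i$, and distinctness of the $g_i$'s rules out the only other way a junction could fully collapse, the induction goes through and the image of $W$ is nontrivial; as $W$ was an arbitrary nontrivial reduced word, the $[g,h]$ freely generate. $\qed$
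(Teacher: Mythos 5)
Your proposal is correct and is precisely the argument the paper has in mind: the paper gives no details, describing the lemma as an easy consequence of the uniqueness of reduced-word representatives in free products, and your syllable-by-syllable cancellation analysis (with reducedness of $W$ plus distinctness of the $g_i$ blocking any full collapse at a junction) is exactly that. The only slip is cosmetic: since $[g,h]^{-1}=hgh^{-1}g^{-1}$, the leading syllable when $\epsilon_1=-1$ is $h$, not $h^{-1}$.
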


\begin{proof}[Proof of Theorem \ref{MIFLBThm}]
Let $g_1 , \ldots , g_m$ be an enumeration of the nontrivial elements 
of $B_S (l)$, so that for some $C>0$ (independent of $l$) we have $m \leq \exp(Cl)$. 
For $1 \leq i \leq m$ let $w_i = [g_i , x]$. 
By Lemma \ref{freeprodlem} the $w_i$ freely generate a free subgroup $F$ of 
$\Gamma \ast \mathbb{Z}$ of rank $m$. 
Applying the construction of Proposition \ref{KozThoProp} to the $w_i$, 
we obtain a nontrivial word $w$ in the $w_i$ of 
reduced length at most $16 m^2$, with 
the property that, whenever $\pi:F \rightarrow \Gamma$ 
is a homomorphism whose kernel contains some $w_i$, 
$w \in \ker (\pi)$ also. 

Since every $y_i$ has word-length at most $2(l+1)$ in $S \cup \lbrace x \rbrace$, 
$w$ has word-length at most $32 (l+1) m^2 \preceq \exp(l)$ 
when viewed as an element of $\Gamma \ast \mathbb{Z}$. 
Meanwhile, $w$ has complexity at least $l+1$ in $\Gamma \ast \mathbb{Z}$. 
For every homomorphism $\Gamma \ast \mathbb{Z} \rightarrow \Gamma$ 
resticting to the identity on $\Gamma$ restricts to a homomorphism 
$F \rightarrow \Gamma$. 
If such a homomorphism sends $x$ to some $g_i$, 
then $w_i$ lies in its kernel, and by Proposition \ref{KozThoProp}, 
so does $w$. 
\end{proof}

Now we turn to the proof of Theorem \ref{MIFfreeThm}. 
Henceforth $\Gamma$ is a finite-rank nonabelian free group with free basis $S$.

\begin{propn} \label{MIFfreePropn}
Let: 
\begin{equation} \label{freeprodcycredform}
w (x) = a_1 x^{k_1} \cdots a_l x ^{k_l} \in \Gamma \ast \mathbb{Z}
\end{equation}
for some $e \neq a_i \in \Gamma$ and $k_i \in \mathbb{Z}$ 
with $k_i \neq 0$ for $i \leq l-1$.  
Suppose $u \in \Gamma$ is 
not a proper power in $\Gamma$, 
and that $[u,a_i] \neq e$ for all $i$. 
Then for all $m \in \mathbb{N}$ sufficiently large that: 
\begin{equation}
\lvert u^m \rvert_S \geq \lvert u^2 \rvert_S l + \sum_{i=1} ^l \lvert a_i \rvert_S,
\end{equation} 
$w(u^m)$ does not commute with $u$ 
(and in particular $w(u^m)\neq e$). 
\end{propn}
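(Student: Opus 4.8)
The plan is to prove the stronger assertion that $w(u^m)\notin C_{\Gamma}(u)$, which gives both conclusions at once since $e\in C_{\Gamma}(u)$. First I would pin down the centralizer: as $\Gamma$ is free, Lemma~\ref{cycClem} gives $C_{\Gamma}(u)=\langle v\rangle$ cyclic; writing $u=v^{k}$ and recalling that $u$ is not a proper power (so $u\neq e$ and $k=\pm1$) yields $C_{\Gamma}(u)=\langle u\rangle$. Hence it suffices to show $w(u^m)=a_1u^{mk_1}a_2u^{mk_2}\cdots a_lu^{mk_l}\notin\langle u\rangle$.

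To analyse this I would work in the Cayley tree $\mathcal{T}$ of $\Gamma$ with respect to $S$, where the reduced word for $g$ is the geodesic $[e,g]$. Write $u=z\bar u z^{-1}$ with $\bar u$ cyclically reduced, put $Z=\lvert z\rvert_S$ and $\ell=\lvert\bar u\rvert_S\geq1$. Since $u^{j}=z\bar u^{j}z^{-1}$ is reduced for $j\neq0$, we get $\lvert u^{j}\rvert_S=2Z+\lvert j\rvert\ell$, so the hypothesis rearranges to $m\ell\geq2(l-1)Z+2l\ell+\sum_i\lvert a_i\rvert_S$; in particular $m\ell>2Z+\lvert a_1\rvert_S+\lvert a_2\rvert_S$ once $l\geq2$ (the case $l=1$ is immediate, as then $w(u^m)$ is $a_1$ or $a_1u^{mk_1}$, and neither lies in $\langle u\rangle$ because $a_1\notin\langle u\rangle$). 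The axis of $u$ is the line $L$ through the vertices $\{z\bar u^{n}:n\in\mathbb{Z}\}$, and the key structural fact is that for $g\in\Gamma$ the line $gL$ meets $L$ precisely when $g\in\langle u\rangle$: indeed $gz\bar u^{n}=z\bar u^{n'}$ forces $g=u^{n-n'}$, and two distinct lines in a tree that share no vertex are disjoint. It follows that if $w(u^m)=u^{s}\in\langle u\rangle$ then $[e,w(u^m)]=[e,z]\cup[z,z\bar u^{s}]\cup[z\bar u^{s},z\bar u^{s}z^{-1}]$, so all but at most $2Z$ of its edges lie on $L$.

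For $l\geq2$ I would derive a contradiction by exhibiting more than $2Z$ edges of $[e,w(u^m)]$ lying off $L$. Build the (typically non-geodesic) path $P$ from $e$ to $w(u^m)$ by concatenating geodesics spelling $a_1$, then $u^{mk_1}=z\bar u^{mk_1}z^{-1}$, then $a_2$, then $u^{mk_2}$, and so on. Inside the first $u^{mk_1}$-block, the sub-path $\tau$ spelling $\bar u^{mk_1}$ (of length $m\lvert k_1\rvert\ell$) lies entirely on the line $\Lambda:=a_1L$; since $a_1\notin\langle u\rangle$ we have $\Lambda\cap L=\emptyset$, so every edge of $\tau$ is off $L$. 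The heart of the matter is to show that, on reducing $P$ to $[e,w(u^m)]$, the path $\tau$ loses at most $\lvert a_1\rvert_S+\lvert a_2\rvert_S$ edges: the two copies of $z^{\pm1}$ flanking $\bar u^{mk_1}$ serve as buffers, so cancellation can reach $\tau$ from the left only through $a_1$ and from the right only through $a_2$ and its flanking buffers, while no later block can erode $\tau$ because the middle stretches of the subsequent $u^{mk_i}$-blocks sit on lines disjoint from $\Lambda$ --- unless an intervening factor $z^{\pm1}a_j z^{\pm1}\cdots$ cancels entirely, which the bound $\sum_i\lvert a_i\rvert_S\leq m\ell-2(l-1)Z-2l\ell$ forbids, since it would force some $a_j$ to have length comparable to $m\ell$. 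Granting this, at least $m\lvert k_1\rvert\ell-\lvert a_1\rvert_S-\lvert a_2\rvert_S>2Z$ edges of $\tau$ survive in $[e,w(u^m)]$, all off $L$, contradicting the previous paragraph; hence $w(u^m)\notin\langle u\rangle$.

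The main obstacle is exactly the cancellation bookkeeping of the last paragraph: one must rule out every way reduction could eat into $\tau$, including the ``indirect'' route in which an intervening segment cancels completely and a later power of $\bar u$ then reaches $\tau$, and it is here that the precise shape of the hypothesis is used --- the term $\lvert u^{2}\rvert_S\cdot l$ rather than $\lvert u\rvert_S\cdot l$ being exactly what pays for the conjugator lengths that make the $z^{\pm1}$-buffers effective.
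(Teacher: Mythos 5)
Your reduction to showing $w(u^m)\notin\langle u\rangle$ is fine, but the argument then rests on a ``key structural fact'' that is false: it is not true that $gL$ meets $L$ only when $g\in\langle u\rangle$. Your justification only compares vertices of the special form $gz\bar{u}^{n}$ and $z\bar{u}^{n'}$, ignoring the intermediate vertices of the two lines. Translates of the axis of $u$ by elements outside $\langle u\rangle$ can meet $L$, and can even share long segments: in $F(a,b)$ with $u=ab$ (so $z=e$), the vertex $a$ lies on both $L$ and $aL$ although $a\notin\langle ab\rangle$, and $g=ababa$ gives an overlap of length $4$. What is true is only that a sufficiently long overlap (roughly $2\lvert\bar{u}\rvert$) of two distinct translates forces the two $\bar{u}$-periodic labellings into phase, and proving this is exactly the periodicity analysis in the paper's proof (an overlap of length $\geq\lvert u^{2}\rvert_S$ is either in phase, or yields $z=vw=wv$, contradicting that $u$ is not a proper power, or yields $z$ conjugate to $z^{-1}$, impossible in a free group). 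So the very dichotomy you need -- ``edges of $\tau$ lie on $\Lambda=a_1L$, hence off $L$'' -- is unavailable without first supplying that analysis, which your proposal does not contain.

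The second, and deeper, gap is the cancellation bookkeeping you yourself flag as ``the main obstacle'': it is not merely unfinished, its intended justification attacks the wrong enemy. Erosion of $\tau$ from the right is not controlled by $\lvert a_2\rvert_S$ alone (it can additionally pass through conjugator flanks of length about $2\lvert z\rvert_S$, which already makes the stated constant $\lvert u^{2}\rvert_S\, l$ delicate), and, more importantly, the dangerous scenario is not that some single $a_j$ is long. It is that an intermediate subword $a_2u^{mk_2}\cdots a_i$ reduces to something beginning with a huge power of $u$ -- equivalently, that such a subword commutes with $u$ -- in which case a later block can sweep away the whole of $\tau$ with every $a_j$ short. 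Excluding this is essentially the proposition itself for smaller $l$, so your sketch is circular at its crux. This is precisely why the paper argues by induction on $l$: assuming $w(u^m)$ commutes with $u$ gives a closed loop $u^{k_0}a_1u^{k_1m}\cdots a_lu^{k_lm}=e$ in the tree, the hypothesis forces the segment $[g_1,h_1]$ to overlap some $[g_i,h_i]$ in length $\geq\lvert u^{2}\rvert_S$, and the in-phase case of the periodicity analysis produces the shorter commuting subword $a_2u^{k_2m}\cdots a_i$, contradicting the inductive hypothesis. To repair your argument you would need both the overlap/phase lemma and an inductive (or equivalent) mechanism for these collapsing intermediate subwords -- at which point you have reconstructed the paper's proof.
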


\begin{proof}
We proceed by induction on $l$. 
For $l=1$ the claim is clear for any $m \geq 1$: 
$a_1 u^{k_1 m}$ commutes with $u$ iff $a_1$ does. 
Let $l \geq 2$ and suppose the claim holds for smaller $l$. 
If $w(u^m)$ commutes with $u$, then they are powers of a common word. 
Since $u$ is not a proper power, there exists $k_0 \in \mathbb{Z}$ such that 
$u^{k_0} a_1 u^{k_1 m} \cdots a_l u ^{k_l m} = e$. 

Consider the Cayley graph $\mathcal{C}$ of $\Gamma$ 
with respect to the free basis $S$. 
Since $\mathcal{C}$ is a tree there is, for every $g,h \in \Gamma$, 
a unique reduced path $[g,h]$ from $g$ to $h$ in $\mathcal{C}$ 
(of length equal to $\lvert g^{-1} h \rvert_S$). 
Writing $g_0=e$, $h_0=u^{k_0}$ and $g_i = h_{i-1} a_i$, $h_i = g_i u^{k_i m}$ 
for $i \geq 1$ (so that $h_l = e$), 
the union of all the paths $[g_j,h_j]$ and $[h_j,g_{j+1}]$ 
is a closed loop in $\mathcal{C}$ and 
(using again the fact that $\mathcal{C}$ is a tree), 
$[g_1,h_1]$ is contained in the union of the other intervals. 
Each $[h_{j-1},g_j]$ has length $\lvert a_j \rvert_S$, 
so for $m$ as in the statement, 
there exists $i \neq 1$ such that $[g_1,h_1]$ overlaps with $[g_i,h_i]$ 
in an interval of length at least $\lvert u^2 \rvert_S$. 

We may write $u$ uniquely as $y^{-1} zy$, 
for $y,z\in\Gamma$ reduced words in $S$, 
with $z$ cyclically reduced and 
$\lvert u \rvert_S = 2 \lvert y \rvert_S + \lvert z \rvert_S$. 
There exist $M , N \in \mathbb{Z}$ with 
$g_1 ^{-1} h_1 = y^{-1} z^M y$ and $g_i ^{-1} h_i = y^{-1} z^N y$ 
. 
Further, 
$I_1 = [g_1 y^{-1},h_1 y^{-1}] = [g_1 y^{-1},g_1 y^{-1} z^M]$ 
overlaps with $I_2 = [g_i y^{-1},h_i y^{-1}] = [g_i y^{-1},g_i y^{-1} z^N]$ 
in an interval of length at least 
$\lvert u^2 \rvert_S-2\lvert y \rvert_S \geq 2\lvert z \rvert_S$. 
Let $p$ and $q$ be the shortest prefixes of $z^M$ and $z^N$, 
respectively, such that $g_1 y^{-1} p = g_i y^{-1} q$ 
(that is, the starting-points of common subinterval of $I_1$ and $I_2$). 

We claim that 
$\lvert p \rvert_S \equiv \lvert q \rvert_S \mod \lvert z \rvert_S$, 
so that the initial common subinverval of $I_1$ and $I_2$ 
of length $\lvert z \rvert_S$ contains a point 
$g_1 y^{-1} z^{k_1 '} = g_i y^{-1} z^{k_2 '}$ 
for some $k_1 ' , k_2 ' \in \mathbb{Z}$, so that: 
\begin{center}
$u^{k_1 ' - k_2 ' - k_1 m} = g_1 ^{-1} g_i = a_2 u^{k_2 m} \cdots u^{k_{i-1} m} a_i$
\end{center}
commutes with $u$, contradicting the inductive hypothesis. 

Suppose the claim fails. 
First suppose $M$ and $N$ have the same sign. 
Comparing the initial common subintervals of $I_1$ and $I_2$ 
of length $2 \lvert z \rvert_S$, 
we see that there exist $v ,w \in \Gamma$ nontrivial reduced words with 
$z= vw = wv$, and $\lvert z \rvert_S = \lvert v \rvert_S + \lvert w \rvert_S$ 
(specifically, $\lvert v \rvert_S \equiv 
\lvert p \rvert_S - \lvert q \rvert_S \mod \lvert z \rvert_S$). 
Since $v$ and $w$ commute, they are powers of a common word, 
contradicting the fact that $u$ (and hence $z$) is not a proper power. 
Similarly if $M$ and $N$ have opposite signs, 
we obtain $z=vw$ and $z^{-1} = wv = wzw^{-1}$, 
but no nontrivial element of a free group is conjugate to its inverse. 
We therefore have the desired claim. 
\end{proof}

\begin{proof}[Proof of Theorem \ref{MIFfreeThm}]
Let $e \neq w \in \Gamma \ast \mathbb{Z}$ with $\lvert w \rvert \leq n$. 
Conjugating, we may assume that $w (x)$ 
has the form given in (\ref{freeprodcycredform}). 
Moreover, 
\begin{equation*}
\sum_{i=1} ^l \lvert a_i \rvert \leq n
\end{equation*}
so that by Proposition \ref{MIFfreePropn} 
$w$ has complexity at most $2 \lvert u \rvert_S n + \lvert u \rvert_S + n$ 
for any $u \in \Gamma$ satisfying the conditions of 
Proposition \ref{MIFfreePropn}. 
It therefore suffices to find such $u$ satisfying $\lvert u \rvert \ll \log n$. 

There exists $C>0$ such that $\lvert B_S (k) \rvert \geq \exp (C k)$ 
for all $k \in \mathbb{N}$. 
Since centralizers in free groups are cyclic, 
the union of the centralizers of the $a_i$ cover at most $(2k+1)l \leq (2k+1)n$ 
points in $B_S (k)$. 
Hence there exists $C' > 0$ such that for all $k \geq C' \log (n)$, 
$B_S(k)$ contains an element $u$ such that $[u,a_i] \neq e$ for all $i$. 
Taking $u$ to be of minimal length among such elements, 
we may assume $u$ is not a proper power. 
\end{proof}

\begin{rmrk}
Unlike the class of lawless groups, the class of MIF groups is not closed 
under taking overgroups. 
For instance $\Gamma \times C_2$ is non-MIF, for any group $\Gamma$. 
There seems to be no straightforward relationship between 
the MIF growth of a group $\Delta$ and that of a finitely generated 
overgroup $\Gamma$ (in the spirit of Lemma \ref{subgrpcomplem}), 
even if $\Gamma$ is itself MIF. 
\end{rmrk}

\section{Open questions} \label{OpenSect}

There are many interesting questions 
one can ask about the spectrum of possible lawlessness 
growth functions. 
For instance one may wonder whether there is a universal 
\emph{upper} bound on $\mathcal{A}_{\Gamma}$ for lawless groups. 

\begin{qu} \label{BigLGQu}
Does there exist, for all non-decreasing functions 
$f : \mathbb{N} \rightarrow \mathbb{N}$, 
a finitely generated lawless group $\Gamma = \Gamma(f)$ 
satisfying $\mathcal{A}_{\Gamma} \npreceq f$? 
Does there exist such a group satisfying 
$\mathcal{A}_{\Gamma} \succeq f$?
\end{qu}

Our main source of strong lower bounds on $\mathcal{A}_{\Gamma}$ 
comes from \emph{upper} bounds on torsion growth for infinite 
finitely generated $p$-groups. 
Under this approach, 
there are viable methods for constructing finitely generated lawless $p$-groups 
which are unrelated to Golod-Shafarevich theory. 
For instance, there are many examples of branch $p$-groups 
beyond Grigorchuk's group, 
and all (weakly) branch groups are lawless 
(by an argument similar to our proof of Theorem \ref{Grigmainthm}). 

\begin{qu} \label{BrachTGQuestion}
How slow can the torsion growth of a finitely generated 
(weakly) branch $p$-group be? Can it be sublinear? 
\end{qu}

For example, by Theorem 7.8 of \cite{BartSuni}, 
Grigorchuk's group has torsion growth at least linear. 
In a completely different direction, 
a result of Dru\c{t}u-Sapir shows that any non-virtually-cyclic 
group which admits an asymptotic cone with a cut-point is lawless. 
By \cite{OlOsSaKaKl}[Theorem 1.12] 
such groups include some finitely generated torsion groups 
(indeed $p$-groups, as is made clear in the proof). 
It is unclear to us at present how effective the construction 
in \cite{OlOsSaKaKl} is able to be made, and whether it could give rise to 
groups of fast lawlessness growth. 

Our proof of the upper bound in Theorem \ref{Grigmainthm}, 
and the possibility of improving upon it, 
naturally connects to the following question 
about finite groups. 

\begin{qu}
Let $W_n$ be the $(n+1)$-fold iterated wreath product of $C_2$, 
as in Section \ref{BranchSect}. 
What is the length of the shortest law in $F_k$ for $W_n$? 
\end{qu}

As far as the author is aware, there is no shorter law known for 
$W_n$ than the obvious power-word $x^{2^{n+1}}$. 

Turning to MIF growth, 
we may ask for examples of groups for which $\mathcal{M}_{\Gamma}$ grows slowly. 
Since MIF is a difficult proerty to satisfy, 
exhibiting such groups may be rather challenging. 
For instance, it is natural (if rather ambitious) 
to ask whether Theorem \ref{MIFLBThm} is sharp. 

\begin{qu}
Does there exist $\Gamma$ with $\mathcal{M}_{\Gamma} (n) \ll \log (n)$? 
\end{qu}

Gromov hyperbolic groups form an important class of MIF groups, 
properly containing the nonabelian finite-rank free groups. 
We may ask for a generalization of the bound 
from Theorem \ref{MIFfreeThm} to this class, 
and for insight into the features of the geometry of groups on which the 
bound depends. 

\begin{prob}
Let $\Gamma$ be a torsion-free Gromov hyperbolic group. 
Give an upper bound on $\mathcal{M}_{\Gamma}$. 
Does there exist, for each $\delta > 0$, 
an increasing function $f_{\delta} : \mathbb{N} \rightarrow \mathbb{N}$ 
such that, if $\Gamma$ admits a $\delta$-hyperbolic Cayley graph, 
then $\mathcal{M}_{\Gamma} \preceq f_{\delta}$? 
\end{prob}

Finally, since words in $F_k$ of a given length are a small subset 
of the elements of $\Gamma \ast F_k$, 
it is clear that for any MIF group $\Gamma$, 
$\mathcal{A}_{\Gamma} \preceq \mathcal{M}_{\Gamma}$, 
and in general one would expect 
it would be a remarkable achievement to identify a group in which 
elements of $\Gamma \ast F_k$ of essentially maximal comlexity 
for their length already occur in $F_k$. 

\begin{qu} \label{LawMIFequivQu}
Does there exist a finitely generated MIF group $\Gamma$ with 
$\mathcal{A}_{\Gamma} \approx \mathcal{M}_{\Gamma}$? 
\end{qu}

By Theorems \ref{bddLGthm} and \ref{MIFLBThm} a group providing a positive answer to 
Question \ref{LawMIFequivQu} would needs must have no $F_2$-subgroups. 
In a forthcoming work, we shall give a finite group analogue of 
a positive answer to Question \ref{LawMIFequivQu}: 
a sequence $(G_n)$ of finite groups having no common law, 
such that for each $n$, 
the length of the shortest law for $G_n$ is comparable to the length of the 
shortest mixed identity. 

Following circulation of a preliminary version of the present article, 
J.M. Petschick has shared with the author a construction 
of a group answering the final part of Question \ref{BrachTGQuestion} 
in the affirmative (as yet unpublished). 
His result therefore also strengthens the conclusion 
of our Theorem \ref{GSmainthm}. 

\subsection*{Acknowledgements}

I am grateful to John Wilson for helpful discussions concerning 
Theorem \ref{bddLGthm}; 
to Henry Wilton for providing a useful reference, 
and to Matt Brin, Francesco Fournier-Facio and Alejandra Garrido for enlightening conversations. 
This work was partially supported by ERC grant no. 648329 ``GRANT''.


\begin{thebibliography}{99}

\bibitem{BartSuni} L. Bartholdi, Z. \u{S}uni\'{k}, 
{\it On the word and period growth of some groups of tree automorphisms,}
Comm. Algebra 29 (2001), Issue 11, 4923--4964

\bibitem{BouRab} K. Bou-Rabee, 
{\it Quantifying residual finiteness,}
J. Algebra 323 (2010), 729--737

\bibitem{BradThom} H. Bradford, A. Thom, 
{\it Short laws for finite groups and residual finiteness growth,}
Trans. Amer. Math. Soc. (2019) 6447--6462

\bibitem{BraThoLie} H. Bradford, A. Thom, 
{\it Short laws for finite groups of Lie type, }
arXiv:1811.05401 [math.GR]

\bibitem{BrinSqui} M.G. Brin and C.C. Squier, 
{\it Groups of piecewise linear homeomorphisms of the real line,}
Invent. Math. 79 (1985) 485--498

\bibitem{CornMann} Y. Cornulier, A. Mann, 
{\it Some residually finite groups satisfying laws,}
In: Geometric Group Theory. Birkh\"{a}user Basel, 2007, 45--50

\bibitem{Elka} A. Elkasapy, 
{\it A new construction for the shortest non-trivial element in the lower central series,}
arXiv:1610.09725 [math.GR]

\bibitem{Ersh} M. Ershov, 
{\it Golod-Shafarevich groups: a survey,}
Int. J. Alg. Comp. 22.05 (2012): 1230001

\bibitem{GrigArt} R. Grigorchuk, 
{\it Degrees of growth of finitely generated groups and 
the theory of invariant means,} 
Izv. Akad. Nauk SSSR Ser. Mat. 48(5) (1984) 939--985

\bibitem{GrigChap} R. Grigorchuk, 
{\it Just infinite branch groups,}
In: New horizons in pro-p groups (Markus P. F.
du Sautoy Dan Segal and Aner Shalev, eds.), 
Birkh\"{a}user Boston, Boston, MA, 2000, pp. 121--179

\bibitem{KozTho} G. Kozma and A. Thom, 
{\it Divisibility and laws in finite simple groups,}
Mathematische Annalen 364.1-2 (2016) 79--95

\bibitem{LubWei} A. Lubotzky and B. Weiss, 
{\it Groups and Expanders,} 
Expanding Graphs (Princeton, NJ, 1992), 10, 95--109

\bibitem{OlOsSaKaKl} A.Y. Ol'shanskii, D.V. Osin and M.V. Sapir, 
{\it Lacunary hyperbolic groups, }
Geometry and Topology 13 (2009) 2051--2140

\bibitem{Zel} E. Zelmanov, 
{\it On groups satisfying the Golod-Shafarevich condition,} 
New horizons in pro-$p$ groups, 
223--232, Progr. Math., 184, Birkh\"{a}user Boston, Boston, MA, 2000

\end{thebibliography}
\end{document}